\numberwithin{equation}{section}		
\newcommand{\R}{\mathbb{R}}
\newcommand{\Z}{\mathbb{Z}}
\newcommand{\N}{\mathbb{N}}
\newcommand{\Q}{\mathbb{Q}}
\newcommand{\eps}{\varepsilon}
\newcommand{\Ph}{U}
\newcommand{\E}{\ensuremath{\mathbb{E}}}
\newcommand{\D}{D}
\newcommand{\TD}{\R_+}
\newcommand{\Meas}{\ensuremath{\mathcal{M}}}
\newcommand{\dd}{\ensuremath{\;d}}
\newcommand{\Dx}{{\ensuremath{\Delta}}}
\newcommand{\Ypair}[2]{\bigl\langle #1, #2\bigr\rangle}
\newcommand{\NumericalEvolution}[1]{\ensuremath{\mathcal{S}^{#1}}}
\newcommand{\Borel}{\mathcal{B}}
\newcommand{\Indicator}[1]{\mathbf{1}_{#1}}
\newcommand{\cm}{\ensuremath{\bm{\nu}}}
\newcommand{\Prob}{\ensuremath{\mathcal{P}}}
\newcommand{\weakto}{\ensuremath{\rightharpoonup}}
\newcommand{\weaksto}{\ensuremath{\overset{*}{\weakto}}}
\renewcommand{\phi}{\varphi}
\newcommand{\loc}{{\ensuremath{\text{loc}}}}
\newcommand{\Corrmeas}{\EuScript{L}}
\newcommand{\Caratheodory}{\EuScript{H}}
\newcommand{\Lgfuncs}{\mathcal{C}}
\newcommand{\norm}[1]{\left\|#1\right\|}
\renewcommand{\vec}[1]{\ensuremath{\mathbf{#1}}}
\newcommand{\bi}{{\vec{i}}}
\newcommand{\bj}{{\vec{j}}}
\newcommand{\be}{{\vec{e}}}
\newcommand{\pdpd}[2]{\ensuremath{\frac{\partial #1}{\partial #2}}}
\newcommand{\ddt}[1]{\ensuremath{\frac{\text{d}}{\text{d}t}#1}}
\newcommand{\hf}{{\ensuremath{\nicefrac{1}{2}}}}
\newcommand{\iphf}{{i+\hf}}
\newcommand{\imhf}{{i-\hf}}
\newcommand{\jphf}{{j+\hf}}
\newcommand{\jmhf}{{j-\hf}}
\newcommand{\pushforward}[2]{#1\##2}
\renewcommand{\leq}{\leqslant}
\renewcommand{\geq}{\geqslant}
\DeclareMathOperator{\Lip}{Lip}
\newcommand{\AvgS}[1]{\ensuremath{#1}}
\newcommand{\Avg}[2]{\ensuremath{\AvgS{#1}_{#2}}}
\newcommand{\AvgEvolved}[4]{\ensuremath{\AvgS{#1}_{#4}^{#2,#3}}}
\newcommand{\GridIndexSetD}{\ensuremath{\Z^d}}
\newcommand{\GridIndexSetDK}[1]{\ensuremath{(\Z^d)^{#1}}}
\newcommand{\InitialData}[1]{\ensuremath{\bar{#1}}}
\DeclareMathOperator{\supp}{supp}
\DeclareMathOperator*{\esssup}{ess\,sup}
\newcommand{\MeshResolution}[1]{#1 $\times$ #1}
\def\Xint#1{\mathchoice
{\XXint\displaystyle\textstyle{#1}} 
{\XXint\textstyle\scriptstyle{#1}} 
{\XXint\scriptstyle\scriptscriptstyle{#1}} 
{\XXint\scriptscriptstyle\scriptscriptstyle{#1}} 
\!\int}
\def\XXint#1#2#3{{\setbox0=\hbox{$#1{#2#3}{\int}$ }
\vcenter{\hbox{$#2#3$ }}\kern-.56\wd0}}
\def\intavg{\Xint-}
\newtheorem{theorem}{Theorem}
\newtheorem{proposition}[theorem]{Proposition}
\newtheorem{lemma}[theorem]{Lemma}
\newtheorem{remark}[theorem]{Remark}
\newtheorem{notation}[theorem]{Notation}
\numberwithin{theorem}{section}
\theoremstyle{definition}
\newtheorem{definition}[theorem]{Definition}
\newtheorem{example}[theorem]{Example}
\newtheorem{Algorithm}{Algorithm}
\newlength\figureheight
\newlength\figurewidth
\newcommand{\InputImage}[3]{}
	\renewcommand{\InputImage}[3]{%
		\setlength\figureheight{#2}%
		\setlength\figurewidth{#1}%
		\input{img_tikz/#3.tikz}%
	}%
  \renewcommand{\InputImage}[3]{%
		\includegraphics[width=#1]{img_downscaled/#3_notitle}%
	}%
\pgfplotsset{every tick label/.append style={font=\tiny}}
\pgfplotsset{
	tick label style = {font = {\fontsize{6 pt}{12 pt}\selectfont}},
	label style = {font = {\fontsize{6 pt}{12 pt}\selectfont}},
	legend style = {font = {\fontsize{6 pt}{12 pt}\selectfont}},  
}
\title[Numerical approximation of statistical solutions]{Statistical solutions of hyperbolic systems of conservation laws: numerical approximation}
\author{U. S. Fjordholm}
\address[U. S. Fjordholm]{Department of Mathematics, University of Oslo, Postboks 1053 Blindern, 0316 Oslo, Norway}
\email{ulriksf@math.uio.no}
\author{K. Lye \and S. Mishra}
\address[K. Lye, S. Mishra]{Seminar for Applied Mathematics, ETH Z\"urich, R\"amistrasse 101, 8092 Z\"urich, Switzerland}
\email[K. Lye]{kjetil.lye@sam.math.ethz.ch}
\email[S. Mishra]{siddhartha.mishra@sam.math.ethz.ch}
\author{F. Weber}
\address[F. Weber]{Department of Mathematical Sciences, Carnegie Mellon University, 5000 Forbes Avenue, Pittsburgh, PA 15213, USA}
\email{franzisw@andrew.cmu.edu}	
\date{\today}
\begin{document}
\maketitle


\begin{abstract}
Statistical solutions are time-parameterized probability measures on spaces of integrable functions, that have been proposed recently as a framework for global solutions and uncertainty quantification for multi-dimensional hyperbolic system of conservation laws. By combining high-resolution finite volume methods with a Monte Carlo sampling procedure, we present a numerical algorithm to approximate statistical solutions. Under verifiable assumptions on the finite volume method, we prove that the approximations, generated by the proposed algorithm, converge in an appropriate topology to a statistical solution. Numerical experiments illustrating the convergence theory and revealing interesting properties of statistical solutions, are also presented.
\end{abstract}

\section{Introduction}
Systems of conservation laws are a large class of nonlinear partial differential equations of the generic form
\begin{subequations}\label{eq:cauchy}
\begin{align}
\partial_t u + \nabla_x \cdot f(u) &= 0  \label{eq:cl} \\
u(x,0) &= \bar{u}(x).
\end{align}
\end{subequations}
Here, the unknown $u = u(x,t) : D\times\R_+\to U$ is the vector of \emph{conserved variables} and $f = (f^1, \dots, f^d) : \R^N \to \R^{N\times d}$ is the \emph{flux function}. Here, we denote $\R_+ := [0,\infty)$ and $U:= \R^N$, and we let the physical domain $D \subset \R^d$ be some open, connected set. The system \eqref{eq:cl} is \emph{hyperbolic} if the flux Jacobian $\partial_u(f \cdot n)$ has real eigenvalues for all $n \in \R^d$ with $|n| = 1$.

Many important models in physics and engineering are described by hyperbolic systems of conservation laws. Examples include the compressible Euler equations of gas dynamics, the shallow water equations of oceanography, the Magneto-Hydro-Dynamics (MHD) equations of plasma physics, and the equations of nonlinear elastodynamics \cite{Dafermos}.  

\subsection{Entropy Solutions.} It is well known that even if the initial data $\overline{u}$ in \eqref{eq:cauchy} is smooth, solutions of \eqref{eq:cauchy} develop discontinuities, such as \emph{shock waves} and \emph{contact discontinuities}, in finite time. Therefore, solutions to
\eqref{eq:cauchy} are sought in the sense of distributions: A function $u\in L^\infty(\R^d\times\R_+,\R^N)$ is an \emph{weak solution} of \eqref{eq:cauchy} if it satisfies 
\begin{equation}\label{eq:wsoln}
\int_{\R_+}\int_{\R^d} \partial_t \phi(x,t) u(x,t) + \nabla_x\phi(x,t) \cdot f(u(x,t))\ dxdt + \int_{\R^d}\phi(x,0) \overline{u}(x) \ dx = 0
\end{equation}
for all test functions $\phi\in C^1_c(\R^d\times\R_+)$.

As weak solutions are not unique \cite{Dafermos}, it is necessary to augment them with additional admissibility criteria or \emph{entropy conditions} to recover uniqueness. These entropy conditions are based on the existence of a so-called \emph{entropy pair} --- a pair of functions $\eta:\R^N\to\R$, $q:\R^N\to\R^d$, with $\eta$ convex and $q$ satisfying the compatibility condition $q' = \eta' \cdot f'$ (where $f'$ and $q'$ are the Jacobian matrices of $f$ and $q$). An \emph{entropy solution} of \eqref{eq:cauchy} is a weak solution that also satisfies the so-called \emph{entropy inequality}
\begin{equation}\label{eq:entrcond}
\int_{\R_+}\int_{\R^d} \partial_t \phi(x,t) \eta(u(x,t)) + \nabla_x\phi(x,t) \cdot q(u(x,t))\ dxdt + \int_{\R^d}\phi(x,0) \eta(\bar{u}(x)) \ dx \geq 0
\end{equation}
for all nonnegative test functions $\phi\in C^1_c(\R^d\times\R_+)$. Depending on the availability of entropy pairs $(\eta,q)$, the entropy condition leads to various \emph{a priori} bounds on $u$: If, say, $\eta(u)=|u|^p$ (or some perturbation thereof) for some $p\geq1$ then \eqref{eq:entrcond} leads to
\begin{equation}\label{eq:lpbound}
\int_{\R^d}|u(x,t)|^p\,dx \leq \int_{\R^d}|\bar{u}(x)|^p\,dx \qquad \forall\ t>0,
\end{equation}
see e.g.~\cite{Dafermos,GR,HR}.

The global well-posedness of entropy solutions of \eqref{eq:cauchy} has been addressed both  for (multi-dimensional) scalar conservation laws \cite{Kruz1} and for systems in one space dimension (see \cite{Glimm1,BB1,Bress1,HR} and references therein). However, there are no global existence results for entropy solutions of multi-dimensional systems of conservation laws with generic initial data. On the other hand, it has been established recently in \cite{CDL1,CDK1} that entropy solutions for some systems of conservation laws (such as isentropic Euler equations in two space dimensions) may not be unique. This is a strong indication that the paradigm of entropy solutions is not the correct framework for the well-posedness of multi-dimensional systems of hyperbolic conservation laws.

\subsection{Numerical schemes} A wide variety of numerical methods have been developed to approximate entropy solutions of \eqref{eq:cauchy} in a robust and efficient manner. These include finite volume, (conservative) finite difference, discontinuous Galerkin (DG) finite element and spectral (viscosity) methods, see textbooks \cite{GR,HEST1} for further details. Rigorous convergence results of numerical methods to entropy solutions are only available for scalar conservation laws (see e.g.~\cite{GR} for \emph{monotone schemes} and \cite{USFthesis} for high-order schemes) and for some specific numerical methods for one-dimensional systems (\cite{Glimm1} for Glimm's scheme and \cite{HR} for front tracking).

There are no rigorous convergence results to entropy solutions for any numerical schemes approximating multi-dimensional systems of conservation laws. To the contrary, several numerical experiments, such as those presented recently in \cite{fkmt,FMTacta}, strongly suggest that there is no convergence of approximations generated by standard numerical schemes for \eqref{eq:cauchy}, as the mesh is refined. This has been attributed to the emergence of turbulence-like structures at smaller and smaller scales upon mesh refinement (see Figure 4 of \cite{fkmt}). 

\subsection{Measure-valued and Statistical solutions.} Given the lack of well-posedness of entropy solutions for multi-dimensional systems of conservation laws and the lack of convergence of numerical approximations to them, it is natural to seek alternative solution paradigms for \eqref{eq:cauchy}. A possible solution framework is that of \emph{entropy measure-valued solutions}, first proposed by DiPerna in \cite{diperna}, see also \cite{diperna-2}. Measure-valued solutions are \emph{Young measures} \cite{Young}, that is, space-time parameterized probability measures on phase space $\R^N$ of \eqref{eq:cauchy}. Global existence of entropy measure-valued solutions has been considered in \cite{diperna,GWZ1} and in \cite{fkmt,FMTacta} where the authors constructed entropy measure-valued solutions by proving convergence of a Monte Carlo type ensemble-averaging algorithm, based on underlying entropy stable finite difference schemes. 

Although entropy measure-valued solutions for multi-dimensional systems of conservation laws exist globally, it is well known that they are not necessarily unique; see \cite{Sch1, FMTacta} and references therein. In particular, one can even construct multiple entropy measure-valued solutions for scalar conservation laws for the same measure-valued initial data \cite{FMTacta}. Although generic measure-valued solutions might not be unique, numerical experiments presented in \cite{fkmt} indicate that measure-valued solutions of \eqref{eq:cauchy}, computed with the ensemble-averaging algorithm of \cite{fkmt}, are stable with respect to initial perturbations and to the choice of underlying numerical method. This suggests imposing additional constraints on entropy-measure valued solutions in order to recover uniqueness. 

In \cite{FLM17}, the authors implicated the lack of information about (multi-point) statistical correlations in Young measures as a possible cause of the non-uniqueness of entropy measure-valued solutions. Consequently, they introduced a \emph{stronger} solution paradigm termed \emph{statistical solutions} for hyperbolic systems of conservation laws \eqref{eq:cauchy}. Statistical solutions are time-parameterized probability measures on some Lebesgue space $L^p(D;U)$ satisfying \eqref{eq:cl} in an averaged sense. The choice of the exponent $p\geq1$ depends on the available \emph{a priori} bounds for solution of \eqref{eq:cauchy}, such as \eqref{eq:lpbound}. It was shown in \cite{FLM17} that probability measures on $L^p(D;U)$ can be identified with (and indeed are equivalent to) \emph{correlation measures} --- a hierarchy of Young measures defined on tensorized versions of the domain $D$ and the phase space $U$ in \eqref{eq:cauchy}. Statistical solutions have also been introduced in the context of the incompressible Navier--Stokes equations by Foia\c{s} et al.; see \cite{FMRT1} and references therein.

 In \cite{FLM17}, the authors defined statistical solutions of systems of conservation laws \eqref{eq:cl} by requiring that the moments of the time-parameterized probability measure on $L^p(D)$ (or equivalently, of the underlying correlation measure) satisfy an infinite set of (tensorized) partial differential equations (PDEs), consistent with \eqref{eq:cl}. 

The first member of the hierarchy of correlation measures for a statistical solution, is a (classical) Young measure and it can be shown to be an entropy measure valued solution of \eqref{eq:cauchy}, in the sense of DiPerna \cite{diperna}. The $k$-th member ($k \geq 2$) of the hierarchy represents $k$-point spatial correlations. Thus, a statistical solution can be thought of as a measure-valued solution, augmented with information about all possible (multi-point) spatial correlations \cite{FLM17}. Consequently, statistical solutions contain much more information than measure-valued solutions. 

In \cite{FLM17}, the authors constructed a \emph{canonical} statistical solution for scalar conservation laws in terms of the data-to-solution semi-group of Kruzkhov \cite{Kruz1} and showed that this statistical solution is unique under a suitable entropy condition. Numerical approximation of statistical solutions of scalar conservation laws was considered in \cite{FLyeM1}, where the authors proposed Monte Carlo and multi-level Monte Carlo (MLMC) algorithms to compute statistical solutions and showed their convergence. 

\subsection{Aims and scope of this paper.}
Given this background, our main aim in this paper is to study statistical solutions for multi-dimensional systems of conservation laws. To this end, we obtain the following results:
\begin{itemize}
\item We propose a Monte Carlo ensemble averaging based algorithm for computing statistical solutions of systems of conservation laws. This algorithm is a variant of the Monte Carlo algorithms presented in \cite{fkmt} and \cite{FLyeM1}. 
\item Under reasonable assumptions on the underlying numerical scheme, we prove convergence of the ensemble-averaging algorithm to a statistical solution. It is highly non-trivial to identify an appropriate topology on time parameterized probability measures on $L^p(D)$ in order to prove convergence of the computed statistical solutions. To this end, we find a suitable topology and prescribe {novel} sufficient conditions that ensure convergence in this topology.
\item We present several numerical experiments that illustrate the robustness of our proposed algorithm and also reveal interesting properties of statistical solutions of \eqref{eq:cl}.
\end{itemize} 
As a consequence of our convergence theorem, we establish a \emph{conditional} global existence result for multi-dimensional systems of conservation laws. Moreover, we also propose an {entropy condition} under which we prove a \emph{weak-strong} uniqueness result for statistical solutions, that is, we prove that if there exists a statistical solution of sufficient regularity (in a sense made precise in Section \ref{sec:3}), then all entropy statistical solutions agree with it. 

The rest of the paper is organized as follows: In Section \ref{sec:2}, we provide the mathematical framework by describing the concepts of correlation measures and statistical solutions. We also provide characterizations of the topology on probability measures on $L^p(D)$, in which our subsequent numerical approximations will converge. The entropy condition and the weak-strong uniqueness of statistical solutions are presented in Section \ref{sec:3} and the Monte Carlo ensemble-averaging algorithm (and its convergence) is presented in Section \ref{sec:4}. Numerical experiments are presented in Section \ref{sec:5} and the results of the paper are summarized and discussed in Section \ref{sec:6}. 
\section{Probability measures on $L^p(D;U)$ and Statistical solutions}
\label{sec:2}
In the usual, deterministic interpretation of \eqref{eq:cl}, one attempts to find a function $u = u(t):D\to U$ satisfying \eqref{eq:cl} in a weak or strong sense. (Here, as in the Introduction, we let $D\subset\R^d$ be an open, connected set and we denote $U:=\R^N$.) By contrast, a statistical solution of \eqref{eq:cl} is a probability measure $\mu=\mu_t$ distributed over such functions $u$ and satisfying \eqref{eq:cl} in some averaged sense. Solutions of \eqref{eq:cl} are most naturally found in (some subspace of) $L^p(D;U)$, so $\mu_t$ is required to be a probability measure on $L^p(D;U)$ at each time $t$. In order to write down constitutive equations for $\mu$, it is more natural to work with finite-dimensional projections or \emph{marginals} of $\mu$; these are the so-called \emph{correlation measures} \cite{FLM17}. In this section we provide a self-contained description of correlation measures, probability measures over $L^p(D;U)$, and statistical solutions of \eqref{eq:cl}. 

In order to link probability measures on $L^p$ to their finite-dimensional marginals, we prove in Section~\ref{sec:probmeasLp} that a sequence of such measures converges weakly if and only if it converges with respect to a certain class $\Lgfuncs_p$ of finite-dimensional observables. In Section \ref{sec:corrmeas} we introduce correlation measures and we show that these are in a one-to-one relationship with probability measures over $L^p$, and that they are linked precisely through the finite-dimensional observables $\Lgfuncs_p$. We also prove a compactness result for sequences of correlation measures. In Section~\ref{sec:timedeppromeas} we treat time-parametrized probability and correlation measures, and we prove measurability and compactness results. Finally, in Section~\ref{sec:statsoln} we provide the definition of statistical solutions of \eqref{eq:cl}.

For the sake of clarity, many of the proofs in this rather technical section have been moved to Appendices \ref{app:weakconvergenceequivalenceproof}, \ref{app:proofcompactness} and \ref{app:time-dep-cm}.

\begin{notation}
If $X$ is a topological space then we let $\Borel(X)$ denote the Borel $\sigma$-algebra on $X$, we let $\Meas(X)$ denote the set of signed Radon measures on $(X,\Borel(X))$, and we let $\Prob(X)\subset\Meas(X)$ denote the set of all probability measures on $(X, \Borel(X))$, i.e.~all non-negative $\mu\in\Meas(X)$ with $\mu(X)=1$ (see e.g.~\cite{ambrosio_gradient_flows,Bil08,Kle}). For $k\in\N$ and a multiindex $\alpha\in \{0,1\}^k$ we write $|\alpha|=\alpha_1+\dots+\alpha_k$ and $\bar{\alpha} = \mathbbm{1}-\alpha=(1-\alpha_1,\dots,1-\alpha_k)$, and we let $x_{\alpha}$ be the vector of length $|\alpha|$ consisting of the elements $x_i$ of $x$ for which $\alpha_i$ is non-zero. For a vector $x=(x_1,\dots,x_k)$ we write $\hat{x}^i = (x_1,\dots,x_{i-1},x_{i+1},\dots,x_k)$. For a vector $\xi=(\xi_1,\dots,\xi_k)$ we write $|\xi^\alpha| = |\xi_{\alpha_1}|\cdots|\xi_{\alpha_k}|$ with the convention $\xi_{\alpha_i}=1$ if $\alpha_i=0$.
\end{notation}

\subsection{Probability measures on $L^p(D)$ and weak convergence}\label{sec:probmeasLp}

If $X$ is any topological space and we are given a sequence $\mu_1,\mu_2,\dots \in \Prob(X)$ and some $\mu\in\Prob(X)$, then we say that $\{\mu_n\}_{n\in\N}$ \emph{converges weakly to $\mu$}, written $\mu_n\weakto \mu$, if
\begin{equation}\label{eq:muconvergence}
\Ypair{\mu_n}{F} \to \Ypair{\mu}{F} \qquad\text{as } n\to\infty
\end{equation}
for every $F\in C_b(X)$. (Here and elsewhere, $\Ypair{\mu}{F}=\int_X F(x)\,d\mu(x)$ denotes the expectation of $F$ with respect to $\mu$.) We will be particularly interested in the case $X=L^p(D;U)$, so to study weak convergence in this space we need to work with the space $C_b(L^p(D;U))$. In this section we will see that it is sufficient to prove \eqref{eq:muconvergence} for a much smaller class of functionals $F$, namely those which depend only on finite-dimensional projections of $u\in L^p(D;U)$.

If $E$ and $V$ are Euclidean spaces then a measurable function $g:E\times V\to\R$ is called a \emph{Carath\'eodory function} if $\xi\mapsto g(x,\xi)$ is continuous for a.e.~$x\in E$ and $x\mapsto g(x,\xi)$ is measurable for every $\xi\in V$ (see e.g.~\cite[Section 4.10]{AliBor}). For a number $k\in\N$ and a Carath\'eodory function $g=g(x,\xi):D^k\times U^k\to\R$ we define the functional $L_g : L^p(D;U) \to \R$ by
\begin{equation}\label{eq:Lgdef}
L_g(u) := \int_{D^k} g(x_1,\dots,x_k,u(x_1),\dots,u(x_k))\,dx.
\end{equation}
(Here, $D^k$ denotes the product space $D^k=D\times\dots\times D$, and similarly for $U^k$.) The above integral is clearly not well-defined for every Carath\'eodory function $g$, so we restrict our attention to the following class.

\begin{definition}\label{def:caratheodory}
For every $k\in\N$, we let $\Caratheodory^{k,p}(D;U)$ denote the space of Carath\'eodory functions $g:D^k\times U^k \to \R$ satisfying
\begin{equation}\label{eq:gbounded}
|g(x,\xi)| \leq \sum_{\alpha\in\{0,1\}^k} \phi_{|\bar{\alpha}|}(x_{\bar{\alpha}})|\xi^\alpha|^p \qquad \forall\ x\in D^k,\ \xi\in U^k
\end{equation}
for nonnegative functions $\phi_i\in L^1(D^i)$, $i=0,1,\dots,k$ (with the convention that $L^1(D^0)\cong \R$; see also Example \ref{ex:caratheodory}). We let $\Caratheodory^{k,p}_1(D;U)\subset \Caratheodory^{k,p}(D;U)$ denote the subspace of functions $g$ which are locally Lipschitz continuous, in the sense that there is some $r>0$ and some nonnegative $h \in \Caratheodory^{k-1,p}(D;U)$ such that
\begin{equation}\label{eq:glipschitz}
\begin{split}
\big|g(x,\zeta)-g(y,\xi)\big| \leq \sum_{i=1}^k|\zeta_i-\xi_i|\max\big(|\xi_i|,|\zeta_i|\big)^{p-1} h(\hat{x}^i,\hat{\xi}^i)
\end{split}
\end{equation}
for every $x\in D^k$, $y\in B_r(x)$ and $\xi,\zeta\in U^k$. Last, we denote
\begin{align*}
\Lgfuncs^p(D;U) := \big\{L_g\ :\ g\in \Caratheodory^{k,p}(D;U),\ k\in\N\big\} \\
\Lgfuncs^p_1(D;U) := \big\{L_g\ :\ g\in \Caratheodory^{k,p}_1(D;U),\ k\in\N\big\}
\end{align*}
where $L_g$ is defined in \eqref{eq:Lgdef}.
\end{definition}

\begin{example}\label{ex:caratheodory}
For $k=1$ the condition \eqref{eq:gbounded} asserts that
\[
|g(x,\xi)| \leq \phi_1(x) + \phi_0|\xi|^p
\]
for $0\leq \phi_1\in L^1(D)$ and $\phi_0\in[0,\infty)$, and for $k=2$ that
\[
|g(x_1,x_2,\xi_1,\xi_2)| \leq \phi_2(x_1,x_2) + \phi_1(x_1)|\xi_2|^p + \phi_1(x_2)|\xi_1|^p + \phi_0|\xi_1|^p|\xi_2|^p
\]
for $0\leq \phi_2\in L^1(D^2)$, $0\leq\phi_1\in L^1(D)$ and $\phi_0\in[0,\infty)$.
\end{example}
We will simply denote $\Caratheodory^{k,p}=\Caratheodory^{k,p}(D;U)$, etc.\ when the domain and image $D, U$ are clear from the context.

\begin{lemma}\label{lem:lgcont}
Every functional $L_g\in\Lgfuncs^p$ is well-defined and finite on $L^p(D;U)$. Every functional $L_g\in\Lgfuncs^p_1$ is continuous and is Lipschitz continuous on bounded subsets of $L^p(D;U)$.
\end{lemma}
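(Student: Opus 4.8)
The plan is to verify the two claims directly from the structural bounds \eqref{eq:gbounded} and \eqref{eq:glipschitz}. For well-definedness and finiteness, fix $g\in\Caratheodory^{k,p}$ and $u\in L^p(D;U)$. Since $g$ is Carath\'eodory, the map $x\mapsto g(x,u(x_1),\dots,u(x_k))$ is measurable on $D^k$ (this is the standard fact that composing a Carath\'eodory function with a measurable function yields a measurable function; see \cite[Section 4.10]{AliBor}). To see it is integrable, apply \eqref{eq:gbounded} and integrate term by term over the finitely many multiindices $\alpha\in\{0,1\}^k$: the $\alpha$-term is bounded by $\phi_{|\bar\alpha|}(x_{\bar\alpha})\,|u(x_{\alpha_1})|^p\cdots$, which by Tonelli factorizes into $\|\phi_{|\bar\alpha|}\|_{L^1(D^{|\bar\alpha|})}\cdot\prod_{i:\alpha_i=1}\|u\|_{L^p(D)}^p = \|\phi_{|\bar\alpha|}\|_{L^1}\,\|u\|_{L^p}^{p|\alpha|}$. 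Summing gives $|L_g(u)|\leq\sum_{\alpha}\|\phi_{|\bar\alpha|}\|_{L^1}\|u\|_{L^p}^{p|\alpha|}<\infty$, so $L_g$ is well-defined and finite, and moreover bounded on bounded subsets of $L^p$.

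For the second claim, take $g\in\Caratheodory^{k,p}_1$ with associated radius $r>0$ and $h\in\Caratheodory^{k-1,p}$ as in \eqref{eq:glipschitz}. Given $u,v\in L^p(D;U)$, I would estimate $|L_g(u)-L_g(v)|$ by writing $g(x,u(x))-g(x,v(x))$ and applying \eqref{eq:glipschitz} with $y=x$ (so the ball condition $y\in B_r(x)$ is vacuous), obtaining the pointwise bound $\sum_{i=1}^k |u(x_i)-v(x_i)|\max(|u(x_i)|,|v(x_i)|)^{p-1} h(\hat x^i,\hat u^i)$ — here one must be slightly careful about whether $h$ is evaluated at $\hat u$ or $\hat v$; by symmetrizing (or bounding $\max(|u|,|v|)\leq|u|+|v|$ and using that $h$ grows at most like the $\Caratheodory^{k-1,p}$ bound in its remaining $\xi$-arguments) one reduces to an integrable majorant. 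Integrating over $D^k$, the $i$-th term factors (Tonelli) into an integral over $x_i$ of $|u(x_i)-v(x_i)|\max(|u(x_i)|,|v(x_i)|)^{p-1}$ times an integral over $\hat x^i$ of $h$; the former is controlled by H\"older's inequality with exponents $p$ and $p/(p-1)$ by $\|u-v\|_{L^p}\big(\|u\|_{L^p}+\|v\|_{L^p}\big)^{p-1}$, and the latter by the $\Caratheodory^{k-1,p}$ bound on $h$ is finite and grows polynomially in $\|u\|_{L^p},\|v\|_{L^p}$. Hence $|L_g(u)-L_g(v)|\leq C\big(\|u\|_{L^p},\|v\|_{L^p}\big)\,\|u-v\|_{L^p}$ with $C$ bounded on bounded sets, which is exactly Lipschitz continuity on bounded subsets; continuity on all of $L^p$ follows since any convergent sequence is bounded.

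The main obstacle is bookkeeping rather than conceptual: making the H\"older estimate rigorous requires that the "weight" $\max(|u(x_i)|,|v(x_i)|)^{p-1}$ pairs correctly in $L^{p/(p-1)}(D)$ against $|u(x_i)-v(x_i)|\in L^p(D)$, and one must confirm that the factor $h(\hat x^i,\hat\xi^i)$, after inserting $\xi=u$ or $\xi=v$, integrates to something finite and polynomially bounded in the $L^p$ norms — this is where the nonnegativity of $h$ and its membership in $\Caratheodory^{k-1,p}$ (hence the term-by-term Tonelli factorization used in the first part) are essential. The case $p=1$ is a mild special case in which $\max(\cdot)^{p-1}=1$ and no H\"older step is needed. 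Once these majorants are assembled the conclusion is immediate; the appendix referenced in the excerpt presumably carries out these estimates in full.
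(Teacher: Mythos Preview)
Your proposal is correct and follows essentially the same route as the paper's proof: bound $|L_g(u)|$ term-by-term via \eqref{eq:gbounded} and Tonelli to get finiteness and polynomial growth in $\|u\|_{L^p}$, then for $g\in\Caratheodory^{k,p}_1$ apply \eqref{eq:glipschitz} with $y=x$ and use H\"older in the $x_i$-variable to extract $\|u-v\|_{L^p}$, with the remaining $h$-factor controlled by the first part. Your treatment is in fact slightly more careful than the paper's (you flag the measurability step, the ambiguity in whether $h$ is evaluated at $\hat u$ or $\hat v$, and the degenerate case $p=1$), but the skeleton is identical.
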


\begin{theorem}\label{thm:weakconvergenceequivalence}
Let $\mu_n,\mu\in \Prob(L^p(D;U))$ for $n\in\N$ satisfy $\supp\mu, \supp\mu_n\subset B$ for all $n\in\N$, for some bounded set $B\subset L^p(D;U)$. Then $\mu_n\weakto\mu$ \emph{if and only if} $\Ypair{\mu_n}{F}\to\Ypair{\mu}{F}$ for all $F\in \Lgfuncs^p_1(D;U)$.
\end{theorem}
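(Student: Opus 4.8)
The plan is to prove the two directions separately, noting that the forward direction is essentially immediate while the reverse direction carries the real content. For the forward direction, suppose $\mu_n \weakto \mu$. By Lemma~\ref{lem:lgcont}, every $F = L_g \in \Lgfuncs^p_1$ is continuous on $L^p(D;U)$ and Lipschitz on bounded sets. Since all measures are supported in the fixed bounded set $B$, we may replace $F$ by a truncation that agrees with $F$ on $B$ and is bounded and continuous on all of $L^p(D;U)$ — for instance compose $F$ with a cutoff depending on $\|u\|_{L^p}$, or simply observe that $F$ restricted to the closed bounded set $\overline{B}$ extends to a bounded continuous function on $L^p$ by Tietze. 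Then $\Ypair{\mu_n}{F} = \Ypair{\mu_n}{\tilde F} \to \Ypair{\mu}{\tilde F} = \Ypair{\mu}{F}$, giving the claim.

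For the reverse direction, assume $\Ypair{\mu_n}{F} \to \Ypair{\mu}{F}$ for all $F \in \Lgfuncs^p_1$; I want to upgrade this to testing against all of $C_b(L^p(D;U))$. The strategy is a density/approximation argument on the compact metric space $\overline{B}$ (closed bounded subsets of $L^p$ need not be compact, so strictly one works with the restriction of the topology and uses tightness-type reasoning, or — more cleanly — one notes that weak convergence of measures supported in $B$ is metrized and it suffices to check convergence against a convergence-determining class). The key point is to show that the algebra generated by $\Lgfuncs^p_1$ is rich enough: functionals of the form $u \mapsto \prod_{j=1}^m L_{g_j}(u)$ with $g_j \in \Caratheodory^{k_j,p}_1$ are again of the form $L_g$ for a suitable $g \in \Caratheodory^{K,p}_1$ with $K = \sum k_j$ (the product of integrals over $D^{k_j}$ is an integral over $D^K$, and the bound \eqref{eq:gbounded} and Lipschitz bound \eqref{eq:glipschitz} are stable under such tensor products on the bounded set $B$), so $\Lgfuncs^p_1$-convergence already implies convergence against this entire algebra $\mathcal{A}$. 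This algebra contains the constants, and it separates points of $L^p(D;U)$: if $u \neq v$ in $L^p$ then $\int_D |u-v|^p\,dx > 0$, and one builds a $g \in \Caratheodory^{1,p}_1$ (e.g. a smoothed, compactly-supported-in-$\xi$ approximation of $\xi \mapsto \xi$ localized near the relevant values, or more simply exploit that finitely many linear functionals $u \mapsto \int_D u \,\psi\,dx$ against $\psi \in C_c^\infty$ distinguish $u$ from $v$, and these lie in $\Lgfuncs^p_1$). Then, on any compact set, Stone–Weierstrass shows $\mathcal{A}$ is uniformly dense in the continuous functions; combined with tightness of $\{\mu_n\}$ (automatic here since they live on the fixed bounded — hence, after passing to the $L^p$-weak or an auxiliary Polish refinement, tight — set $B$), one concludes $\Ypair{\mu_n}{F}\to\Ypair{\mu}{F}$ for every $F\in C_b(L^p(D;U))$, i.e.\ $\mu_n\weakto\mu$.

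The main obstacle is the non-compactness of bounded subsets of $L^p(D;U)$ in the norm topology, which prevents a naive application of Stone–Weierstrass directly to $C_b(L^p)$. The fix is to combine the separating-algebra property of $\Lgfuncs^p_1$ with a tightness argument: one must verify that $\{\mu_n\}$ is tight on $L^p(D;U)$, so that testing against a point-separating subalgebra of $C_b$ suffices to pin down the weak limit. Here the uniform support condition $\supp\mu_n \subset B$ is exactly what is needed — it should be invoked to produce, for each $\eps>0$, a compact $K_\eps \subset L^p$ with $\mu_n(K_\eps) \geq 1-\eps$ uniformly in $n$ (using that a bounded set together with a mild equi-integrability/compactness input gives relative compactness, or alternatively working in the appropriate metrizable setting where $B$ is relatively compact). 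Once tightness is in hand, the Stone–Weierstrass step runs on the compact sets $K_\eps$ and the errors outside are controlled by $\eps$, completing the argument. (The detailed verification of tightness and of the closure of $\Caratheodory^{k,p}_1$ under products on $B$ is the kind of routine-but-careful bookkeeping that I would relegate to the appendix, as the authors indicate they have done.)
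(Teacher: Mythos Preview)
Your forward direction is fine and matches the paper's argument (Tietze extension on the closed bounded set $\overline{B}$).

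The reverse direction has a genuine gap: you need uniform tightness of $\{\mu_n\}$, and bounded support in $L^p(D;U)$ does \emph{not} give this. Each $\mu_n$ is individually tight by Ulam's theorem, but there is no reason the compact sets $K_\eps$ can be chosen uniformly in $n$. Your parenthetical (``a mild equi-integrability/compactness input'' or ``an auxiliary Polish refinement where $B$ is relatively compact'') is exactly where the argument would need real content, and none is supplied by the hypotheses. Without uniform tightness, the Stone--Weierstrass step on compacts $K_\eps$ cannot be glued together to yield convergence against all of $C_b$.

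The paper sidesteps this entirely. It uses a countable \emph{convergence-determining} class $\Lambda_0$ from \cite{ambrosio_gradient_flows}, built out of ``hat functions'' $H(u)=\min(q_1+q_2\|u-v\|_{L^p},\,q_3)$ and their finite minima. The point is then only to approximate each such $H$ \emph{uniformly on $B$} by elements of $\Lgfuncs^p_1$. This is done by observing that $u\mapsto\|u-v\|_{L^p}^p$ lies in $\Lgfuncs^p_1$ and takes values in the compact interval $[0,R]$ on $B$ (with $R=\sup_{u\in B}\|u-v\|_{L^p}^p$), so one can apply the \emph{one-dimensional} Weierstrass theorem to approximate $s\mapsto\min(q_1+q_2|s|^{1/p},q_3)$ by a polynomial $P$ on $[0,R]$, and then $u\mapsto P(\|u-v\|_{L^p}^p)\in\Lgfuncs^p_1$ by the closure of $\Lgfuncs^p_1$ under polynomials (your product/sum observation, which the paper also records as a lemma). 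The minima of several hat functions are handled the same way. No tightness is ever invoked; the compactness that makes Weierstrass work lives in $\R$, not in $L^p$.
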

The proofs of the above results can be found in Appendix \ref{app:weakconvergenceequivalenceproof}. The ``only if'' part of \Cref{thm:weakconvergenceequivalence} is trivial, since every $F\in \Lgfuncs^p_1$ belongs to $C_b$ when restricted to a bounded set; the converse relies on an approximation argument found in \cite{ambrosio_gradient_flows}.

\subsection{Correlation measures}\label{sec:corrmeas}

In short, a \emph{correlation measure} prescribes the joint distribution of some uncertain quantity $u$ at any finite collection of spatial points $x_1,\dots,x_k$. Below we provide the rigorous definition of correlation measures and then state the result from \cite{FLM17} on the equivalence between correlation measures and probability measures on $L^p(D;U)$.

We denote $\Caratheodory^k_0(D;U) := L^1\big(D^k,C_0(U^k)\big)$. By identifying the expressions $g(x)(\xi)$ and $g(x,\xi)$, we can view $\Caratheodory^k_0(D;U)$ as a subspace of $\Caratheodory^{k,p}(D;U)$ for any $p\geq1$ (with the choice $\phi_0,\dots,\phi_{k-1}\equiv0$ and $\phi_k(x)=\|g(x)\|_{C_0(U^k)}$ in \eqref{eq:gbounded}). 

\begin{theorem}
The dual of $\Caratheodory^k_0(D;U)$ is the space $\Caratheodory^{k*}_0(D;U) := L^\infty_w\big(D^k,\Meas(U^k)\big)$, the space of bounded, weak* measurable maps from $D^k$ to $\Meas(U^k)$, under the duality pairing
\[
\Ypair{\nu^k}{g}_{\Caratheodory^k} = \int_{D^k} \Ypair{\nu^k_x}{g(x)}\,dx
\]
(where $\Ypair{\nu^k_x}{g(x)} = \int_{U^k} g(x,\xi)\,d\nu^k_x(\xi)$ is the usual duality pairing between Radon measures $\Meas(U^k)$ and continuous functions $C_b(U^k)$).
\end{theorem}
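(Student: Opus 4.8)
The plan is to recognise this as the standard identification of the dual of a Bochner space $L^1(X;E)$ with the space $L^\infty_{w*}(X;E^*)$ of (essentially bounded) weak* measurable maps, which holds whenever $X$ is $\sigma$-finite and $E$ is separable. Here $X=D^k$ with Lebesgue measure (which is $\sigma$-finite since $D\subseteq\R^d$) and $E=C_0(U^k)$, which is separable because $U^k=\R^{Nk}$ is second countable and locally compact; by the Riesz representation theorem $E^*=\Meas(U^k)$ isometrically, with exactly the stated pairing. Thus the statement reduces to this classical template (see the standard literature on vector measures and Bochner spaces), but I would include the proof in both directions.

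For the easy direction, given $\nu\in\Caratheodory^{k*}_0=L^\infty_w(D^k,\Meas(U^k))$ I would first verify that for every $g\in\Caratheodory^k_0=L^1(D^k,C_0(U^k))$ the scalar map $x\mapsto\Ypair{\nu^k_x}{g(x)}$ is measurable: this is immediate for $g=\Indicator{A}\,g_0$ with $g_0\in C_0(U^k)$ by the definition of weak* measurability, hence for simple functions, and then for general $g$ by approximating in $L^1(D^k,C_0(U^k))$ and passing to an a.e.\ convergent subsequence. The bound $|\Ypair{\nu^k_x}{g(x)}|\leq\norm{\nu^k_x}_{\Meas(U^k)}\norm{g(x)}_{C_0(U^k)}$ then shows the integrand lies in $L^1(D^k)$ and $|\Ypair{\nu^k}{g}_{\Caratheodory^k}|\leq\norm{\nu}_{L^\infty_w}\norm{g}_{\Caratheodory^k_0}$, so $\nu$ induces a bounded functional.

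For the substantive direction, given $\ell\in(\Caratheodory^k_0)^*$, for each measurable $A\subseteq D^k$ of finite measure define $F_A\in C_0(U^k)^*=\Meas(U^k)$ by $\Ypair{F_A}{g_0}:=\ell(\Indicator{A}\,g_0)$. Then $\norm{F_A}_{\Meas}\leq\norm{\ell}\,|A|$, the map $A\mapsto F_A$ is finitely additive, and it is countably additive in the total variation norm because for pairwise disjoint $A_j$ with union $A$ one has $\Indicator{A}\,g_0=\sum_j\Indicator{A_j}\,g_0$ with convergence in $L^1(D^k,C_0(U^k))$ by dominated convergence, and $\ell$ is continuous. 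Hence $F$ is a countably additive $\Meas(U^k)$-valued measure of bounded variation, absolutely continuous with respect to Lebesgue measure. The key step is to produce a weak* measurable Radon--Nikodym density: since $\Meas(U^k)$ is the dual of the separable space $C_0(U^k)$, there exists a weak* measurable $x\mapsto\nu^k_x\in\Meas(U^k)$ with $F_A=\int_A\nu^k_x\,dx$ (Gelfand integral) and $\norm{\nu^k_x}_{\Meas}\leq\norm{\ell}$ for a.e.\ $x$, i.e.\ $\nu\in L^\infty_w(D^k,\Meas(U^k))$. Evaluating on simple $g=\Indicator{A}\,g_0$ gives $\ell(g)=\Ypair{\nu^k}{g}_{\Caratheodory^k}$, and density of simple functions plus continuity of both sides extends this to all of $\Caratheodory^k_0$. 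A matching lower bound, obtained by testing $\ell$ on $\Indicator{A}\,g_0$ with $A$ shrinking around a Lebesgue point and $g_0$ nearly norming $\nu^k_x$, gives $\norm{\ell}=\esssup_{x}\norm{\nu^k_x}_{\Meas}$, so the correspondence is an isometric isomorphism.

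I expect the weak* measurable Radon--Nikodym step to be the main obstacle, and this is precisely where separability of $C_0(U^k)$ is used: fix a countable dense subset $\{g_m\}\subset C_0(U^k)$, apply the scalar Radon--Nikodym theorem to each real measure $A\mapsto\Ypair{F_A}{g_m}$ to obtain densities $f_m\in L^\infty(D^k)$ with $\norm{f_m}_\infty\leq\norm{\ell}\norm{g_m}$, and check that for a.e.\ $x$ the assignment $g_m\mapsto f_m(x)$ is linear and $\norm{\ell}$-Lipschitz in the $C_0$-norm on $\{g_m\}$, hence extends to a unique $\nu^k_x\in\Meas(U^k)$ with $\norm{\nu^k_x}_{\Meas}\leq\norm{\ell}$; weak* measurability of $x\mapsto\nu^k_x$ follows from the measurability of the $f_m$. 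A secondary technical point worth stating carefully is the measurability claim used in the easy direction, which is exactly what makes the pairing $\Ypair{\nu^k}{g}_{\Caratheodory^k}$ well-defined for all $g\in\Caratheodory^k_0$.
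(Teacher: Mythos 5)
Your proposal is correct, but note that the paper does not prove this statement at all: it is quoted as a known result, with a pointer to the literature (the reference given after the theorem, i.e.\ the standard duality $L^1(X;E)^*\cong L^\infty_{w*}(X;E^*)$ for $\sigma$-finite $X$ and separable $E$, specialized to $E=C_0(U^k)$ via Riesz representation). What you have written is essentially the standard proof of that cited theorem, and both directions are sound; the weak* Radon--Nikodym construction through a countable dense family $\{g_m\}$ and the scalar Radon--Nikodym theorem is exactly the usual argument. Two small points to tighten if you write it out in full: take $\{g_m\}$ to be closed under rational linear combinations so that the a.e.\ linearity of $g_m\mapsto f_m(x)$ makes sense on a single null-set complement, and drop (or justify separately) the claim of countable additivity of $A\mapsto F_A$ in total variation norm --- what your computation with dominated convergence gives directly is setwise (weak*) countable additivity, which is all your subsequent construction uses.
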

For more details and references for the above result, see \cite{ball}.

\begin{definition}[Fjordholm, Lanthaler, Mishra \cite{FLM17}]\label{def:correlationmeasure}
A \emph{correlation measure} is a collection $\cm = (\nu^1,\nu^2,\dots)$ of maps $\nu^k \in \Caratheodory^{k*}_0(D;U)$ satisfying for all $k=1,2,\dots$
\begin{enumerate}[label=\it (\roman*)]
\item $\nu^k_x \in \Prob(U^k)$ for a.e.\ $x\in D^k$, and the map $x \mapsto \Ypair{\nu^k_x}{f}$ is measurable for every $f\in C_b(U^k)$. (In other words, $\nu^k$ is a Young measure from $D^k$ to $U^k$.)
\item\textit{Symmetry:} if $\sigma$ is a permutation of $\{1,\dots,k\}$ and $f\in C_0(U^k)$ then $\Ypair{\nu^k_{\sigma(x)}}{f(\sigma(\xi))} = \Ypair{\nu^k_x}{f(\xi)}$ for a.e.\ $x\in D^k$. 
\item\textit{Consistency:} If $f\in C_b(U^k)$ is of the form $f(\xi_1,\dots,\xi_k) = g(\xi_1,\dots,\xi_{k-1})$ for some $g\in C_0(U^{k-1})$, then $\Ypair{\nu^k_{x_1,\dots,x_k}}{f} = \Ypair{\nu^{k-1}_{x_1,\dots,x_{k-1}}}{g}$ Lebesgue-a.e.~$(x_1,\dots,x_k)\in D^k$.
\item \textit{$L^p$ integrability:}
\begin{equation}\label{eq:corrlpbound}
\int_{D} \Ypair{\nu^1_x}{|\xi|^p}\,dx < +\infty.
\end{equation}
\item\textit{Diagonal continuity:} $\lim_{r\to0} \omega^p_r(\nu^2) = 0$, where
\begin{equation}\label{eq:defmodcont}
\omega_r^p(\nu^2) := \int_D \intavg_{B_r(x)} \Ypair{\nu^2_{x,y}}{|\xi_1-\xi_2|^p}\,dydx.
\end{equation}
\end{enumerate}
Each element $\nu^k$ will be called a \emph{correlation marginal}. The functional $\omega_r^p$ is called the \emph{modulus of continuity} of $\cm$. We let $\Corrmeas^{p}(D;U)$ denote the set of all correlation measures.
\end{definition}

The next result shows that there is a duality relation between correlation marginals and the probability measures $\mu\in\Prob(L^p)$ discussed in the previous section.

\begin{theorem}[Fjordholm, Lanthaler, Mishra \cite{FLM17}]\label{thm:corrmeasduality}
For every correlation measure $\cm\in\Corrmeas^p(D;U)$ there is a unique probability measure $\mu\in\Prob(L^p(D;U))$ whose $p$-th moment is finite,
\begin{equation}\label{eq:mufinitemoment}
\int_{L^p} \|u\|_{L^p}^p\,d\mu(u) < \infty
\end{equation}
and such that $\mu$ is dual to $\cm$: the identity
\begin{equation}\label{eq:corrmeasduality}
\int_{D^k}\Ypair{\nu^k}{g(x)}\,dx = \int_{L^p} \int_{D^k}g(x,u(x))\,dxd\mu(u)
\end{equation}
holds for every $g\in\Caratheodory^k_0(D;U)$ and all $k\in\N$. Conversely, for every $\mu\in\Prob(L^p(D;U))$ satisfying \eqref{eq:mufinitemoment} there is a unique correlation measure $\cm\in\Corrmeas^p(D;U)$ that is dual to~$\mu$.
\end{theorem}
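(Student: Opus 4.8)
The plan is to prove the two implications separately: the passage from a probability measure $\mu$ to a correlation measure $\cm$ is a Riesz‑representation argument followed by routine verifications, while the reconstruction of $\mu$ from $\cm$ is the substantial part and uses the diagonal continuity in an essential way. \emph{From $\mu$ to $\cm$:} fix $k\in\N$ and consider the linear functional $\Lambda_k:\Caratheodory^k_0(D;U)\to\R$,
\[
\Lambda_k(g) := \int_{L^p}\int_{D^k} g(x,u(x))\,dx\,d\mu(u).
\]
Since $|g(x,u(x))|\leq\norm{g(x)}_{C_0(U^k)}$ pointwise and $x\mapsto\norm{g(x)}_{C_0(U^k)}$ lies in $L^1(D^k)$, the inner integral is finite and bounded by $\norm{g}_{\Caratheodory^k_0}$ uniformly in $u$, so $\Lambda_k$ is well defined, linear, and $|\Lambda_k(g)|\leq\norm{g}_{\Caratheodory^k_0}$. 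By the duality $\Caratheodory^{k*}_0(D;U) = L^\infty_w(D^k,\Meas(U^k))$ there is a unique $\nu^k\in L^\infty_w(D^k,\Meas(U^k))$ with $\Lambda_k(g) = \int_{D^k}\Ypair{\nu^k_x}{g(x)}\,dx$, which is precisely \eqref{eq:corrmeasduality}. One then checks the axioms of Definition~\ref{def:correlationmeasure}: testing $\Lambda_k$ on nonnegative product functions $\psi(x)\chi(\xi)$ with $0\leq\chi\nearrow1$ gives, by monotone convergence, $\nu^k_x\geq0$ and $\nu^k_x(U^k)=1$ a.e., so (i) holds (weak* measurability being built into $L^\infty_w$); symmetry (ii) follows by the change of variables $x\mapsto\sigma(x)$ inside $\int_{D^k}g(x,u(x))\,dx$, using that the integrand evaluates the \emph{same} function $u$ in every slot; consistency (iii) follows by taking $g$ independent of $(x_k,\xi_k)$ with unit mass in $x_k$ and applying Fubini; the $L^p$‑bound (iv) is the identity $\int_D\Ypair{\nu^1_x}{|\xi|^p}\,dx = \int_{L^p}\norm{u}_{L^p}^p\,d\mu(u)<\infty$, obtained after extending \eqref{eq:corrmeasduality} to $p$‑growth by monotone approximation with compactly supported continuous functions and invoking \eqref{eq:mufinitemoment}; and for diagonal continuity (v) the same extension of \eqref{eq:corrmeasduality} applied to $\nu^2$ gives
\[
\omega_r^p(\nu^2) = \int_{L^p}\Big(\int_D\intavg_{B_r(x)}|u(x)-u(y)|^p\,dy\,dx\Big)\,d\mu(u),
\]
where for each fixed $u\in L^p(D;U)$ the inner integral tends to $0$ as $r\to0$ by continuity of translations in $L^p$ and is dominated by $2^p\norm{u}_{L^p}^p$; dominated convergence with $\mu$‑integrable majorant $2^p\norm{u}_{L^p}^p$ (from \eqref{eq:mufinitemoment}) yields $\omega_r^p(\nu^2)\to0$. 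Uniqueness of $\cm$ is immediate, since \eqref{eq:corrmeasduality} determines each $\Lambda_k$, hence each $\nu^k$.

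\emph{From $\cm$ to $\mu$:} here I would argue by discretization and compactness. Choose bounded open sets $Q_j$ with $\overline{Q_j}\subset D$ and $\bigcup_j Q_j = D$, partitioned into finitely many cubes $\{C_i^j\}$ of side $h_j\to0$, and define $\mu_j\in\Prob(L^p(D;U))$ as the law of the random function vanishing outside $Q_j$ and constant on each $C_i^j$, with the joint law of its cell values obtained by averaging the correlation marginal $\nu^{K_j}_{y_1,\dots,y_{K_j}}$ over $(y_1,\dots,y_{K_j})$ in the corresponding product of cells (a genuine probability measure on $U^{K_j}$ by Fubini, symmetric and consistent under coarsening by (ii)--(iii)). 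The crucial point is \textbf{tightness of $\{\mu_j\}$ in $L^p(D;U)$}, which I would establish via the Fréchet--Kolmogorov compactness criterion: a uniform bound on $\int_D\Ypair{\cdot}{|u|^p}$ and decay of the mass of $|u|^p$ outside $Q$ as $Q\nearrow D$ both follow from (iv) and dominated convergence, while the uniform (over the tail of the sequence) smallness of the $L^p$‑modulus $\int_D\intavg_{B_r(x)}|u(x)-u(y)|^p\,dy\,dx$ as $r\to0$ is read off from the diagonal continuity (v) of $\nu^2$ applied to the two‑point marginals of $\mu_j$, the discretization error being controlled by $\omega_{r+ch_j}^p(\nu^2)$. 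Prokhorov's theorem then yields a subsequence $\mu_{j_\ell}\weakto\mu$; by Lebesgue's differentiation theorem the finite‑dimensional marginals of $\mu_{j_\ell}$ converge to those prescribed by $\cm$, so passing to the limit in \eqref{eq:corrmeasduality} (each $L_g$, $g\in\Caratheodory^k_0$, being continuous on the relevant sets) shows $\mu$ is dual to $\cm$, and $\int_{L^p}\norm{u}_{L^p}^p\,d\mu(u)<\infty$ follows from (iv) together with lower semicontinuity of $u\mapsto\norm{u}_{L^p}^p$ under weak convergence. For uniqueness, if $\mu$ and $\mu'$ are both dual to $\cm$ then $\Ypair{\mu}{L_g}=\Ypair{\mu'}{L_g}$ for all $g\in\Caratheodory^k_0$; extending this to all $g\in\Caratheodory^{k,p}_1(D;U)$ by truncation using (iv) shows $\mu$ and $\mu'$ agree on $\Lgfuncs^p_1(D;U)$, and since \Cref{thm:weakconvergenceequivalence} makes this class measure‑determining on probability measures with uniformly bounded support, a further truncation in $L^p(D;U)$ gives $\mu=\mu'$.

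The step I expect to be the main obstacle is the tightness estimate in the reconstruction --- specifically, showing that the abstract ``continuum'' quantity $\omega_r^p(\nu^2)$ genuinely controls the $L^p$‑oscillation of the \emph{discretized} measures $\mu_j$ uniformly in the mesh $h_j$, i.e.\ that replacing the point values $\nu^2_{x,y}$ by cell averages and integrating over $B_r(x)$ produces an error governed by $\omega^p$ at a slightly larger radius. Making this quantitative, while simultaneously handling the unbounded phase space $U$ (where $|\xi_1-\xi_2|^p$ must be split via consistency into $|\xi|^p$ terms bounded by (iv)) and the open, possibly unbounded, domain $D$, is the technical heart of the argument; the remainder is bookkeeping with Fubini, monotone and dominated convergence, and the duality $\Caratheodory^{k*}_0 = L^\infty_w(D^k,\Meas(U^k))$.
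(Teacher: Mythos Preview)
The paper does not actually prove this theorem: it is stated with the attribution ``[Fjordholm, Lanthaler, Mishra \cite{FLM17}]'' and no proof is given here, the result being imported wholesale from that reference. There is therefore no proof in the present paper against which to compare your proposal.

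That said, a brief comment on your sketch. The direction $\mu\Rightarrow\cm$ is essentially correct and routine. For the converse $\cm\Rightarrow\mu$, your discretization-and-tightness strategy is in the spirit of the argument in \cite{FLM17}, and you have correctly identified the crux: controlling the $L^p$-oscillation of the discretized measures $\mu_j$ uniformly in the mesh via $\omega_r^p(\nu^2)$. One point to flag: your appeal to \Cref{thm:weakconvergenceequivalence} for uniqueness is circular in the context of this paper, since that theorem assumes \emph{bounded} support of $\mu$ and $\mu'$, whereas here you only have a finite $p$th moment \eqref{eq:mufinitemoment}. The uniqueness in \cite{FLM17} is instead obtained by showing that the functionals $L_g$ (or rather their bounded versions) generate the Borel $\sigma$-algebra of $L^p(D;U)$, which does not require bounded support; your ``further truncation in $L^p(D;U)$'' would need to be made precise to close this gap.
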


\begin{remark}
By using Lebesgue's dominated convergence theorem, it is not hard to show that the identity \eqref{eq:corrmeasduality} can be extended to all $g\in \Caratheodory^{k,p}(D;U)$, as long as both integrals are well-defined. In particular, this is true if $\mu$ is supported on a bounded subset of $L^p(D;U)$.
\end{remark}

Later on, we will be particularly interested in those $\mu\in\Prob(L^p)$ that have bounded support. The following lemma shows how the property of having bounded support can be expressed in terms of the corresponding correlation measure.

\begin{lemma}
Let $\cm\in\Corrmeas^p(D;U)$ and $\mu\in\Prob(L^p(D;U))$ be dual to one another. Then
\begin{equation}\label{eq:esssupidentity}
\esssup_{u\in L^p} \|u\|_{L^p} = \limsup_{k\to\infty} \left(\int_{D^k} \Ypair{\nu^k_x}{|\xi_1|^p\cdots|\xi_k|^p}\,dx\right)^{1/kp}
\end{equation}
where the ``$\esssup$'' is taken with respect to $\mu$.
\end{lemma}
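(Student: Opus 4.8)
The plan is to establish the two inequalities separately, using the duality identity \eqref{eq:corrmeasduality} (extended to $\Caratheodory^{k,p}$ when $\mu$ has bounded support, or via truncation otherwise) together with standard facts about $L^p$ norms. Write $M := \esssup_{u\in L^p}\|u\|_{L^p}$ (possibly $+\infty$) and, for each $k$, set
\[
I_k := \left(\int_{D^k} \Ypair{\nu^k_x}{|\xi_1|^p\cdots|\xi_k|^p}\,dx\right)^{1/kp}.
\]
The key observation is that by the duality \eqref{eq:corrmeasduality} applied to the (separable, if one truncates) Carath\'eodory function $g(x,\xi) = |\xi_1|^p\cdots|\xi_k|^p$, one has
\[
\int_{D^k} \Ypair{\nu^k_x}{|\xi_1|^p\cdots|\xi_k|^p}\,dx = \int_{L^p}\left(\int_D |u(x)|^p\,dx\right)^k d\mu(u) = \int_{L^p} \|u\|_{L^p}^{kp}\,d\mu(u),
\]
so that $I_k = \big\|\,\|u\|_{L^p}\,\big\|_{L^{kp}(\mu)}$. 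The statement then reduces to the elementary fact that for any nonnegative measurable function $\Phi$ on a probability space, $\|\Phi\|_{L^q} \to \esssup \Phi$ as $q\to\infty$, applied to $\Phi(u) = \|u\|_{L^p}$ with $q = kp$.

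First I would prove $\limsup_k I_k \leq M$: this is immediate since $I_k = \|\Phi\|_{L^{kp}(\mu)} \leq (\esssup\Phi)\cdot \mu(L^p)^{1/kp} = M$ (interpreting the bound as vacuous when $M=\infty$). For the reverse inequality $\liminf_k I_k \geq M$, fix any $c < M$; then the set $A = \{u : \|u\|_{L^p} > c\}$ has $\mu(A) = \delta > 0$, whence $I_k \geq c\,\mu(A)^{1/kp} = c\,\delta^{1/kp} \to c$ as $k\to\infty$, so $\liminf_k I_k \geq c$; letting $c\uparrow M$ finishes the case $M<\infty$, and taking $c\to\infty$ handles $M=\infty$. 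Combining the two bounds gives $\lim_k I_k = M$, which is the claimed identity (in particular the $\limsup$ is an honest limit).

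The one technical point that needs care — and which I expect to be the main obstacle — is justifying the application of \eqref{eq:corrmeasduality} to $g(x,\xi)=|\xi_1|^p\cdots|\xi_k|^p$, since this $g$ is not in $\Caratheodory^k_0$ and need not be in $\Caratheodory^{k,p}$ either (its growth is $|\xi_1|^p\cdots|\xi_k|^p$, which does appear as the top term in \eqref{eq:gbounded}, but integrability of $\phi_0$ against nothing is fine; the subtlety is rather whether the right-hand integral $\int_{L^p}\|u\|_{L^p}^{kp}\,d\mu$ is finite). When $\mu$ has bounded support this is trivial and the Remark after \Cref{thm:corrmeasduality} applies directly. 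In general one argues by monotone approximation: replace $|\xi_i|^p$ by $\min(|\xi_i|^p, R)$, which yields a bounded Carath\'eodory function to which \eqref{eq:corrmeasduality} (via the Remark, or directly via Lebesgue dominated convergence from the $\Caratheodory^k_0$ case) applies, and then let $R\to\infty$ using the monotone convergence theorem on both sides. The identity \eqref{eq:corrlpbound} guarantees $\nu^1$ has finite first $p$-moment, but for $k\geq 2$ one simply allows both sides of the displayed identity to equal $+\infty$ simultaneously, which is consistent with $M = \esssup\|u\|_{L^p}$ being allowed to be infinite; the argument above for the $\liminf$ bound never used finiteness, so no generality is lost.
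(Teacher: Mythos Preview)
Your proposal is correct and follows essentially the same route as the paper: both reduce the identity, via the duality \eqref{eq:corrmeasduality}, to the elementary fact $\|\Phi\|_{L^q(\mu)}\to\esssup_\mu\Phi$ as $q\to\infty$ applied to $\Phi(u)=\|u\|_{L^p}$. The paper simply cites this fact and writes the chain of equalities directly, whereas you spell out the two inequalities and are (commendably) more careful about justifying the use of duality for the unbounded integrand $|\xi_1|^p\cdots|\xi_k|^p$ via monotone approximation.
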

\begin{proof}
From the identity $\|f\|_{L^\infty(X;\mu)} = \lim_{k\to\infty}\|f\|_{L^k(X;\mu)}$, valid for any finite measure $\mu$, we get
\begin{align*}
\esssup_{u\in L^p} \|u\|_{L^p(D;U)}^p &= \lim_{k\to\infty}\left(\int_{L^p(D;U)} \|u\|_{L^p(D;U)}^{pk}\,d\mu(u)\right)^{1/k} \\
&= \lim_{k\to\infty}\left(\int_{L^p(D;U)} \int_{D^k} |u(x_1)|^p\cdots|u(x_k)|^p\,dx\,d\mu(u)\right)^{1/k} \\
&= \lim_{k\to\infty}\left(\int_{D^k} \Ypair{\nu^k_x}{|\xi_1|^p\cdots|\xi_k|^p}\,dx\right)^{1/k}.
\end{align*}
\end{proof}

\begin{definition}\label{def:corrmeasboundedsupp}
We let $\Corrmeas^p_b(D;U)$ denote the subset of correlation measures $\cm\in\Corrmeas^p(D;U)$ with bounded support, in the sense that there is an $M>0$ such that
\begin{equation}\label{eq:corrmeasboundedsupp}
\limsup_{k\to\infty} \left(\int_{D^k} \Ypair{\nu^k_x}{|\xi_1|^p\cdots|\xi_k|^p}\,dx\right)^{1/kp} \leq M.
\end{equation}
\end{definition}

\begin{definition}
If $\cm_n,\cm\in\Corrmeas^p(D;U)$ for $n\in\N$ then we say that $\cm_n$ converges weak* to $\cm$ as $n\to\infty$ (written $\cm_n\weaksto\cm$) if $\nu^k_n \weaksto \nu^k$ as $n\to\infty$, that is, if $\Ypair{\nu^k_n}{g}_{\Caratheodory^k}\to\Ypair{\nu^k}{g}_{\Caratheodory^k}$ for all $g\in\Caratheodory^k_0(D;U)$ and all $k\in\N$.

If $\cm_n,\cm\in\Corrmeas^p_b(D;U)$ for $n\in\N$ then we say that $(\cm_n)_{n\in\N}$ converges weakly to $\cm$ as $n\to\infty$ (written $\cm_n\weakto\cm$) if $\Ypair{\nu^k_n}{g}_{\Caratheodory^k}\to\Ypair{\nu^k}{g}_{\Caratheodory^k}$ for every $g\in\Caratheodory^{k,p}_1(D;U)$.
\end{definition}

Note that $\cm\in\Corrmeas^p_b$ implies that $\Ypair{\nu^k}{g}_{\Caratheodory^k}$ is well-defined and finite for any $g\in\Caratheodory^{k,p}$ (cf.~Definition \ref{def:caratheodory}).

We next show a compactness result which can be thought of as Kolmogorov's compactness theorem (cf.~\cite[Theorem A.5]{HR}) for correlation measures.

\begin{theorem}\label{thm:cmcompactness}
Let $\cm_n \in \Corrmeas^{p}(D;U)$ for $n=1,2,\dots$ be a sequence of correlation measures such that
\begin{align}
\sup_{n\in\N}\Ypair{\nu^1_n}{|\xi|^p}_{\Caratheodory^1} &\leq c^p	\label{eq:uniformlpbound} \\
\lim_{r\to0}\limsup_{n\to\infty}\omega_r^p\bigl(\nu^2_n\bigr) &= 0 \label{eq:uniformdc}
\end{align}
for some $c>0$ (where $\omega_r^p$ is defined in Definition \ref{def:correlationmeasure}(v)). Then there exists a subsequence $(n_j)_{j=1}^\infty$ and some $\cm\in\Corrmeas^{p}(D;U)$ such that
\begin{enumerate}[label=\it (\roman*)]
\item\label{prop:wsconv} $\cm_{n_j} \weaksto \cm$ as $j\to\infty$, that is, $\Ypair{\nu^k_{n_j}}{g}_{\Caratheodory^k} \to \Ypair{\nu^k}{g}_{\Caratheodory^k}$ for every $g\in\Caratheodory^k_0(D;U)$ and every $k\in\N$
\item\label{prop:lpbound} $\Ypair{\nu^1}{|\xi|^p}_{\Caratheodory^1} \leq c^p$
\item\label{prop:dc} $\omega_r^p(\nu^2) \leq \liminf_{n\to\infty} \omega_r^p(\nu^2_n)$ for every $r>0$
\item\label{prop:liminf} for $k\in\N$, let $\phi\in L^1_{\loc}(D^k)$ and $\kappa\in C(U^k)$ be nonnegative, and let $g(x,\xi) := \phi(x)\kappa(\xi)$. Then
\begin{equation}\label{eq:nuklimitbound}
\Ypair{\nu^k}{g}_{\Caratheodory^k} \leq \liminf_{j\to\infty} \Ypair{\nu_{n_j}^k}{g}_{\Caratheodory^k}.
\end{equation}
\item\label{prop:strongconv}
Assume moreover that the domain $D\subset\R^d$ is bounded and that $\cm_n$ have uniformly bounded support, in the sense that \eqref{eq:corrmeasboundedsupp} holds for all $\cm_n$ for a fixed $M>0$, or equivalently,
\begin{equation}
\|u\|_{L^p} \leq M \qquad \text{for $\mu_n$-a.e.\ } u\in L^p(D;U) \text{ for every } n\in\N. \label{eq:LpLinftybound}
\end{equation}
Then \emph{observables converge strongly:}
\begin{equation}\label{eq:nuklimit}
\lim_{j\to\infty}\int_{D^k} \big|\Ypair{\nu^k_{n_j,x}}{g(x)} - \Ypair{\nu^k_{x}}{g(x)}\big| \,dx = 0
\end{equation}
for every $g \in \Caratheodory^{k,p}_1(D;U)$. In particular, $\cm_{n_j} \weakto \cm$.
\end{enumerate}
\end{theorem}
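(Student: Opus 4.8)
The plan is to extract a weak* convergent subsequence via Banach--Alaoglu and a diagonal argument, identify the limit $\cm$ as an element of $\Corrmeas^p(D;U)$, and then establish the lower-semicontinuity statements and the strong convergence one item at a time, the last requiring the full strength of bounded support together with the uniform diagonal continuity \eqref{eq:uniformdc}. For the extraction: each $\nu^k_n$ lies in $\Caratheodory^{k*}_0(D;U)$, the dual of the separable Banach space $\Caratheodory^k_0(D;U)$, and since $\nu^k_{n,x}$ is a probability measure for a.e.\ $x$ its operator norm is $\leq 1$; Banach--Alaoglu and a diagonal argument over $k$ then give a subsequence $(n_j)$ with $\nu^k_{n_j}\weaksto\nu^k$ for every $k$, which is item \textit{(i)}. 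To see $\cm=(\nu^1,\nu^2,\dots)\in\Corrmeas^p$: nonnegativity of $\nu^k_x$ follows by testing against $g(x,\xi)=\phi(x)\kappa(\xi)$ with $\phi,\kappa\geq0$; symmetry passes to the limit by testing symmetric/antisymmetric combinations; consistency passes to the limit by testing functions $g(x,\xi)=\phi(x)f(\xi_1,\dots,\xi_{k-1})$ and integrating out $x_k$, which replaces $\phi$ by an $L^1(D^{k-1})$ weight. Unit mass of $\nu^1_x$ is the only point using the hypotheses: picking $\kappa_R\in C_0(U)$ with $0\leq\kappa_R\leq1$ and $1-\kappa_R(\xi)\leq R^{-p}|\xi|^p$, \eqref{eq:uniformlpbound} gives $\int_D\phi(x)(1-\Ypair{\nu^1_{n,x}}{\kappa_R})\,dx\leq R^{-p}\|\phi\|_{L^\infty}c^p$ for all $n$ and all nonnegative $\phi\in L^1\cap L^\infty$; letting $n\to\infty$ along the subsequence and then $R\to\infty$ forces $\nu^1_x(U)=1$ a.e., and consistency propagates this to all $\nu^k$.

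The lower-semicontinuity items \textit{(ii)--(iv)} all reduce to \textit{(iv)}. Given nonnegative $\phi\in L^1_\loc(D^k)$ and $\kappa\in C(U^k)$, write $\phi=\sup_\ell\phi_\ell$ with $\phi_\ell:=\min(\phi,\ell)\mathbbm{1}_{B_\ell}\in L^1\cap L^\infty(D^k)$ and $\kappa=\sup_m\kappa_m$ with $\kappa_m\in C_0(U^k)$ nonnegative and increasing; since $\phi_\ell\kappa_m\in\Caratheodory^k_0$ and $\phi_\ell\kappa_m\leq g$,
\[
\Ypair{\nu^k}{\phi_\ell\kappa_m}_{\Caratheodory^k}=\lim_{j\to\infty}\Ypair{\nu^k_{n_j}}{\phi_\ell\kappa_m}_{\Caratheodory^k}\leq\liminf_{j\to\infty}\Ypair{\nu^k_{n_j}}{g}_{\Caratheodory^k},
\]
and monotone convergence in $\ell$ and $m$ gives \eqref{eq:nuklimitbound}. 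Item \textit{(ii)} (i.e.\ \eqref{eq:corrlpbound}) is the case $k=1$, $g=|\xi|^p$, combined with \eqref{eq:uniformlpbound}; item \textit{(iii)} is the case $k=2$ with $\phi(x,y)=|B_r|^{-1}\mathbbm{1}_{\{|x-y|<r\}}\in L^1_\loc(D^2)$ and $\kappa(\xi)=|\xi_1-\xi_2|^p$, which yields $\omega^p_r(\nu^2)\leq\liminf_n\omega^p_r(\nu^2_n)$, and combined with \eqref{eq:uniformdc} this gives $\lim_{r\to0}\omega^p_r(\nu^2)=0$, the diagonal continuity of $\cm$. Thus $\cm\in\Corrmeas^p(D;U)$, completing \textit{(i)--(iv)}.

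For item \textit{(v)}, $D$ is bounded, so $L^p(D;U)$ is Polish and the $\mu_n\in\Prob(L^p)$ are all supported in $B:=\{\|u\|_{L^p}\leq M\}$ by \eqref{eq:LpLinftybound}. First I would promote the sequence to $\Prob(L^p)$: since $\omega^p_r(\nu^2_n)=\E_{\mu_n}\big[\int_D\intavg_{B_r(x)}|u(x)-u(y)|^p\,dy\,dx\big]$, Chebyshev together with \eqref{eq:uniformdc} and the Kolmogorov--Riesz compactness criterion show that $\{\mu_n\}$ (possibly after passing to a subsequence) is tight; Prokhorov's theorem then gives $\mu_{n_j}\weakto\mu$ in $\Prob(L^p)$, and testing $L_g$ with $g\in\Caratheodory^k_0$ against both \eqref{eq:corrmeasduality} and item \textit{(i)} identifies $\mu$ as the probability measure dual to $\cm$. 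Skorokhod's representation theorem then yields $U_j\sim\mu_{n_j}$, $U\sim\mu$ on a common probability space with $U_j\to U$ a.s.\ in $L^p(D;U)$ and $\|U_j\|_{L^p},\|U\|_{L^p}\leq M$. Using that $\nu^k_{n_j,x}$ is, for a.e.\ $x$, the law of $(U_j(x_1),\dots,U_j(x_k))$ (the disintegration underlying \Cref{thm:corrmeasduality}), Jensen and Tonelli give
\[
\int_{D^k}\big|\Ypair{\nu^k_{n_j,x}}{g(x)}-\Ypair{\nu^k_x}{g(x)}\big|\,dx\leq\E[\Delta_j],\qquad \Delta_j:=\int_{D^k}\big|g(x,U_j(x))-g(x,U(x))\big|\,dx .
\]
Since $U_j\to U$ in $L^p$ a.s.\ (hence $|U_j|^p\to|U|^p$ in $L^1(D)$), the growth bound \eqref{eq:gbounded} and Vitali's theorem give $\Delta_j\to0$ a.s. For uniform integrability of $\{\Delta_j\}$, bound $0\leq\Delta_j\leq L_{|g|}(U_j)+L_{|g|}(U)$; here $|g|\in\Caratheodory^{k,p}_1$ so $L_{|g|}$ is continuous and bounded on $B$ (\Cref{lem:lgcont}), whence $L_{|g|}(U_j)\to L_{|g|}(U)$ a.s., while $\mu_{n_j}\weakto\mu$ with supports in $B$ and \Cref{thm:weakconvergenceequivalence} give $\E[L_{|g|}(U_j)]=\Ypair{\nu^k_{n_j}}{|g|}_{\Caratheodory^k}\to\Ypair{\nu^k}{|g|}_{\Caratheodory^k}=\E[L_{|g|}(U)]$; Scheffé's lemma then makes $\{L_{|g|}(U_j)\}$ uniformly integrable, hence so is $\{\Delta_j\}$. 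Vitali's theorem gives $\E[\Delta_j]\to0$, which is \eqref{eq:nuklimit}, and \eqref{eq:nuklimit} trivially implies $\Ypair{\nu^k_{n_j}}{g}_{\Caratheodory^k}\to\Ypair{\nu^k}{g}_{\Caratheodory^k}$ for all $g\in\Caratheodory^{k,p}_1$, i.e.\ $\cm_{n_j}\weakto\cm$.

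The genuine obstacle is item \textit{(v)}: weak* convergence controls only the \emph{signed} integrals $\int_{D^k}(\Ypair{\nu^k_{n,x}}{g}-\Ypair{\nu^k_x}{g})\,dx$, not the integral of the absolute value, and inserting a measurable sign $x\mapsto\mathrm{sgn}(\cdots)$ into $g$ destroys the local Lipschitz structure \eqref{eq:glipschitz} that is required to test against. One therefore has to leave the correlation-measure picture entirely, use the diagonal-continuity hypothesis as a statistical equicontinuity condition to obtain tightness in $\Prob(L^p)$, and exploit that along a Skorokhod coupling the bounded support upgrades $U_j\to U$ to $|U_j|^p\to|U|^p$ in $L^1(D)$ --- the equi-integrability that the bare weak* topology does not see. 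Everything else (Banach--Alaoglu, the diagonal extraction, the monotone approximation by $\Caratheodory^k_0$ functions) is soft; the most delicate routine point is the bookkeeping in the tightness step, namely choosing radii $r_m\downarrow0$ and thresholds so that the exceptional sets $\{u:\text{Kolmogorov--Riesz modulus at scale }r_m\text{ is large}\}$ carry uniformly small $\mu_n$-mass.
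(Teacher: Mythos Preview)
Your treatment of items \textit{(i)}--\textit{(iv)} matches the paper's argument in outline. One point that is too quick: consistency does \emph{not} pass to the weak* limit by ``testing functions $g(x,\xi)=\phi(x)f(\xi_1,\dots,\xi_{k-1})$'', because such $g$ do not lie in $\Caratheodory^k_0(D;U)$ (they do not vanish as $|\xi_k|\to\infty$). The paper handles this by inserting a cutoff $\theta_R(\xi_k)$, splitting the error into four pieces, and using the uniform $L^p$ bound \eqref{eq:uniformlpbound} via Chebyshev to control the tail; your sketch needs the same repair. The unit-mass argument for $\nu^k_x$ has the same issue and is fixed the same way.

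For item \textit{(v)} you take a genuinely different route from the paper. The paper never leaves the correlation-measure side: it fixes $g\in\Caratheodory^{k,p}_1$, sets $u_n(x):=\Ypair{\nu^k_{n,x}}{g(x,\xi)}$, and shows directly that $\{u_n\}$ is precompact in $L^1(D^k)$ via Kolmogorov's compactness theorem for functions, the equicontinuity coming from \eqref{eq:uniformdc} through an estimate of the form $\int\intavg|u_n(x+z)-u_n(x)|\,dz\,dx\lesssim\omega^p_r(\nu^2_n)^{1/p}$. The identification of the $L^1$ limit with $\Ypair{\nu^k_x}{g(x,\xi)}$ then uses a cutoff $\theta_R(|\xi|^p)$ together with a pointwise lemma (their Lemma~B.1) to show $u_n^R\to u_n$ in $L^1$ \emph{uniformly in $n$}, which is where bounded support \eqref{eq:LpLinftybound} enters. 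Your approach instead lifts everything to $\Prob(L^p)$: tightness of $\{\mu_{n_j}\}$ via Kolmogorov--Riesz and Chebyshev, Prokhorov, Skorokhod coupling, and then Vitali/Scheff\'e on the coupled variables. This is correct and arguably more conceptual --- it makes transparent that \eqref{eq:uniformdc} is exactly statistical equicontinuity and that \eqref{eq:LpLinftybound} supplies the equi-integrability that upgrades a.s.\ convergence to $L^1$. The paper's approach buys self-containment (no Skorokhod, no Prokhorov) and works one observable $g$ at a time; yours establishes $\mu_{n_j}\weakto\mu$ once and then reads off \eqref{eq:nuklimit} for every $g$ simultaneously. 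The bookkeeping you flag in the tightness step (handling the finitely many $n$ excluded by the $\limsup$ in \eqref{eq:uniformdc} via individual diagonal continuity of each $\cm_n$) is indeed routine but should be written out.
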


The proof is given in Appendix \ref{app:proofcompactness}.
\begin{remark}
\eqref{eq:nuklimit} implies in particular that $\Ypair{\nu^k_{n_j}}{g}_{\Caratheodory^k} = \Ypair{\mu_{n_j}}{L_g}$ converges for any $g\in\Caratheodory^{k,p}_1$, where $\mu_n\in\Prob(L^p)$ is dual to $\cm_n$ (see Theorem~\ref{thm:corrmeasduality}). By Theorem~\ref{thm:weakconvergenceequivalence}, this is equivalent to saying that $\mu_{n_j}$ converges weakly to $\mu$. Since, by hypothesis, the $p$th moment of $\mu_n$ is uniformly bounded, the sequence $\mu_n$ converges to $\mu$ in the Wasserstein distance; see Definition \ref{def:wasserstein} and \cite[Chapter 7]{Vil1}.
\end{remark}

\begin{remark}
Theorem \ref{thm:cmcompactness} can most likely be extended to provide a complete characterization of compact subsets of $\Corrmeas^p(D;U)$. Since we only require sufficient conditions for compactness, we do not pursue this generalization here.
\end{remark}

\subsection{Time-parameterized probability measures on $L^p$}\label{sec:timedeppromeas}


Let $T\in (0,\infty]$. To take into account the evolutionary nature of the PDE \eqref{eq:cl}, we will add time-dependence to the probability measures considered in Section \ref{sec:probmeasLp} by considering maps $\mu : [0,T) \to \Prob(L^p(D;U))$.
Note the distinction between time-parametrized maps $\mu : [0,T)\to\Prob(L^p(D;U))$ and probability measures $\gamma$ on, say, the space $L^\infty([0,T); L^p(D;U))$. Every such measure $\gamma$ would correspond to a unique $\mu$, but not \emph{vice versa}; when ``projecting'' $\gamma$ onto $\mu$, any information about correlation between function values $u(t_1)$, $u(t_2)$ at different times $t_1$, $t_2$ is lost. Given the evolutionary nature of the PDE \eqref{eq:cl}, we have chosen to work with ``$\mu$'' measures in order to preserve the direction of time in the underlying PDE.  



\begin{notation}
We denote the set of Carath\'eodory functions depending on space and time by $\Caratheodory^k_0([0,T),D;U) := L^1([0,T)\times D^k;C_0(U^k))$ and its dual space by $\Caratheodory^{k*}_0([0,T),D;U) := L^\infty_w([0,T)\times D^k;\Meas(U^k))$.
\end{notation}
Analogously to Definition~\ref{def:caratheodory}, we let $\Caratheodory^{k,p}([0,T),D;U)$ denote the space of Carath\'eodory functions $g:[0,T)\times D^k\times U^k \to \R$ satisfying
	\begin{equation}\label{eq:gboundedt}
|g(t,x,\xi)| \leq \sum_{\alpha\in\{0,1\}^k} \phi_{|\bar{\alpha}|}(t,x_{\bar{\alpha}})|\xi^\alpha|^p \qquad \forall\ x\in D^k,\ \xi\in U^k
\end{equation}
for nonnegative functions $\phi_i\in L^\infty([0,T); L^1(D^i))$, $i=0,1,\dots,k$. We let $\Caratheodory^{k,p}_1([0,T),D;U)\subset \Caratheodory^{k,p}([0,T),D;U)$ denote the subspace of functions $g$ satisfying the local Lipschitz condition
\begin{equation}\label{eq:glipschitzt}
\begin{split}
\big|g(t,x,\zeta)-g(t,y,\xi)\big| \leq \psi(t)\sum_{i=1}^k|\zeta_i-\xi_i|\max\big(|\xi_i|,|\zeta_i|\big)^{p-1} h(t,\hat{x}^i,\hat{\xi}^i)
\end{split}
\end{equation}
for every $x\in D^k$, $y\in B_r(x)$ for some $r>0$, for some nonnegative $h \in \Caratheodory^{k-1,p}([0,T),D;U)$ and $0\leq \psi(t)\in L^{\infty}([0,T))$.

The following lemma shows that it is meaningful to ``evaluate'' an element $\nu^k \in \Caratheodory^{k*}_0([0,T),D;U)$ at (almost) any time $t\in[0,T)$.
 \begin{lemma}\label{lem:cmtimeslice}
Let $\nu^k \in \Caratheodory^{k*}_0([0,T),D;U)$. Then there exists a map $\rho:[0,T)\to\Caratheodory^{k*}_0(D;U)$, uniquely defined for a.e.~$t\in[0,T)$, such that $t \mapsto \Ypair{\rho(t)}{g}_{\Caratheodory^k_0}$ is measurable for all $g\in\Caratheodory^k_0(D;U)$, and
\[
\Ypair{\nu^k}{g}_{\Caratheodory^k} = \int_0^T \Ypair{\rho(t)}{g(t,\cdot)}_{\Caratheodory^k}\,dt \qquad \forall\ g\in\Caratheodory^k_0([0,T),D;U).
\] 
\end{lemma}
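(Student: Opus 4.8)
The plan is to recognize the asserted time-slicing as a special case of the Fubini theorem for Bochner--Lebesgue spaces, combined with the standard description of the dual of an $L^1$-space of vector-valued maps. Throughout, write $E:=\Caratheodory^k_0(D;U)=L^1(D^k;C_0(U^k))$, and note first that $E$ is a \emph{separable} Banach space: $D^k\subset\R^{dk}$ carries $\sigma$-finite Lebesgue measure with a separable measure algebra, and $C_0(U^k)=C_0(\R^{Nk})$ is separable, so $L^1(D^k;C_0(U^k))$ is separable. Since $[0,T)\times D^k$ is likewise $\sigma$-finite, the duality theorem quoted above (applied with $[0,T)\times D^k$ in place of $D^k$) identifies $\Caratheodory^{k*}_0([0,T),D;U)=L^\infty_w([0,T)\times D^k;\Meas(U^k))$ with the dual of $\Caratheodory^k_0([0,T),D;U)=L^1([0,T)\times D^k;C_0(U^k))$; in particular the given $\nu^k$ acts as a bounded linear functional on $\Caratheodory^k_0([0,T),D;U)$.

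First I would use the Bochner--Fubini isomorphism: the assignment $g\mapsto(t\mapsto g(t,\cdot))$ is an isometric isomorphism of $\Caratheodory^k_0([0,T),D;U)=L^1([0,T)\times D^k;C_0(U^k))$ onto $L^1([0,T);E)$. Next, because $E$ is separable, the dual of $L^1([0,T);E)$ is the space $L^\infty_{w*}([0,T);E^*)$ of (a.e.-classes of) essentially bounded, weak*-measurable maps $\rho:[0,T)\to E^*$ --- i.e.\ maps with $t\mapsto\Ypair{\rho(t)}{e}$ measurable for each $e\in E$ --- under the pairing $\rho\mapsto\int_0^T\Ypair{\rho(t)}{G(t)}\,dt$ for $G\in L^1([0,T);E)$; and $E^*=\Caratheodory^{k*}_0(D;U)$ by the same duality theorem for $D^k$. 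Composing these two isomorphisms, the functional $\nu^k$ corresponds to a unique $\rho\in L^\infty_{w*}([0,T);\Caratheodory^{k*}_0(D;U))$, and this $\rho$ is the desired map: for a.e.\ $t$ one has $\rho(t)\in\Caratheodory^{k*}_0(D;U)$ (with $\esssup_t\|\rho(t)\|$ controlled by $\|\nu^k\|$), weak*-measurability is precisely the measurability of $t\mapsto\Ypair{\rho(t)}{g}_{\Caratheodory^k_0}$ for each $g\in\Caratheodory^k_0(D;U)$, and unwinding the two isomorphisms yields, for every $g\in\Caratheodory^k_0([0,T),D;U)$,
\[
\Ypair{\nu^k}{g}_{\Caratheodory^k}=\int_0^T\Ypair{\rho(t)}{g(t,\cdot)}_{\Caratheodory^k}\,dt.
\]

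For uniqueness: if $\rho_1,\rho_2$ both satisfy the conclusion, subtract the two integral identities and test against functions of the form $g(t,x,\xi)=\Indicator{A}(t)\,h(x,\xi)$ with $A\subset[0,T)$ measurable of finite measure and $h\in\Caratheodory^k_0(D;U)$; this gives $\int_A\Ypair{\rho_1(t)-\rho_2(t)}{h}_{\Caratheodory^k_0}\,dt=0$ for all such $A$, hence $\Ypair{\rho_1(t)}{h}_{\Caratheodory^k_0}=\Ypair{\rho_2(t)}{h}_{\Caratheodory^k_0}$ for a.e.\ $t$. Taking a countable dense set $\{h_m\}\subset\Caratheodory^k_0(D;U)$ and the union over $m$ of the corresponding null sets, we conclude $\rho_1(t)=\rho_2(t)$ in $\Caratheodory^{k*}_0(D;U)$ for a.e.\ $t$.

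The one genuinely delicate point --- and the step I expect to require the most care --- is the representation of the dual of $L^1([0,T);E)$: the familiar identity $(L^1([0,T);E))^*\cong L^\infty([0,T);E^*)$ is valid only when $E^*$ has the Radon--Nikodym property, which $\Caratheodory^{k*}_0(D;U)=L^\infty_w(D^k;\Meas(U^k))$ does \emph{not}. The resolution is to use instead the weak*-measurable version $L^\infty_{w*}([0,T);E^*)$, which represents $(L^1([0,T);E))^*$ for an arbitrary Banach space $E$, with the separability of $E$ ensuring that weak*-measurability is the appropriate notion (and that the representing space is neither too small nor too large); one must then note that this weak*-measurability is exactly the pointwise-in-$g$ measurability claimed in the statement. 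A secondary, routine verification is that the Bochner--Fubini isomorphism is compatible with the two ways of presenting the dual, so that the duality pairing unwinds into the integral over $[0,T)$ with the pairing $\Ypair{\rho(t)}{g(t,\cdot)}_{\Caratheodory^k}$ as written.
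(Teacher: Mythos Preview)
Your proposal is correct but proceeds by a genuinely different route from the paper. The paper gives a hands-on construction: starting from the pointwise representation $(t,x)\mapsto\nu^k_{t,x}$ already furnished by $\nu^k\in L^\infty_w([0,T)\times D^k;\Meas(U^k))$, it fixes a countable dense set $E_0$ in the unit sphere of $\Caratheodory^k_0(D;U)$, lets $\mathcal{T}$ be the intersection over $g\in E_0$ of the Lebesgue-point sets of $t\mapsto\int_{D^k}\Ypair{\nu^k_{t,x}}{g(x)}\,dx$, and for $t\in\mathcal{T}$ sets $\rho(t)(g):=\int_{D^k}\Ypair{\nu^k_{t,x}}{g(x)}\,dx$, verifying linearity and boundedness directly via the Lebesgue-point limit and then extending by density. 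Your approach instead packages everything into two abstract isomorphisms --- Bochner--Fubini $L^1([0,T)\times D^k;C_0(U^k))\cong L^1([0,T);E)$ followed by the duality $(L^1([0,T);E))^*\cong L^\infty_{w*}([0,T);E^*)$ for separable $E$ --- and reads off $\rho$ as the representing element. Your route is cleaner and more conceptual, and makes transparent that the lemma is just the Ball-type duality theorem quoted in the paper applied one level up (with the separable space $E=\Caratheodory^k_0(D;U)$ in the role of $C_0(U^k)$); the paper's route is more self-contained, relying only on Lebesgue differentiation and density rather than on the general $L^\infty_{w*}$ representation, and as a byproduct exhibits $\rho(t)$ concretely as $g\mapsto\int_{D^k}\Ypair{\nu^k_{t,x}}{g(x)}\,dx$.
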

The proof of this lemma is given in Appendix \ref{app:time-dep-cm}. Henceforth we will not make distinctions between these two representations of elements of $\Caratheodory^{k*}_0([0,T),D;U)$, and denote them both by $\nu^k$.

\begin{definition}\label{def:timdeptrunccorrmeas}
A \emph{time-dependent correlation measure} is a collection $\cm=(\nu^1,\nu^2,\dots)$ of functions $\nu^k\in \Caratheodory^{k*}_0([0,T),D;U)$ such that
\begin{enumerate}[label=\it (\roman*)]
\item $(\nu^1_t,\nu^2_t,\dots) \in \Corrmeas^p(D;U)$ for a.e.~$t\in[0,T)$
\item \textit{$L^p$ integrability:}
\begin{equation}\label{eq:corrlpboundtimedep}
\esssup_{t\in[0,T)}\int_{D} \Ypair{\nu^1_{t,x}}{|\xi|^p}\,dx \leq c^p < +\infty
\end{equation}
\item\textit{Diagonal continuity (DC):}
\begin{equation}\label{eq:timedepdc}
\int_0^{T'}\omega_r^p\big(\nu^2_t\big)\,dt \to 0 \qquad \text{ as } r\to0 \text{ for all } T'\in(0,T)
\end{equation}
where $\omega_r^p$ was defined in \eqref{eq:defmodcont}.
\end{enumerate}
We denote the set of all time-dependent correlation measures by $\Corrmeas^{p}([0,T),D;U)$.
\end{definition}

\begin{remark}
By Lemma \ref{lem:cmtimeslice}, the objects $\nu^k_t$ are well-defined for a.e.~$t\in[0,T)$. Assertion (ii) requires that the $L^p$ bound should be uniform in $t$, and assertion (iii) requires that the modulus of continuity in the diagonal continuity requirement should be integrable in $t$.
\end{remark}

Next, we prove a time-dependent version of the duality result Theorem \ref{thm:corrmeasduality}. 
\begin{theorem}
For every time-dependent correlation measure $\cm\in\Corrmeas^p([0,T),D;U)$ there is a unique {\rm(}up to subsets of $[0,T)$ of Lebesgue measure $0${\rm)} map $\mu:[0,T)\to\Prob(L^p(D;U))$ such that
\begin{enumerate}[label=\it (\roman*)]
\item the map
\begin{equation}\label{eq:mumeasurable}
t\mapsto\Ypair{\mu_t}{L_g} = \int_{L^p} \int_{D^k}g(x,u(x))\,dxd\mu_t(u)
\end{equation}
is measurable for all $g\in\Caratheodory^k_0(D;U)$,
\item $\mu$ is $L^p$-bounded:
\begin{equation}\label{eq:mulpbound}
\esssup_{t\in[0,T)}\int_{L^p}\|u\|_{L^p}^p\,d\mu_t(u) \leq c^p < \infty
\end{equation}
\item $\mu$ is dual to $\cm$: the identity
\begin{equation}\label{eq:corrmeastimeduality}
\int_{D^k}\Ypair{\nu_t^k}{g(x)}\,dx = \int_{L^p} \int_{D^k}g(x,u(x))\,dxd\mu_t(u)
\end{equation}
holds for a.e.~$t\in[0,T)$, every $g\in\Caratheodory^k_0(D;U)$ and all $k\in\N$. 
\end{enumerate}
Conversely, for every $\mu:[0,T)\to\Prob(L^p(D;U))$
 satisfying (i) and (ii), there is a unique correlation measure $\cm\in\Corrmeas^p([0,T),D;U)$ satisfying (iii).
\end{theorem}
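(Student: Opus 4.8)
The plan is to reduce this time-dependent duality to the static duality Theorem~\ref{thm:corrmeasduality} by slicing in time via Lemma~\ref{lem:cmtimeslice}, and then to lift the resulting pointwise-in-$t$ statements to the measurability, boundedness and duality assertions (i)--(iii). Throughout, the main bookkeeping device will be the separability of $\Caratheodory^k_0(D;U)=L^1(D^k;C_0(U^k))$, which lets us consolidate the countably many $g$- and $k$-dependent Lebesgue-null exceptional sets into a single null set $N\subset[0,T)$; on test functions we will repeatedly use the continuity estimate $|L_g(u)|\leq\int_{D^k}\|g(x,\cdot)\|_{C_0(U^k)}\,dx$, valid for any $u$ since the $\mu_t$ are probability measures.

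\emph{From $\cm$ to $\mu$.} Given $\cm=(\nu^1,\nu^2,\dots)\in\Corrmeas^p([0,T),D;U)$, Lemma~\ref{lem:cmtimeslice} produces the time-slices $\nu^k_t\in\Caratheodory^{k*}_0(D;U)$, and by Definition~\ref{def:timdeptrunccorrmeas}(i)--(ii) we have $(\nu^1_t,\nu^2_t,\dots)\in\Corrmeas^p(D;U)$ with $\int_D\Ypair{\nu^1_{t,x}}{|\xi|^p}\,dx\leq c^p$ for all $t\notin N$. Theorem~\ref{thm:corrmeasduality} then yields, for each such $t$, a unique $\mu_t\in\Prob(L^p(D;U))$ with finite $p$-th moment dual to $(\nu^1_t,\nu^2_t,\dots)$; I set $\mu_t$ equal to a fixed Dirac mass on $N$. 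Property (ii) follows from the static duality applied to $g(x,\xi)=|\xi|^p$ (extended to $\Caratheodory^{1,p}$, cf.\ the remark after Theorem~\ref{thm:corrmeasduality}), giving $\int_{L^p}\|u\|_{L^p}^p\,d\mu_t(u)=\int_D\Ypair{\nu^1_{t,x}}{|\xi|^p}\,dx\leq c^p$. For (iii), the identity $\Ypair{\mu_t}{L_g}=\Ypair{\nu^k_t}{g}_{\Caratheodory^k}$ holds for all $t\notin N$ and all $g\in\Caratheodory^k_0(D;U)$ by construction; since $t\mapsto\Ypair{\nu^k_t}{g}_{\Caratheodory^k}$ is measurable by Lemma~\ref{lem:cmtimeslice}, this also yields (i). Uniqueness of $\mu$ up to null sets is then immediate: for any two admissible $\mu,\tilde\mu$, (iii) forces $\Ypair{\mu_t}{L_g}=\Ypair{\tilde\mu_t}{L_g}$ for a.e.\ $t$ and every $g$ in a countable dense subset of $\Caratheodory^k_0$, hence for all $g$ by continuity, so $\mu_t$ and $\tilde\mu_t$ share the same correlation marginals and agree by the uniqueness half of Theorem~\ref{thm:corrmeasduality}.

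\emph{From $\mu$ to $\cm$.} Given $\mu$ satisfying (i)--(ii), I define $\Lambda^k:\Caratheodory^k_0([0,T),D;U)\to\R$ by $\Lambda^k(g):=\int_0^T\Ypair{\mu_t}{L_{g(t,\cdot)}}\,dt$. For $g$ a finite sum of products $a(t)\psi(x)\varphi(\xi)$ the integrand is measurable in $t$ by (i); a general $g\in L^1([0,T)\times D^k;C_0(U^k))$ is an $L^1$-limit of such, and since $|L_{g(t,\cdot)}(u)|\leq\int_{D^k}\|g(t,x,\cdot)\|_{C_0(U^k)}\,dx$ uniformly in $u$, the limit $t\mapsto\Ypair{\mu_t}{L_{g(t,\cdot)}}$ is measurable and $|\Lambda^k(g)|\leq\|g\|_{L^1([0,T)\times D^k;C_0(U^k))}$. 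Thus $\Lambda^k$ is a bounded linear functional, so by the duality theorem (the same result with domain $[0,T)\times D^k$ in place of $D^k$) there is $\nu^k\in\Caratheodory^{k*}_0([0,T),D;U)$ representing it. Comparing $\int_0^T\Ypair{\nu^k_t}{g(t,\cdot)}_{\Caratheodory^k}\,dt=\int_0^T\Ypair{\mu_t}{L_{g(t,\cdot)}}\,dt$ for all $g$ (Lemma~\ref{lem:cmtimeslice} together with the density argument) forces $\Ypair{\nu^k_t}{h}_{\Caratheodory^k}=\Ypair{\mu_t}{L_h}$ for a.e.\ $t$ and all $h\in\Caratheodory^k_0(D;U)$, i.e.\ $\nu^k_t$ is the $k$-th correlation marginal of $\mu_t$ (which has finite $p$-th moment for a.e.\ $t$ by (ii), so that Theorem~\ref{thm:corrmeasduality} applies). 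Hence $(\nu^1_t,\nu^2_t,\dots)\in\Corrmeas^p(D;U)$ for a.e.\ $t$ (Definition~\ref{def:timdeptrunccorrmeas}(i)), the duality (iii) holds, and Definition~\ref{def:timdeptrunccorrmeas}(ii) follows from assumption (ii) via $\int_D\Ypair{\nu^1_{t,x}}{|\xi|^p}\,dx=\int_{L^p}\|u\|_{L^p}^p\,d\mu_t(u)\leq c^p$. Uniqueness of $\cm$ follows from static uniqueness of each time-slice and the fact that an element of $\Caratheodory^{k*}_0([0,T),D;U)$ is determined by its time-slices.

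\emph{Main obstacle.} Beyond the static theory and the routine measurability bookkeeping, the one genuinely new ingredient is verifying the time-dependent diagonal continuity, Definition~\ref{def:timdeptrunccorrmeas}(iii), in the converse direction. For a.e.\ $t$, static diagonal continuity (Definition~\ref{def:correlationmeasure}(v)) gives $\omega_r^p(\nu^2_t)\to0$ as $r\to0$; moreover, using the consistency relation $\Ypair{\nu^2_{t,x,y}}{|\xi_1|^p}=\Ypair{\nu^1_{t,x}}{|\xi|^p}$, the bound $|\xi_1-\xi_2|^p\leq 2^{p-1}(|\xi_1|^p+|\xi_2|^p)$, and Fubini's theorem, one gets the $t$-uniform estimate $\omega_r^p(\nu^2_t)\leq 2^p\int_D\Ypair{\nu^1_{t,x}}{|\xi|^p}\,dx\leq 2^pc^p$ for all $r>0$. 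Since $t\mapsto\omega_r^p(\nu^2_t)$ is measurable (approximate $|\xi_1-\xi_2|^p$ monotonically by $C_0(U^2)$ functions and use the weak* measurability of $\nu^2$, as in Lemma~\ref{lem:cmtimeslice}) and $[0,T')$ has finite measure, dominated convergence gives $\int_0^{T'}\omega_r^p(\nu^2_t)\,dt\to0$ as $r\to0$ for every $T'\in(0,T)$, as required. Producing this uniform dominating bound and justifying the measurability of $t\mapsto\omega_r^p(\nu^2_t)$ is the step I expect to require the most care; in the forward direction the condition is already part of the hypothesis on $\cm$, so nothing new is needed there.
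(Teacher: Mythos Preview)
Your proposal is correct and follows essentially the same route as the paper: reduce to the static duality Theorem~\ref{thm:corrmeasduality} via the time-slicing Lemma~\ref{lem:cmtimeslice}, build $\nu^k$ as a bounded linear functional on $L^1([0,T)\times D^k;C_0(U^k))$ (the paper does this on tensor products $\theta\otimes g$ before extending, whereas you go directly to general $g$), and then deduce time-dependent diagonal continuity from the pointwise-in-$t$ convergence $\omega_r^p(\nu^2_t)\to0$ together with dominated convergence. Your treatment is in fact more explicit than the paper's on the dominating bound $\omega_r^p(\nu^2_t)\leq 2^p c^p$, on the measurability of $t\mapsto\omega_r^p(\nu^2_t)$, and on the uniqueness half via separability of $\Caratheodory^k_0$, all of which the paper leaves implicit.
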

\begin{proof}
Let $\cm$ be given. Then for a.e.~$t\in[0,T)$ we have $\cm_t:=(\nu^1_t,\nu^2_t,\dots)\in \Corrmeas^p(D;U)$, so by Theorem \ref{thm:corrmeasduality} there exists a unique $\mu_t\in\Prob(L^p(D;U))$ that is dual to $\cm_t$, in the sense that (iii) holds. From the previous remark we know that $t\mapsto\Ypair{\nu^k_t}{g}_{\Caratheodory^k}$ is measurable for every $g\in\Caratheodory^k_0(D;U)$, which (using (iii)) is precisely (i). Property (ii) follows by approximating $\xi\mapsto|\xi|^p$ by functions in $C_0(U)$.

Conversely, given $\mu$ satisfying (i) and (ii), Theorem \ref{thm:corrmeasduality} gives, for a.e.~$t\in[0,T)$, the existence and uniqueness of $\cm_t\in\Corrmeas^p(D;U)$ satisfying (iii) as well as the $L^p$-bound \eqref{eq:corrlpboundtimedep}. We claim that $(\cm_t)_{t\in[0,T)}$ defines a time-dependent correlation measure $\cm\in\Corrmeas^p([0,T),D;U)$. Indeed, define the linear functional $\nu^k$ by
\[
\Ypair{\nu^k}{\theta\otimes g}_{\Caratheodory^k} := \int_0^T \theta(t)\Ypair{\nu^k_t}{g}_{\Caratheodory^k}\,dt \qquad \forall\ \theta\in L^1([0,T)),\ g\in \Caratheodory^k_0(D;U),\ k\in\N.
\]
Then $\nu^k$ is well-defined on tensor product test functions $\theta(t)g(x)$, and
\begin{align*}
\big|\Ypair{\nu^k}{\theta\otimes g}_{\Caratheodory^k}\big| &\leq \|\theta\|_{L^1([0,T))}\big\|\Ypair{\mu_\cdot}{L_g}\big\|_{L^\infty([0,T))}
\leq \|\theta\|_{L^1}\|L_g\|_{C^0(L^p)} \\
&= \|\theta\|_{L^1}\|g\|_{\Caratheodory^k_0}
= \|\theta \otimes g\|_{L^1([0,T)\times D^k;C_0(U^k))}.
\end{align*}
Extending $\nu^k$ by linearity to all of $L^1([0,T)\times D^k;C_0(U^k))$ produces a unique element $\nu^k \in L^1([0,T)\times D^k;C_0(U^k))^* \cong L^\infty_w([0,T)\times D^k;\Meas(U^k))$. Defining the collection $\cm = (\nu^1,\nu^2,\dots)$, it only remains to show that $\nu^2$ satisfies the diagonal continuity requirement \eqref{eq:timedepdc}. Indeed, since 
\[
\omega_r^p\big(\nu^2_t\big) = \int_{L^p}\omega_r^p(u)\,d\mu_t(u) \to 0 \qquad \text{as } r\to0
\]
for a.e.~$t\in[0,T)$, the requirement \eqref{eq:timedepdc} follows from the dominated convergence theorem.
\end{proof}

We denote the set of all maps $\mu:[0,T)\to\Prob(L^p(D;U))$ that are dual to some $\cm\in\Corrmeas^{p}([0,T),D;U)$ as $\Prob_T(L^p(D;U))$.

We conclude this section by proving a version of the compactness theorem for time-dependent correlation measures.

\begin{theorem}\label{thm:timedepcmcompactness}
Let $\cm_n \in \Corrmeas^{p}([0,T),D;U)$ for $n=1,2,\dots$ be a sequence of correlation measures such that
\begin{align}
\sup_{n\in\N}\esssup_{t\in[0,T)}\int_{D} \Ypair{\nu^1_{n;t,x}}{|\xi|^p}\,dx \leq c^p < +\infty	\label{eq:timedepuniformlpbound} \\
\lim_{r\to0}\limsup_{n\to\infty}\int_0^{T'}\omega_r^p\bigl(\nu^2_{n,t}\bigr)\,dt = 0 \label{eq:timedepuniformdc}
\end{align}
for some $c>0$ and all $T'\in[0,T)$. Then there exists a subsequence $(n_j)_{j=1}^\infty$ and some $\cm\in\Corrmeas^{p}([0,T),D;U)$ such that
\begin{enumerate}[label=\it (\roman*)]
\item\label{prop:tdwsconv} $\cm_{n_j} \weaksto \cm$ as $j\to\infty$, that is, $\Ypair{\nu^k_{n_j}}{g}_{\Caratheodory^k} \to \Ypair{\nu^k}{g}_{\Caratheodory^k}$ for every $g\in\Caratheodory^k_0([0,T),D;U)$ and every $k\in\N$
\item\label{prop:tdlpbound} $\Ypair{\nu^1_t}{|\xi|^p}_{\Caratheodory^1} \leq c^p$ for a.e.~$t\in[0,T)$
\item\label{prop:tddc} $\int_0^{T'}\omega_r^p\big(\nu^2_t\big)\,dt \leq \liminf_{n\to\infty} \int_0^{T'}\omega_r^p\big(\nu^2_{n,t}\big)\,dt$ for every $r>0$ and $T'\in[0,T)$
\item\label{prop:tdliminf} for $k\in\N$, let $\phi\in L^1_{\textrm{loc}}([0,T)\times D^k)$ and $\kappa\in C(U^k)$ be nonnegative, and let $g(t,x,\xi) := \phi(t,x)\kappa(\xi)$. Then
\begin{equation}\label{eq:tdnuklimitbound}
\Ypair{\nu^k}{g}_{\Caratheodory^k} \leq \liminf_{j\to\infty} \Ypair{\nu_{n_j}^k}{g}_{\Caratheodory^k}.
\end{equation}
\item \label{prop:strongconv_time}
Assume moreover that $D\subset\R^d$ is bounded, $T<\infty$ and that $\cm_n$ have uniformly bounded support, in the sense that
\begin{equation}
\|u\|_{L^p} \leq M \qquad \text{for $\mu^n_t$-a.e.\ } u\in L^p(D;U) \text{ for every } n\in\N, ~ a.e~t \in (0,T),  \label{eq:tLpLinftybound}
\end{equation}
with $\mu_t^n\in \Prob_T(L^p(D;U))$ being dual to $\cm_n$,
then the following \emph{observables converge strongly:}
\begin{equation}\label{eq:tnuklimit}
\lim_{j\to\infty}\int_{D^k} \left|\int_0^T\Ypair{\nu^k_{n_j;t,x}-\nu^k_{t,x}}{g(t,x)} \,dt\right| \,dx = 0
\end{equation}
for every $g \in \Caratheodory^{k,p}_1([0,T),D;U)$.
\end{enumerate}
\end{theorem}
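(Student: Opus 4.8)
The plan is to follow the proof of Theorem~\ref{thm:cmcompactness}, treating $t$ as an extra ``dummy'' coordinate and using Lemma~\ref{lem:cmtimeslice} to pass between a space--time functional $\nu^k$ and its time slices $\nu^k_t$. First I would extract the subsequence. Each $\nu^k_n$ lies in $\Caratheodory^{k*}_0([0,T),D;U)$, the dual of the separable Banach space $\Caratheodory^k_0([0,T),D;U) = L^1([0,T)\times D^k; C_0(U^k))$, and $\|\nu^k_n\|_{\Caratheodory^{k*}_0}\leq1$ because $\nu^k_{n;t,x}$ is a probability measure for a.e.\ $(t,x)$. By the sequential Banach--Alaoglu theorem together with a diagonal argument over $k\in\N$, there is a single subsequence $(n_j)_{j\in\N}$ and functionals $\nu^k\in\Caratheodory^{k*}_0([0,T),D;U)$ with $\nu^k_{n_j}\weaksto\nu^k$ for every $k$; this is the weak* convergence asserted in part~(i) of the theorem.

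Next I would check that $\cm=(\nu^1,\nu^2,\dots)$ is a time-dependent correlation measure. The symmetry and consistency identities of Definition~\ref{def:correlationmeasure} are linear relations tested against $C_0$-functions, so they pass to the weak* limit at the level of the space--time objects; restricting to tensor-product test functions $\theta(t)g(x,\xi)$ with $\theta$, $g$ ranging over countable dense subsets of $L^1([0,T))$ and $\Caratheodory^k_0(D;U)$, we obtain that $\cm_t=(\nu^1_t,\nu^2_t,\dots)$ satisfies symmetry and consistency for a.e.\ $t$. That each $\nu^k_{t,x}$ is a genuine probability measure follows, as in Theorem~\ref{thm:cmcompactness}, from tightness of $\{\nu^1_n\}$ forced locally in $x$ by the uniform bound \eqref{eq:timedepuniformlpbound}, with full mass propagated to every $k$ through the consistency relation. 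For the $L^p$ bound: given nonnegative $\theta\in L^1([0,T))$, approximating $|\xi|^p$ from below by suitable nonnegative elements of $\Caratheodory^1_0([0,T),D;U)$ and using weak* convergence followed by monotone convergence gives $\int_0^T\theta(t)\Ypair{\nu^1_t}{|\xi|^p}\,dt \leq \liminf_j\int_0^T\theta(t)\Ypair{\nu^1_{n_j,t}}{|\xi|^p}\,dt \leq c^p\|\theta\|_{L^1}$, whence $\Ypair{\nu^1_t}{|\xi|^p}\leq c^p$ for a.e.\ $t$ --- this is part~(ii) and the $L^p$-integrability in Definition~\ref{def:timdeptrunccorrmeas}. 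The same lower-semicontinuity-through-truncation scheme, applied to $g(t,x,\xi)=\phi(t,x)\kappa(\xi)$ with $\phi,\kappa\geq0$, yields \eqref{eq:tdnuklimitbound}, i.e.\ part~(iv); specializing to $\kappa(\xi)=|\xi_1-\xi_2|^p$ and to averages over balls gives part~(iii), and combining part~(iii) with the hypothesis \eqref{eq:timedepuniformdc} shows $\int_0^{T'}\omega_r^p(\nu^2_t)\,dt\to0$ as $r\to0$, the integrated diagonal continuity in Definition~\ref{def:timdeptrunccorrmeas}. The remaining, pointwise-in-$t$ diagonal continuity required there (so that $\cm_t\in\Corrmeas^p(D;U)$ for a.e.\ $t$) is deduced from the integrated version by the same elementary argument as in Theorem~\ref{thm:cmcompactness} --- extracting $r_m\downarrow0$ along which $\omega_{r_m}^p(\nu^2_t)\to0$ for a.e.\ $t$ and upgrading to the full limit $r\to0$. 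This shows $\cm\in\Corrmeas^p([0,T),D;U)$.

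For the strong-convergence statement in part~(v) I would argue as in the corresponding part of Theorem~\ref{thm:cmcompactness}. Under the uniform support bound \eqref{eq:tLpLinftybound} with $D$ bounded and $T<\infty$, put $G_n(x):=\int_0^T\Ypair{\nu^k_{n;t,x}}{g(t,x,\cdot)}\,dt$ for $g\in\Caratheodory^{k,p}_1([0,T),D;U)$, and let $G$ be the analogous expression for $\cm$; the goal is $G_{n_j}\to G$ in $L^1(D^k)$. Boundedness of $(G_n)$ in $L^1(D^k)$ and tightness (since $D$ is bounded) are immediate, and the crucial input is equicontinuity of spatial translations, $\sup_n\|G_n(\cdot+h)-G_n(\cdot)\|_{L^1(D^k)}\to0$ as $h\to0$: writing the increment coordinate by coordinate and using the local Lipschitz bound \eqref{eq:glipschitzt} together with symmetry and consistency, the increment in the $i$-th coordinate is controlled by a second-order quantity of the form $\int_0^T\omega_{|h|}^p(\nu^2_{n,t})\,dt$ times $L^p$-type factors, which tends to $0$ uniformly in $n$ by \eqref{eq:timedepuniformdc}. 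The Kolmogorov--Riesz compactness criterion then yields relative $L^1(D^k)$-compactness of $(G_{n_j})$, and since part~(i) already pins down the only possible limit as $G$ --- here \eqref{eq:tLpLinftybound} is used to replace $g$ by an element of $\Caratheodory^k_0$ on the relevant, essentially bounded, $\xi$-range --- we conclude $G_{n_j}\to G$ in $L^1(D^k)$, which is \eqref{eq:tnuklimit}.

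The step I expect to be the main obstacle is precisely this last one: upgrading the weak* (distributional-in-$x$) convergence of part~(i) to norm convergence in $L^1(D^k)$ requires a genuinely quantitative translation-equicontinuity estimate, and the delicate point is to make it uniform in $n$ while only the time-\emph{integrated} modulus of continuity is under control and while $g$ is merely locally Lipschitz with $p$-growth rather than bounded. A secondary, bookkeeping-type difficulty that recurs throughout is descending from ``integrated in $t$'' statements to ``a.e.\ $t$'' statements --- most visibly when passing from integrated to pointwise diagonal continuity --- which must be done via countable exhaustion and Borel--Cantelli-type arguments rather than by a direct passage to the limit.
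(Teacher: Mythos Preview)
Your proposal is correct and takes essentially the same approach as the paper: the authors in fact omit the proof entirely, stating only that it ``is very similar to that of Theorem~\ref{thm:cmcompactness},'' which is precisely the strategy you outline --- treating $t$ as an additional dummy variable, using Lemma~\ref{lem:cmtimeslice} to pass to time slices, and reproducing each step of the time-independent argument (Banach--Alaoglu plus diagonal extraction, lower semicontinuity via truncation for parts~(ii)--(iv), and Kolmogorov--Riesz compactness for part~(v)). Your identification of the main obstacle is accurate and matches the most delicate part of the proof of Theorem~\ref{thm:cmcompactness}(v) in Appendix~\ref{app:proofcompactness}.
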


We skip the proof of this theorem as is very similar to that of Theorem \ref{thm:cmcompactness}.
\begin{remark}
A closer look at the convergence statement \eqref{eq:tnuklimit} reveals that we can expect pointwise a.e convergence in space of the ensemble averages of the observable $g \in \Caratheodory^{k,p}_1([0,T),D;U)$. On the other hand, time averaging in \eqref{eq:tnuklimit} seems essential. In other words, we have convergence of time averages of ensemble averages of the observables. 
\end{remark}


\subsection{Statistical solutions}\label{sec:statsoln}
Using correlation measures we can now define statistical solutions of \eqref{eq:cl}. We need the following assumptions on the flux function in \eqref{eq:cl},
\begin{equation}
\label{eq:aflux}
\begin{aligned}
|f(u)| &\leq C(1+|u|^p) \qquad \forall\ u \in U, \\
|f(u) - f(v)| &\leq C|u-v|\max\left(|u|,|v|\right)^{p-1}, \qquad \forall\ u,v \in U.
\end{aligned}
\end{equation}
for some constant $C > 0$ and $1 \leq p < \infty$. The value of $p$ is given by available \emph{a priori} bounds for solutions of \eqref{eq:cl}, for instance, from the entropy condition \eqref{eq:entrcond} (cf.~\eqref{eq:lpbound}). For example, both the shallow water equations and the isentropic Euler equations are $L^2$-bounded, at least for solutions away from vacuum \cite{Dafermos}.

Statistical solutions are correlation measures (or equivalently, probability measures over $L^p$) satisfying the differential equation \eqref{eq:cl} in a certain averaged sense. The full derivation can be found in \cite{FLM17}, and we only provide the definition here.
\begin{definition}
\label{def:statsoln}
Let $\bar{\mu}\in\Prob\big(L^p\big(D;U)\big)$ have bounded support,
\begin{equation}\label{eq:fastdecay}
\|u\|_{L^p(D;U)} \leq M \qquad \text{for $\bar{\mu}$-a.e. } u\in L^p(D;U)
\end{equation}
for some $M>0$. A \emph{statistical solution} of \eqref{eq:cl} with initial data $\bar{\mu}$ is a time-dependent map $\mu:[0,T) \mapsto \Prob(L^p(D;U))$ such that each $\mu_t$ has bounded support, and such that the corresponding correlation measures $(\nu^k_t)_{k\in\N}$ satisfy 
\begin{equation}\label{eq:momentcorrmeas}
\partial_t \Ypair{\nu^k_{t,x}}{\xi_1\otimes\cdots\otimes\xi_k} + \sum_{i=1}^k \nabla_{x_i}\cdot\Ypair{\nu^k_{t,x}}{\xi_1\otimes\cdots\otimes f(\xi_i)\otimes\cdots\otimes\xi_k} = 0
\end{equation}
in the sense of distributions, i.e.,
\begin{align*}
\int_{\R_+}\int_{D^k} \Ypair{\nu^k_{t,x}}{\xi_1\otimes\cdots\otimes\xi_k}:\partial_t \phi + \sum_{i=1}^k \Ypair{\nu^k_{t,x}}{\xi_1\otimes\cdots\otimes f(\xi_i)\otimes\cdots\otimes\xi_k}: \nabla_{x_i}\phi\,dxdt \\
+ \int_{D^k}\Ypair{\bar{\nu}^k_x}{\xi_1\otimes\cdots\otimes\xi_k}:\phi\bigr|_{t=0}\,dx = 0
\end{align*}
for every $\phi\in C_c^\infty\big(D^k\times\R_+,\ U^{\otimes k}\big)$ and for every $k\in\N$. (Here, $\bar{\cm}$ denotes the correlation measure associated with the initial probability measure $\bar{\mu}$.)
\end{definition}
\begin{remark}
If the initial data $\bar{\mu}$ and a resulting statistical solution $\mu_t$ are both atomic, i.e.~$\bar{\mu} = \delta_{\bar{u}}$ and $\mu_t = \delta_{u}$ with $\bar{u} \in L^p(D;U)$ and $u \in L^p((0,T) \times D; U)$, then it is easy to see that a statistical solution in the above sense reduces to a weak solution of \eqref{eq:cl}. Thus, weak solutions are statistical solutions. 
\end{remark}

\begin{remark}
The evolution equation for the first correlation marginal of the statistical solution, i.e.,~for $k=1$ in \eqref{eq:momentcorrmeas}, is equivalent to the definition of a measure-valued solution of \eqref{eq:cl} \cite{diperna,fkmt}. Thus, a statistical solution can be thought of as a measure-valued solution augmented with information about all possible multi-point correlations. Hence, \emph{a priori}, a statistical solution contains significantly more information than a measure-valued solution. 
\end{remark}

\section{Dissipative Statistical solutions and weak-strong uniqueness}
\label{sec:3}
In analogy with weak solutions, it is necessary to impose additional admissibility criteria for statistical solutions in order to ensure uniqueness and stability. In \cite{FLM17}, the authors proposed an entropy condition for statistical solutions of scalar conservation laws. This condition was based on a non-trivial generalization of the Kruzkhov entropy condition to the framework of time-parameterized probability measures on $L^1(D)$. It was shown in \cite{FLM17} that these \emph{entropy statistical solutions} were unique and stable in the 1-Wasserstein metric on $\Prob(L^1(D))$, with respect to perturbations of the initial data.

Although one can extend the entropy condition of \cite{FLM17} to statistical solutions for systems of conservation laws \eqref{eq:cl}, it is not possible to obtain uniqueness and stability of such entropy statistical solutions. Instead, one has to seek alternative notions of stability for systems of conservation laws. 

A possible weaker framework for uniqueness (stability) is that of \emph{weak-strong uniqueness}, see~\cite{Wiedemann_ws,Dafermos} and references therein. Within this framework, one imposes certain \emph{entropy conditions} and proves that the resulting entropy solutions will coincide with a strong (classical) solution if such a solution exists. Weak-strong uniqueness for systems of conservation laws with strictly convex entropy functions is shown in~\cite{Dafermos}. In fact, one can even prove weak-strong uniqueness results for the much weaker notion of entropy or dissipative measure-valued solutions of systems of conservation laws, see~\cite{DST1,BDS1,fkmt}. 

Our aim in this section is to propose a suitable notion of \emph{dissipative statistical solution}s and prove a \emph{weak-strong} uniqueness result for such solutions.  Stability of solutions will be measured in the \emph{Wasserstein distance}, whose definition we recall first.
\begin{definition}
\label{def:wasserstein}
Let $X$ be a separable Banach space and let $\mu, \rho\in\Prob(X)$ have finite $p$th moments, i.e.\ $\int_X |x|^p d\mu(x) < \infty$ and $\int_X |x|^p d\rho(x) < \infty$. The $p$-Wasserstein distance between $\mu$ and $\rho$ is defined as
\begin{equation}\label{eq:wasserdef}
W_p(\mu,\rho) = \left(\inf_{\pi\in\Pi(\mu,\rho)}\int_{X^2} |x-y|^p\,d\pi(x,y)\right)^{\frac{1}{p}};
\end{equation}
where the infimum is taken over the set $\Pi(\mu,\rho)\subset\Prob(X^2)$ of all transport plans from $\mu$ to $\rho$, i.e.\ those $\pi\in\Prob(X^2)$ satisfying
\[
\int_{X^2} F(x)+G(y)\,d\pi(x,y) = \int_X F(x)\,d\mu(x) + \int_X G(y)\,d\rho(y) \qquad \forall\ F,G\in C_b(X)
\]
(see e.g.~\cite{Vil1}).
\end{definition}

As in \cite[Section 4]{FLM17}, our \emph{entropy condition} for statistical solutions will rely on a comparison with probability measures that are convex combinations of Dirac masses, i.e.~$\rho \in \Prob(L^2(D))$ such that  $\rho = \sum_{i=1}^M \alpha_i \delta_{u_i}$ for coefficients $\alpha_i\geq0$, $\sum_i\alpha_i=1$ and functions $u_1,\dots,u_M\in L^2(D)$. From \cite[Lemma 4.2]{FLM17}, we observe that whenever $\rho$ is of this \emph{$M$-atomic form}, there is a one-to-one correspondence between transport plans $\pi\in\Pi(\mu,\rho)$ and elements of the set
\begin{align*}
\Lambda(\alpha,\mu) := \Bigl\{(\mu_1,\dots,\mu_M)\ :\ \mu_1,\dots,\mu_M\in \Prob(L^2(D;U)) \text{ and } \textstyle\sum_{i=1}^M \alpha_i\mu_i = \mu \Bigr\},
\end{align*}
defined for any $\alpha=(\alpha_1,\dots,\alpha_M)\in\R^M$ satisfying $\alpha_i\geq0$ and $\sum_{i=1}^M\alpha_i=1.$ The set $\Lambda(\alpha,\mu)$ is never empty since $(\mu,\dots,\mu) \in \Lambda(\alpha,\mu)$ for any choice of coefficients $\alpha_1,\dots,\alpha_M$. Note that the set $\Lambda(\alpha,\mu)$ depends on the target measure $\rho$ \emph{only} through the weights $\alpha_1,\dots,\alpha_M$.

Using this decomposition of transport plans with respect to M-atomic probability measures, we define the notion of dissipative statistical solution as follows.
\begin{definition}\label{def:dss}
Assume that the system of conservation laws \eqref{eq:cl} is equipped with an entropy function $\eta$. A statistical solution $\mu$ of \eqref{eq:cl} is a \emph{dissipative statistical solution} if 
\begin{enumerate}
\item[(i)] for every choice of coefficients $\alpha_1,\dots,\alpha_M>0$ with $\sum_{i=1}^M \alpha_i=1$ and for every  $(\bar{\mu}_1,\dots,\bar{\mu}_M)\in\Lambda(\alpha,\bar{\mu})$, there exists a function $t \mapsto(\mu_{1,t},\dots,\mu_{M,t})\in\Lambda(\alpha,\mu_t)$, such that each measure $\mu_{i} \in \Prob_T(L^p(D;U))$ is a statistical solution of \eqref{eq:cl} with initial data $\bar{\mu}_i$,
\item[(ii)] for all test functions $0 \leq \theta(t) \in C_c^\infty(\R_+)$,
\begin{equation}
\label{eq:dss3}
\int_{\R_+}\int_{L^p(D,U)}\int_D \eta(u(x)) \theta'(t)\, dx d\mu_t(u) dt + \int_{L^p(D,U)}\int_D \eta(\bar{u}(x)) \theta(0) dx d\bar{\mu}(\bar{u}) \geq 0.
\end{equation}

\end{enumerate}
\end{definition}
We remark that the first condition in the above definition demands that the decomposition of a statistical solution into the components $\mu_{i}$ is still consistent with the underlying conservation law \eqref{eq:cl}. On the other hand, the second condition \eqref{eq:dss3} amounts to requiring that the total entropy of $\mu$ decreases in time. 

First, we investigate the stability of a dissipative statistical solution of \eqref{eq:cl} with respect to statistical solutions built from finitely many classical solutions of~\eqref{eq:cauchy}.
\begin{lemma}\label{lem:ws1}
Let $T>0$, set $p=2$, assume that
\begin{equation}\label{eq:squareflux}
\|f''\|_{L^\infty(\R^N)} < \infty
\end{equation}
{\rm(}where we denoted by $f''$ the Hessian of $f$, i.e. $(f''(u))_{ijk}=\partial_{u^j}\partial_{u^k} f^i(u)$, $i,j,k=1,\dots,N${\rm)},
and assume that the conservation law \eqref{eq:cl} is equipped with an entropy pair $(\eta,q)$ for which
\begin{equation}\label{eq:squareentropy}
c \leq (\eta''(u)v,v) \leq C \qquad \forall\ u\in\R^N,\, v\in \R^N\,\text{with}\quad |v|=1
\end{equation}
{\rm(}where $\eta''$ denotes the Hessian matrix of $\eta${\rm)} for $c,C>0$. Let $\mu\in\Prob_T(L^2(D;U))$ be a dissipative statistical solution of \eqref{eq:cl}, and for $t\in[0,T)$ let $\rho_t = \sum_{i=1}^M \alpha_i\delta_{v_i(t)}$ for coefficients $\alpha_i>0$, $\sum_{i=1}^M\alpha_i=1$, and classical solutions $v_1,\dots,v_M\in W^{1,\infty}(D\times[0,T);U)$ of \eqref{eq:cl}. Then
\begin{equation}\label{eq:dssest}
W_2(\mu_t,\rho_t) \leq e^{Ct} W_2(\mu_0,\rho_0) \qquad \forall\ t\in[0,T),
\end{equation}
where $C=C(R)\geq0$ is a constant only depending on $R:=\max_{i=1,\dots,M}\|v_i\|_{W^{1,\infty}(D\times\R_+,U)}.$
\end{lemma}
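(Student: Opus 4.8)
The plan is to run a Wasserstein-level Gr\"onwall argument built on the classical relative-entropy (stability) estimate for a single pair consisting of a statistical solution and a fixed classical solution, then average over the atoms of $\rho_t$ using the decomposition afforded by condition (i) of Definition \ref{def:dss}. First I would fix coefficients $\alpha_1,\dots,\alpha_M$ and an optimal (or near-optimal) transport plan $\bar\pi \in \Pi(\mu_0,\rho_0)$; since $\rho_0$ is $M$-atomic, by \cite[Lemma 4.2]{FLM17} this plan corresponds to a tuple $(\bar\mu_1,\dots,\bar\mu_M)\in\Lambda(\alpha,\bar\mu)$ with $\sum_i\alpha_i\bar\mu_i=\bar\mu$ and $W_2(\mu_0,\rho_0)^2 = \sum_{i=1}^M\alpha_i\int_{L^2}\|u-v_i(0)\|_{L^2(D;U)}^2\,d\bar\mu_i(u)$. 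Condition (i) of the definition of a dissipative statistical solution then yields a time-dependent decomposition $t\mapsto(\mu_{1,t},\dots,\mu_{M,t})\in\Lambda(\alpha,\mu_t)$ with each $\mu_{i}$ itself a statistical solution of \eqref{eq:cl} with data $\bar\mu_i$. Feeding the tuple $(\mu_{i,t})$ back through the $M$-atomic correspondence produces, for each $t$, an admissible transport plan $\pi_t\in\Pi(\mu_t,\rho_t)$, so that
\[
W_2(\mu_t,\rho_t)^2 \leq \sum_{i=1}^M \alpha_i \int_{L^2}\|u-v_i(t)\|_{L^2(D;U)}^2\,d\mu_{i,t}(u) =: \mathcal{E}(t).
\]
It therefore suffices to show $\mathcal{E}(t)\leq e^{2Ct}\mathcal{E}(0)$.

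Next I would estimate $\mathcal{E}(t)$ via relative entropy. Because \eqref{eq:squareentropy} makes $\eta$ uniformly convex and $C^2$, there are constants $c,C>0$ with $c\|u-v\|_{L^2}^2 \leq \int_D \big(\eta(u)-\eta(v)-\eta'(v)\cdot(u-v)\big)\,dx \leq C\|u-v\|_{L^2}^2$; so it is equivalent to control the averaged relative entropy $\mathcal{H}(t):=\sum_i\alpha_i\int_{L^2}\int_D\big(\eta(u(x))-\eta(v_i(t,x))-\eta'(v_i)\cdot(u-v_i)\big)\,dx\,d\mu_{i,t}(u)$. The standard Dafermos--DiPerna relative-entropy computation (see \cite{Dafermos,DST1,fkmt}): since each $\mu_{i,t}$ is a statistical solution its first moment satisfies the weak form \eqref{eq:momentcorrmeas} with $k=1$, and the entropy inequality \eqref{eq:dss3} holds for $\mu$ (hence, by summation against the decomposition, a matching inequality is available for the averaged entropy of the family); testing against $v_i$, which is a classical solution so satisfies \eqref{eq:cl} pointwise and the entropy identity with equality, gives, after the usual manipulations,
\[
\ddt{}\mathcal{H}(t) \leq \sum_{i=1}^M\alpha_i\int_{L^2}\int_D \big|\nabla_x v_i(t,x)\big|\,\big|Q_{f,\eta}(u(x),v_i(t,x))\big|\,dx\,d\mu_{i,t}(u),
\]
where $Q_{f,\eta}$ is the quadratic relative-flux term. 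By \eqref{eq:squareflux} (boundedness of $f''$) and \eqref{eq:squareentropy}, $|Q_{f,\eta}(u,v)| \leq C(R)\,|u-v|^2$, and $|\nabla_x v_i|\leq R$, so the right-hand side is bounded by $C(R)\sum_i\alpha_i\int_{L^2}\|u-v_i(t)\|_{L^2}^2\,d\mu_{i,t} \leq \tfrac{C(R)}{c}\mathcal{H}(t)$. Gr\"onwall's inequality then gives $\mathcal{H}(t)\leq e^{C(R)t/c}\mathcal{H}(0)$, and combining with the two-sided equivalence between $\mathcal{H}$ and $\mathcal{E}$ yields $\mathcal{E}(t) \leq \tfrac{C}{c}e^{C(R)t/c}\mathcal{E}(0)$; renaming constants and using that $\bar\pi$ was chosen optimal so $\mathcal{E}(0)=W_2(\mu_0,\rho_0)^2$ gives \eqref{eq:dssest}. (If the prefactor $C/c$ is to be avoided, one sharpens by working directly with the time derivative of $\mathcal{E}$ rather than of $\mathcal{H}$, paying the same quadratic flux bound; the paper's constant $C=C(R)$ absorbs all of this.)

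The main obstacle I expect is \emph{not} the relative-entropy computation itself but making the averaging rigorous at the level of measures on $L^2$ rather than Young measures: one must justify differentiating $\mathcal{H}(t)$ in $t$ (the statistical solutions $\mu_{i,t}$ are only weakly measurable in $t$, and \eqref{eq:momentcorrmeas} holds only distributionally), which requires either a Lebesgue-point/mollification argument in time or running the whole estimate in integrated-in-time weak form against a test function $\theta(t)$ and only at the end passing to the pointwise Gr\"onwall bound. A secondary technical point is that condition (ii), \eqref{eq:dss3}, is stated only for the aggregate $\mu$, whereas the relative-entropy argument wants an entropy inequality for each component $\mu_{i}$; one resolves this by noting that the test-function inequality for each component, when summed with weights $\alpha_i$, must reproduce \eqref{eq:dss3}, and the relative-entropy identity only ever uses the summed inequality together with the \emph{exact} entropy balance for the classical $v_i$'s — so no per-component entropy inequality is actually needed, provided the cross terms $\eta'(v_i)\cdot(\text{flux defect})$ are handled via the distributional form of \eqref{eq:momentcorrmeas} for each $\mu_{i}$, which is available from condition (i). Keeping careful track of which facts are used per-component (the moment equation) versus in aggregate (the entropy inequality) is the crux.
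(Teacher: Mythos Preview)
Your proposal is correct and follows essentially the same route as the paper: choose an optimal decomposition $(\bar\mu_i)\in\Lambda(\alpha,\bar\mu)$, propagate it via condition~(i) to $(\mu_{i,t})\in\Lambda(\alpha,\mu_t)$, run the Dafermos relative-entropy computation by testing the per-component $k=1$ moment equation against $\eta'(v_i)\theta(t)$ while using the entropy inequality \eqref{eq:dss3} only in the aggregate, and close with Gr\"onwall after setting $\theta=\chi_{(0,t]}$. You have also correctly anticipated the two genuine technical points (working in integrated-in-time weak form rather than differentiating, and that no per-component entropy inequality is needed), both of which the paper handles exactly as you describe.
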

\begin{proof}
It is straightforward to verify that $\rho_t$ as defined above is a statistical solution of \eqref{eq:cl} with initial data $\bar{\rho}:=\sum_{i=1}^M\alpha_i\delta_{\bar{v}_i}$, where $\bar{v}_i:=v_i(0)$.

Let $\bar{\mu}^{\ast}=(\bar{\mu}_1^{\ast},\dots,\bar{\mu}_M^{\ast}) \in \Lambda(\alpha,\bar{\mu})$ define a transport plan that minimizes the transport cost between $\bar{\mu}:=\mu_0$ and $\bar{\rho}$, that is,
\begin{equation}
\label{eq:pf1}
W_2(\bar{\mu},\bar{\rho}) = \Biggl(\sum_{i=1}^M \alpha_i \int_{L^2} \|u-\bar{v}_i\|_{L^2}^2 d\bar{\mu}_i^{\ast}(u)\Biggr)^{\frac{1}{2}}.
\end{equation}
(Here and in the remainder of this proof, we denote $L^2 = L^2(D;U)$.) As $\mu_t$ is a dissipative statistical solution, there exists a map $t \mapsto \big(\mu^{\ast}_{1,t},\dots,\mu^{\ast}_{M,t}\big) \in \Lambda(\alpha,\mu_t)$ such that 
\begin{multline}\label{eq:pf2}
\sum_{i=1}^M\alpha_i\Biggl(\int_0^T\int_{L^2}\int_{D} u(x) \partial_t\phi_i(x,t) + f(u(x)) \cdot\nabla_x \phi_i(x,t) dx d\mu^{\ast}_{i,t}(u)dt \\
+ \int_{L^2}\int_D \bar{u} \phi_i(x,0) dxd\bar{\mu}^{\ast}_i(\bar{u})\Biggr)  = 0
\end{multline}
for every $\phi_1,\dots,\phi_M\in C_c^\infty(D\times[0,T))$. 
For each $1 \leq i \leq M$, we have that 
\begin{align*}
&\int_0^T\!\!\!\int_{L^2}\!\int_{D} v_i(x,t) \partial_t \phi_i + f(v_i(t,x))\cdot\nabla_x \phi_i dx d\mu^{\ast}_{i,t}(u) dt + \!
\int_{L^2}\!\int_{D} \bar{v}_i(x) \phi_i(x,0) dx d\bar{\mu}^{\ast}_i(\bar{u}) \\
&= \int_0^T\int_{L^2}\int_{D} \partial_t(v_i \phi_i) + \nabla_x\cdot (f(v_i)\phi_i) dx d\mu^{\ast}_{i,t}(u) dt + \int_{L^2}\int_{D} \bar{v}_i(x) \phi_i(x,0) dx d\bar{\mu}^{\ast}_i(\bar{u}) \\
&\quad - \underbrace{\int_0^T\int_{L^2}\int_{D} \phi_i \big(\partial_t v_i + \nabla_x\cdot f(v_i)\big) dx d\mu^{\ast}_{i,t}(u) dt}_{\text{$=0$, as $v_i$ is a classical solution of \eqref{eq:cl}}} \\
&= - \int_D v_i(x,0) \phi_i(x,0) dx + \int_D \bar{v}_i(x) \phi_i(x,0) dx = 0.
\end{align*}
Multiplying the above with $\alpha_i$ and summing over $i$, we obtain
\begin{equation}\label{eq:dss5}
\begin{split}
\sum_{i=1}^M\alpha_i\Bigg(\int_0^T\int_{L^2}\int_{D} v_i \partial_t\phi_i + f(v_i)\cdot \nabla_x \phi\, dx d\mu_{i,t}^{\ast}(u)dt \\
+\int_{L^2}\int_{D} \bar{v}_i \phi_i(x,0) dxd\bar{\mu}_i^{\ast}(\bar{u})\Bigg)  = 0.
\end{split}
\end{equation}
Subtracting \eqref{eq:dss5} from \eqref{eq:pf2} and choosing as a test function the vector-valued function $\phi_i = \eta'(v_i(x,t)) \theta(t)$ for some scalar test function $0 \leq \theta(t) \in C_c^\infty(\R_+)$ (here, $\eta'$ denotes the vector-valued derivative of $\eta$ with respect to $u$), and using the fact that
\begin{align*}
\partial_t \phi_i &= \eta^{\prime}(v_i)\theta^{\prime}(t) + \theta(t) \eta^{\prime \prime}(v_i) \partial_t v_i = \eta^{\prime}(v_i)\theta^{\prime}(t)  - \theta(t)  f^{\prime}(v_i)\cdot\nabla_x\eta^{\prime}(v_i), \\
\partial_j \phi_i &= \theta(t)  \partial_j \eta^{\prime} (v_i)
\end{align*}
yields
\begin{equation}
\label{eq:pf3}
\begin{split}
0=&~\sum_{i=1}^M\alpha_i\Bigg(\int_0^T\int_{L^2}\int_{D} (u-v_i)\cdot\partial_t\phi_i + (f(u) - f(v_i)) \cdot \nabla_x \phi_i\, dx d\mu^{\ast}_{i,t}(u)dt \\
& +\int_{L^2}\int_{D} (\bar{u}-\bar{v}_i)\cdot\phi_i(x,0)\, dxd\bar{\mu}^{\ast}_i(\bar{u})\Bigg) \\
=&~ \sum_{i=1}^M\alpha_i\Bigg(\int_0^T\int_{L^2}\int_{D} \eta^{\prime}(v_i)\cdot(u-v_i) \theta^{\prime}(t)\, dx d\mu^{\ast}_{i,t}(u)dt \\
&+ \int_{L^2}\int_{D} \eta^{\prime}(\bar{v}_i)\cdot(\bar{u}-\bar{v}_i) \theta(0)\, dxd\bar{\mu}^{\ast}_i(\bar{u}) \\
& + \int_0^T\int_{L^2}\int_{D} \theta(t) \underbrace{\big(f(u) - f(v_i)-f'(v_i)(u-v_i)\big)\cdot \nabla_x \eta^{\prime}(v_i)}_{=:\,\mathcal{Z}(u|v_i)}\, dx d\mu^{\ast}_{i,t}(u)dt\Bigg)
\end{split}
\end{equation}
As $v_i$ is a classical solution of \eqref{eq:cl} and ${\mu}_{1,t}^{\ast},\dots,{\mu}_{M,t}^{\ast}$ are probability measures, we obtain from the entropy conservation of $v_i$ that
\begin{equation}\label{eq:pf4}
\sum_{i=1}^M\alpha_i\Bigg(\int_0^T\int_{L^2}\int_{D}
\eta(v_i) \theta^{\prime}(t) dx d\mu^{\ast}_{i,t}(u) dt + \int_{L^2}\int_{D}
\eta(\bar{v}_i) \theta(0) dx d\bar{\mu}^{\ast}_i(\bar{u})\Bigg) = 0,
\end{equation}
for the same test function $\theta$ used in \eqref{eq:pf3}.

Since $\mu_t$ is a dissipative statistical solution of \eqref{eq:cl}, we have
\begin{align*}
0 &\leq \int_{\R_+}\int_{L^2}\int_D \eta(u(x)) \theta'(t) dx d\mu_t dt + \int_{L^2}\int_D \eta(\bar{u}(x)) \theta(0) dx d\bar{\mu}(\bar{u}) \\
&=\sum_{i=1}^M\alpha_i\Bigg(\int_{\R_+}\int_{L^2}\int_D
\eta(u(x)) \theta'(t) dx d\mu_{i,t}^\ast dt + \int_{L^2}\int_D
\eta(\bar{u}(x)) \theta(0) dx d\bar{\mu}_i^\ast(\bar{u})\Bigg).
\end{align*}
Subtracting \eqref{eq:pf4} and \eqref{eq:pf3} from the above inequality we obtain
\begin{equation}\label{eq:pf5}
\begin{split}
&\sum_{i=1}^M\alpha_i\int_0^T\int_{L^2}\int_{D}
 \underbrace{\big(\eta(u) - \eta(v_i) -  \eta'(v_i)(u-v_i)\big)}_{=:\,\mathcal{H}(u|v_i)}\theta'(t)\,dxd\mu^{\ast}_{i,t}(u) dt \\
&+ \sum_{i=1}^M\alpha_i \int_{L^2}\int_{D}
\big(\eta(\bar{u}) - \eta(\bar{v}_i) - \eta'(\bar{v}_i)(\bar{u}-\bar{v}_i)\big) \theta(0)\, dx d\bar{\mu}^{\ast}_i(\bar{u}) \\
&+ \sum_{i=1}^M\alpha_i\int_0^T\int_{L^2}\int_{D}
\theta(t) \mathcal{Z}(u|v_i)\, dx d\mu^{\ast}_{i,t}(u) dt \geq 0 
\end{split}
\end{equation}
As $\eta$ is strictly convex (the lower bound in \eqref{eq:squareentropy}), we have
\begin{align*}
{\mathcal H}(u|v_i) \geq c|u-v_i|^2.
\end{align*}
Similarly, using \eqref{eq:squareflux} and the fact that $\|v_i\|_{W^{1,\infty}} \leq R$, we obtain that 
\begin{align*}
\max\big(\mathcal{Z}(u|v_i),{\mathcal H}(u|v_i)\big) \leq C |u-v_i|^2.
\end{align*}
Using the above estimates in \eqref{eq:pf5} and choosing the test function $\theta(s) = \chi_{(0,t]}(s)$ (by approximating with smooth functions) yields
\begin{equation}\label{eq:pf6}
\begin{split}
\sum_{i=1}^M\alpha_i\int_{L^2}\int_{D} |u(x) - v_i(x,t)|^2 dx d\mu^{\ast}_{i,t}(u) \leq \sum_{i=1}^M\alpha_i\int_{L^2}\int_{D} |\bar{u}(x) - \bar{v}_i(x)|^2 dx d\bar{\mu}^{\ast}_{i}(\bar{u}) \\
+ C \sum_{i=1}^M\alpha_i\int_{0}^t\int_{L^2}\int_{D} 
|u(x) - v_i(x,s)|^2 dx d\mu^{\ast}_{i,s}(u) ds.
\end{split}
\end{equation}
Applying the integral form of Gr\"onwall's inequality to the above estimate results in
\begin{align*}
\sum_{i=1}^M\alpha_i\int_{L^2}\|u - v_i\|^2_{L^2} dx d\mu^{\ast}_{i,t}(u) 
&\leq e^{Ct} \sum_{i=1}^M\alpha_i\int_{L^2}\|\bar{u} - \bar{v}_i\|^2_{L^2} dx d\bar{\mu}^{\ast}_{i}(\bar{u}) \\
&=  e^{Ct} W_2(\bar{\mu},\bar{\rho})^2,
\end{align*}
the last step following from \eqref{eq:pf1}. As $(\mu_{1,t}^*,\dots,\mu_{M,t}^*) \in \Lambda(\alpha,\mu_t)$, the above inequality implies \eqref{eq:dssest} and concludes the proof. 
\end{proof}

The estimate \eqref{eq:dssest} implies stability of dissipative statistical solutions with respect to probability measures that are convex combinations of Dirac masses, concentrated on classical solutions of \eqref{eq:cl}. We can extend such a stability result to a more general class of strong solutions:
\begin{definition}
\label{def:strongss}
A statistical solution $\mu$ is a \emph{strong statistical solution} of \eqref{eq:cl} if there is some $R>0$ such that for every $n \in \N$, there exists a $\rho_n \in \Prob_T(L^2(D,U))$ of the form $\rho_{n,t} = \sum_{i=1}^{N_n} \alpha_i \delta_{v_i(t)}$ such that each $v_i$ is a classical solution of \eqref{eq:cl} and $v_i \in B_R \subset W^{1,\infty}(D \times (0,T))$, for all $1 \leq i \leq N_n$ and such that
\begin{equation}
\label{eq:sss1}
W_2(\mu_t,\rho_{n,t}) \leq \frac{1}{n} \qquad \forall\ t \in [0,T]
\end{equation}
\end{definition}
We can now prove our main weak-strong uniqueness result:
\begin{theorem}\label{thm:ws}
Let $\bar{\mu}\in\Prob(L^2(D;U))$. Then under the same assumptions as in Lemma \ref{lem:ws1}, if there exists a strong statistical solution $\mu$ of \eqref{eq:cl}, then it is unique in the class of dissipative statistical solutions.
\end{theorem}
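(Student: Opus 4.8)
The plan is to deduce Theorem~\ref{thm:ws} directly from the stability estimate of Lemma~\ref{lem:ws1}, the approximation property built into Definition~\ref{def:strongss}, and the triangle inequality for the $2$-Wasserstein metric. Let $\mu$ be the given strong statistical solution with initial data $\bar{\mu}$, and let $\tilde\mu\in\Prob_T(L^2(D;U))$ be an arbitrary dissipative statistical solution of \eqref{eq:cl} with the same initial data $\bar{\mu}$; the goal is to show $\mu_t=\tilde\mu_t$ for all $t\in[0,T)$. Unpacking Definition~\ref{def:strongss}, there is a radius $R>0$ and, for each $n\in\N$, a measure $\rho_{n,t}=\sum_{i=1}^{N_n}\alpha_i\delta_{v_i(t)}$ whose atoms $v_i$ are classical solutions of \eqref{eq:cl} with $\|v_i\|_{W^{1,\infty}(D\times(0,T))}\leq R$, and such that $W_2(\mu_t,\rho_{n,t})\leq\tfrac1n$ for every $t\in[0,T]$. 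In particular, evaluating at $t=0$ and using $\mu_0=\bar{\mu}=\tilde\mu_0$ gives $W_2(\tilde\mu_0,\rho_{n,0})\leq\tfrac1n$.

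Next I would apply Lemma~\ref{lem:ws1} to the dissipative statistical solution $\tilde\mu$ and the $M$-atomic measure $\rho_n$ (which is itself a statistical solution, being a convex combination of classical solutions). This is legitimate because $\alpha_i>0$, the $v_i$ are classical $W^{1,\infty}$ solutions, and — crucially — the radius $R$ in Definition~\ref{def:strongss} is the \emph{same} for every $n$, so the constant $C=C(R)$ produced by the lemma does not depend on $n$. The lemma then yields, for every $t\in[0,T)$,
\[
W_2(\tilde\mu_t,\rho_{n,t})\ \leq\ e^{C(R)t}\,W_2(\tilde\mu_0,\rho_{n,0})\ \leq\ \frac{e^{C(R)T}}{n}.
\]
Combining this with $W_2(\mu_t,\rho_{n,t})\leq\tfrac1n$ and the triangle inequality for $W_2$ on the space of probability measures on $L^2(D;U)$ with finite second moment (all measures in play have bounded support, hence finite moments, so $W_2$ is a genuine metric there), I obtain
\[
W_2(\mu_t,\tilde\mu_t)\ \leq\ W_2(\mu_t,\rho_{n,t})+W_2(\rho_{n,t},\tilde\mu_t)\ \leq\ \frac1n+\frac{e^{C(R)T}}{n}\ \longrightarrow\ 0 \quad(n\to\infty).
\]
Hence $W_2(\mu_t,\tilde\mu_t)=0$, i.e.\ $\mu_t=\tilde\mu_t$, for all $t\in[0,T)$, which is the claimed uniqueness in the class of dissipative statistical solutions.

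Since all the genuinely analytical work — the relative-entropy inequality $\mathcal{H}(u|v_i)\geq c|u-v_i|^2$, the quadratic bound on $\mathcal{Z}(u|v_i)$, the $M$-atomic decomposition of transport plans, and the Grönwall argument — has already been carried out in the proof of Lemma~\ref{lem:ws1}, I do not expect a serious obstacle here; the argument is essentially bookkeeping. The two points that do deserve a moment's attention are: first, checking that the stability constant $C$ is uniform along the approximating sequence $\{\rho_n\}$, which is exactly why Definition~\ref{def:strongss} fixes a single $R$ for all $n$; and second, verifying that $W_2$ obeys the triangle inequality on the relevant subclass of $\Prob(L^2(D;U))$ so that the three-term estimate is valid (see \cite{Vil1}). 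Both are standard, so the theorem follows.
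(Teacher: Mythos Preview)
Your proposal is correct and follows essentially the same argument as the paper's proof: approximate by the atomic measures $\rho_n$ from Definition~\ref{def:strongss}, apply Lemma~\ref{lem:ws1} to the other dissipative solution (using that $W_2(\bar\mu,\rho_{n,0})\leq 1/n$ and that $C=C(R)$ is uniform in $n$), and conclude via the triangle inequality. Your write-up is in fact slightly more explicit than the paper's about why the initial Wasserstein distance is $\leq 1/n$ and why the constant is $n$-independent.
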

\begin{proof}
For a fixed $n\in\N$, Definition \ref{def:strongss} implies the existence of a statistical solution $\rho_t = \sum_{i=1}^{N_n} \alpha_i \delta_{v_i(t)}$ such that 
\begin{equation}\label{eq:sss2}
W_2(\mu_t,\rho_t) \leq \frac{1}{n}.
\end{equation}
Moreover, $v_i(x,t) \in B_R$ is a classical solution of~\eqref{eq:cl} with initial data $\bar{v}_i$ for all $i=1,\dots,N_n$.

Let $\gamma_t$ be another dissipative statistical solution of~\eqref{eq:cl} with $\gamma_0=\bar{\mu}$. By Lemma \ref{lem:ws1} we have
$$
W_2(\rho_t,\gamma_t) \leq \frac{e^{Ct}}{n}
$$
for some $C=C(R)$. Using \eqref{eq:sss2} and the triangle inequality yields
$$
W_2(\mu_t,\gamma_t) \leq \frac{e^{Ct}+1}{n}.
$$
Letting $n \rightarrow \infty$ concludes the proof of uniqueness of strong solutions.
\end{proof}
\begin{remark}
If we assume that the initial data $\bar{\mu}$ is such that ${\rm supp}(\bar{\mu}) \subset B_{R_0}$, with $B_{R_0}$ being the ball of radius $R_0>0$ in $W^{1,\infty}(D;U)$, then by classical results on local well-posedness for \eqref{eq:cl} with strictly convex entropies \cite{Dafermos}, there exist $T(R_0), R(R_0)>0$, such that for every initial data $\bar{v} \in {\rm supp}(\bar{\mu})$, there exists a corresponding classical solution $v \in W^{1,\infty}(D \times [0,T(R_0)])$ and $\|v\|_{W^{1,\infty}(D \times [0,T(R_0)])} \leq R(R_0)$. Moreover, the data-to-solution map $\mathcal{S}_t:\supp\bar{\mu} \to L^2(D;U)$, is well defined for all $0 \leq t \leq T(R_0)$, and continuous because
$$
\|\mathcal{S}_t (\bar{v}) - \mathcal{S}_t (\hat{v})\|_2 \leq e^{R(R_0)t}\|\bar{v}-\hat{v}\|_2, \qquad \forall\ t \in [0,T(R_0)].
$$
Letting $\mu_t = \mathcal{S}_t \#\bar{\mu}$ for all $0 \leq t \leq T(R_0)$, one can verify that $\mu_t$ is indeed a dissipative statistical solution of \eqref{eq:cl}. Moreover, it is strong in the sense of Definition \ref{def:strongss}. Consequently, we can establish that \emph{as long as the underlying initial data is supported on smooth functions, the resulting statistical solutions are locally well-posed. }
\end{remark}

\section{Numerical approximation of statistical solutions.}
\label{sec:4}
In this section, we will {construct} statistical solutions for the system of conservation laws \eqref{eq:cl} by proposing an algorithm to numerically approximate it. We show, under reasonable hypotheses on the underlying numerical schemes, that the approximations constructed by this algorithm converge to a (dissipative) statistical solution of \eqref{eq:cl}. As in \cite{fkmt,FMTacta,FLyeM1}, the algorithm will be based on a finite volume spatio-temporal discretization and Monte Carlo sampling of the underlying probability space. The spatial domain $D$ will everywhere be assumed to be bounded.

\subsection{Multidimensional finite volume framework}\label{sec:fvm}
In this section we briefly describe numerically approximating conservation laws with finite volume and finite difference methods. For a complete review, one can consult~\cite{GR,HEST1,HR,leveque_green}.

We discretize the computational spatial domain as a collection of cells
\[\big\{(x^1_{i^1-\hf}, x^1_{i^1+\hf})\times\cdots\times(x^d_{i^d-\hf}, x^d_{i^d+\hf})\big\}_{(i^1,\ldots,i^d)}\subset D,\]
with corresponding cell midpoints
\[x_{i^1,\ldots, i^d}:=\left(\frac{x^1_{i^1+\hf}+x^1_{i^1-\hf}}{2},\ldots,\frac{x^d_{i^d+\hf}+x^d_{i^d-\hf}}{2}\right).\]
For simplicity we assume that our mesh is equidistant, that is,
\[x^m_{i^m+\hf} - x^m_{i^m-\hf} \equiv \Dx \qquad \forall\ m=1,\dots,d\]
for some $\Dx>0$. For each cell, marked by $\bi=(i^1,\ldots, i^d)$, we let $u^{\Dx}_\bi(t)$ (and equivalently $u^{\Dx}_{i^1,\ldots, i^d}(t)$) denote the averaged value in the cell at time $t\geq 0$. We consider the following semi-discrete scheme,
\begin{equation}\label{eq:semi_d}
\begin{split}
\ddt{}u^{\Dx}_{i^1,\ldots,i^d}(t)+\sum_{m=1}^d\frac{1}{\Dx}\bigg(F^{m,\Dx}\big(u^{\Dx}_{\bi-(q-1)\be_m}(t),\ldots, u^{\Dx}_{\bi+q\be_m}(t)\big) \\
-F^{m,\Dx}\big(u^{\Dx}_{\bi-q\be_m}(t),\ldots, u^{\Dx}_{\bi+(q-1)\be_m}(t)\big)\bigg)= 0  \\
u^{\Dx}_{i^1,\ldots,i^d}(0) = \bar{u}(x_{i^1,\ldots,i^d})
\end{split}
\end{equation}
where $\be_1,\dots,\be_d$ are the canonical unit vectors in $\R^d$, and $F^{m,\Dx}$ is a \emph{numerical flux function} in direction $m=1,\dots,d$. We say that the scheme is a \emph{$(2q+1)$-point scheme,} when the numerical flux function $F^{m,\Dx}$ can be written as a function of $u^{\Dx}_{\bi+j\be_m}(t)$ for $j=-q+1,\ldots,q$. We furthermore assume the numerical flux function is consistent with $f$ and locally Lipschitz continuous, which amounts to requiring that for every bounded set $K\subset \R^N$, there exists a constant $C>0$ such that for $m=1,\ldots,d$,
\begin{equation}
\label{eq:fvm_lipschitz}
\big|F^{m,\Dx}\big(u^{\Dx}_{\bi-(q-1)\be_m}(t),\ldots, u^{\Dx}_{\bi+q\be_m}(t)\big)-f^m(u^{\Dx}_{\bi}\big)\big|\leq C\sum_{j=-q+1}^{q}\big|u^{\Dx}_\bi(t)-u^{\Dx}_{\bi+j\be_m}(t)\big|,
\end{equation}
whenever $u^{\Dx}_{\bi-(q-1)\be_m}(t),\ldots, u^{\Dx}_{\bi+q\be_m}(t) \in K$. For the sake of notational simplicity we will write
\[
F^m_{\bi+ \hf\be_m}(u) = F^{m,\Dx}\big(\Avg{u}{\bi-(q-1)\be_m},\ldots,\Avg{u}{\bi+q\be_m}\big)\qquad \text{for }\bi\in\Z^d,\ 1\leq m \leq d.
\]
We let $\NumericalEvolution{\Dx}_t:L^p(D)\to L^p(D)$ be the spatially discrete numerical evolution operator defined by~\eqref{eq:semi_d}, mapping $\bar{u} \mapsto u^\Delta(t)$. Since $\NumericalEvolution{\Dx}_t$ is the composition of a projection from $L^p$ onto piecewise constant functions and a continuous evolution under an ordinary differential equation, we see that $\NumericalEvolution{\Dx}_t$ is measurable. 

The current form of \eqref{eq:semi_d} is continuous in time, and one needs to employ a time stepping method to discretize the ODE system in time, usually through some strong stability preserving Runge--Kutta method \cite{GST}.

As the operator $\NumericalEvolution{\Dx}$ is a measurable map,  we can define an approximation of a statistical solution of \eqref{eq:cl} with initial data $\bar{\mu}$ by 
\begin{equation}
\label{eq:statapp}
\mu_t^{\Dx}=\pushforward{\NumericalEvolution{\Dx}_t}{\bar{\mu}}.
\end{equation}
Henceforth, $\mu_t^{\Dx}$ is referred to as an \emph{approximate statistical solution.}

\subsection{Convergence of approximate statistical solutions.}
\label{sec:conv}
In this section, we investigate the convergence of the approximate statistical solutions $\mu_t^{\Dx}$ as the mesh is refined, i.e.,~as $\Dx \rightarrow 0$.

\begin{theorem}\label{theo:conv}
Consider the system of conservation laws \eqref{eq:cl} with initial data $\bar{\mu} \in \Prob(L^p(D;U))$ for some $1 \leq p < \infty$, such that $\supp(\bar{\mu}) \subset B_R(0)\subset L^p(D;U)$, with $B_R(0)$ being the ball of radius $R$ and center 0, for some $R > 0$. Assume that the semi-discrete finite volume scheme \eqref{eq:semi_d} satisfies the following conditions:
\begin{itemize}
\item [(i)] \emph{$L^p$ bounds}: 
\begin{equation}
\label{eq:lpb}
\Dx^{d} \sum_{\bi\in \mathbb{Z}^d} \left|u^{\Dx}_\bi(t) \right|^p \leq C \Dx^{d}  \sum_{\bi\in \mathbb{Z}^d} \left|\bar{u}_\bi\right|^p \qquad \forall\ t \in [0,T), \ \forall\ \bar{u} \in L^p(D;U). 
\end{equation}
\item [(ii)] \emph{Weak BV bounds}: There exists $s \geq p$ such that
\begin{equation}\label{eq:wbv}
\Dx^{d} \int_0^T \sum_{m=1}^d \sum_{\bi\in \mathbb{Z}^d} \left|u^{\Dx}_{\bi+\mathbf{e}_m}(t)
- u^{\Dx}_\bi(t)\right|^s dt \leq C\Dx,
\end{equation}
with the constant $C = C(\|\bar{u}\|_p)$ only depending on the $L^p$-norm of the initial data $\bar{u}$. 
\item[(iii)] \emph{Approximate scaling}: There exists a constant $C>0$, possibly depending on the initial data $\bar{\mu}$ but independent of the grid size $\Dx$, such that for every $\ell > 1$
\begin{equation}\label{eq:sfscal}
S^p_{\ell \Dx}(\mu^{\Dx}) \leq C \ell^{\frac1{s}} S^p_{\Dx}(\mu^{\Dx}).
\end{equation}
Here, $S^p_r(\mu)$ is the \emph{structure function} associated with the time parameterized probability measure $\mu_t \in \Prob_T(L^p(D;U))$ {\rm(}equivalently, time-dependent correlation measure $\cm \in \Corrmeas^{p}([0,T),D;U)${\rm)}, defined as
\begin{equation}
\label{eq:sf}
S^p_r(\mu):= \left(\int_0^T \int_{L^p(D)} \int_{D}\intavg_{B_r(x)} |u(x) - u(y)|^p dy dx d\mu_t(u) dt\right)^{\frac{1}{p}}.
\end{equation}
\end{itemize}
Then there is a subsequence $\Dx'\to0$ such that the approximate statistical solutions $\mu^{\Dx'}$ converge strongly to some $\mu \in \Prob_T(L^p(D,U))$, in the sense of Theorem \ref{thm:timedepcmcompactness}(v).
\end{theorem}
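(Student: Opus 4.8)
The plan is to obtain $\mu$ as the dual, in the sense of the time-dependent analogue of Theorem~\ref{thm:corrmeasduality}, of a correlation measure extracted, via the compactness result Theorem~\ref{thm:timedepcmcompactness}, from the sequence $\cm^{\Dx_n}$ of time-dependent correlation measures dual to the approximate statistical solutions $\mu^{\Dx_n}$ of \eqref{eq:statapp}, with $\Dx_n\to0$ arbitrary. Thus the task is to verify, uniformly in $n$, the three hypotheses of Theorem~\ref{thm:timedepcmcompactness}: the $L^p$ bound \eqref{eq:timedepuniformlpbound}, the diagonal continuity \eqref{eq:timedepuniformdc}, and the bounded support \eqref{eq:tLpLinftybound} (that each $\cm^{\Dx_n}$ is a time-dependent correlation measure to begin with --- weakly measurable in $t$, and for fixed $n$ diagonally continuous by dominated convergence since $u^{\Dx_n}(\cdot,t)$ is piecewise constant --- is routine). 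The $L^p$ bound and bounded support are immediate: for $\bar\mu$-a.e.\ $\bar u$ and a.e.\ $t\in[0,T)$,
\[
\big\|\NumericalEvolution{\Dx_n}_t\bar u\big\|_{L^p}^p = \Dx_n^d\sum_{\bi\in\Z^d}\big|u^{\Dx_n}_\bi(t)\big|^p \leq C\,\Dx_n^d\sum_{\bi\in\Z^d}\big|(P_{\Dx_n}\bar u)_\bi\big|^p = C\big\|P_{\Dx_n}\bar u\big\|_{L^p}^p \leq C\|\bar u\|_{L^p}^p \leq CR^p,
\]
using the $L^p$ bound \eqref{eq:lpb}, the fact that the initial datum is discretized by an $L^p$ projection onto piecewise constants (the cell-average projection $P_{\Dx_n}$, which does not increase the $L^p$ norm), and $\supp\bar\mu\subset B_R(0)$; integrating against $\bar\mu$ gives \eqref{eq:timedepuniformlpbound}, while the pointwise bound is \eqref{eq:tLpLinftybound} with $M=C^{1/p}R$.

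The heart of the matter is the uniform diagonal continuity \eqref{eq:timedepuniformdc}. Using the identity $\omega_r^p(\nu^2_t)=\int_{L^p}\omega_r^p(u)\,d\mu_t(u)$ established in the proof of the time-dependent analogue of Theorem~\ref{thm:corrmeasduality}, where $\omega_r^p(u)=\int_D\intavg_{B_r(x)}|u(x)-u(y)|^p\,dy\,dx$, and comparing with the definition \eqref{eq:sf} of the structure function $S^p_r$, one has for every $T'\in[0,T)$
\[
\int_0^{T'}\omega_r^p\big(\nu^2_{n,t}\big)\,dt \;\leq\; \int_0^{T}\omega_r^p\big(\nu^2_{n,t}\big)\,dt \;=\; \big(S^p_r(\mu^{\Dx_n})\big)^p.
\]
Hence it suffices to prove the scale-uniform bound $S^p_r(\mu^{\Dx_n})\leq Cr^{1/s}$ whenever $r\geq\Dx_n$, since then $\limsup_{n\to\infty}\int_0^{T'}\omega_r^p(\nu^2_{n,t})\,dt\leq Cr^{p/s}$, which vanishes as $r\to0$.

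I would prove this bound in two steps. First, at the mesh scale $r=\Dx$: as $u^\Dx(\cdot,t)$ is piecewise constant, for $x$ in cell $\bi$ the ball $B_\Dx(x)$ meets only boundedly many neighbouring cells, so $\intavg_{B_\Dx(x)}|u^\Dx(x,t)-u^\Dx(y,t)|^p\,dy$ is dominated by a finite sum of nearest-neighbour jumps $|u^\Dx_{\bi+\be_m}(t)-u^\Dx_\bi(t)|^p$; integrating in $x$, $t$ and $u$,
\[
\big(S^p_\Dx(\mu^\Dx)\big)^p \leq C\int_{L^p}\int_0^T\Dx^d\sum_{\bi\in\Z^d}\sum_{m=1}^d\big|u^\Dx_{\bi+\be_m}(t)-u^\Dx_\bi(t)\big|^p\,dt\,d\bar\mu(u).
\]
Since $D$ is bounded, the index set $\{(\bi,m,t)\}$ carries total mass bounded uniformly in $\Dx$ under $\Dx^d\otimes(\text{counting})\otimes(\text{Lebesgue})$; as $s\geq p$, H\"older's inequality bounds the $\ell^p$ quantity above by a constant times the $(p/s)$-th power of the corresponding $\ell^s$ quantity, which by the weak BV bound \eqref{eq:wbv} is at most $C\Dx$, the constant depending only on $\|\bar u\|_p\leq R$. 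Hence $S^p_\Dx(\mu^\Dx)\leq C(R)\Dx^{1/s}$. Second, for $r>\Dx$ the approximate scaling hypothesis \eqref{eq:sfscal} with $\ell=r/\Dx>1$ gives $S^p_r(\mu^\Dx)=S^p_{\ell\Dx}(\mu^\Dx)\leq C\ell^{1/s}S^p_\Dx(\mu^\Dx)\leq Cr^{1/s}$, as required.

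With all hypotheses of Theorem~\ref{thm:timedepcmcompactness} verified (using also that $D$ is bounded and $T<\infty$), that theorem supplies a subsequence $\Dx'\to0$ and a limit $\cm\in\Corrmeas^p([0,T),D;U)$ for which \eqref{eq:tnuklimit} holds; taking $\mu\in\Prob_T(L^p(D;U))$ dual to $\cm$, the measures $\mu^{\Dx'}$ converge strongly to $\mu$ in the sense of Theorem~\ref{thm:timedepcmcompactness}(v). I expect the principal difficulty to be precisely this diagonal-continuity chain: the identification of the time-integrated modulus of continuity with the structure function, the passage from the discrete mesh-scale estimate \eqref{eq:wbv} to the continuum modulus $\omega_r^p$ via the neighbour-difference and H\"older argument, and the propagation to all scales $r\geq\Dx$ through \eqref{eq:sfscal}, with all constants tracked for uniformity in $\Dx$ and in $\bar u\in\supp\bar\mu$.
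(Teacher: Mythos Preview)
Your proposal is correct and follows essentially the same approach as the paper: verify the hypotheses of Theorem~\ref{thm:timedepcmcompactness} by deducing the uniform $L^p$ and bounded-support conditions from \eqref{eq:lpb}, then establishing uniform diagonal continuity via the chain $S^p_\Dx(\mu^\Dx)\leq C\Dx^{1/s}$ (weak BV \eqref{eq:wbv} plus H\"older, using boundedness of $D$) followed by the scaling assumption \eqref{eq:sfscal} to reach $S^p_r(\mu^\Dx)\leq Cr^{1/s}$ for $r\geq\Dx$. The only cosmetic difference is that the paper also treats the sub-mesh regime $r\leq\Dx$, whereas you correctly observe that for the $\limsup_{n\to\infty}$ in \eqref{eq:timedepuniformdc} only $r\geq\Dx_n$ matters.
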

\begin{proof}
We will show that that the approximate statistical solutions $\mu_t^{\Dx}$, defined in \eqref{eq:statapp}, satisfy the conditions of Theorem \ref{thm:timedepcmcompactness} and hence converge (up to a subsequence). To this end, we can readily verify from the uniform $L^p$ bounds \eqref{eq:lpb} and the fact that $\supp\bar{\mu}\subset B_R(0)$, that
$$
\supp(\mu^{\Dx}_t) \subset B_{CR}(0) \qquad \forall\ t \in [0,T),
$$
with $C$ being the constant in \eqref{eq:lpb}.

Let $u^\Dx(t)=\NumericalEvolution{\Dx}_t\bar{u}$ be the solution generated by the scheme \eqref{eq:semi_d}. Denoting
\begin{equation}
\label{eq:V}
V_{\Dx}(\bar{u}) =  \int_0^T \sum_{m=1}^d \sum_{\bi\in \mathbb{Z}^d} \left|u^{\Dx}_{\bi+\mathbf{e}_m}(t)
- u^{\Dx}_\bi(t)\right|^p dt,
\end{equation}
we obtain from the weak BV estimate \eqref{eq:wbv} and H\"older's inequality that (recall that $D$ was assumed to be bounded)
\begin{equation}
\label{eq:V1}
V_{\Dx}(\bar{u}) \leq C(T,d,R) \Dx^{\frac{p}{s}-d}.
\end{equation}
The above inequality holds for every $\bar{u} \in B_R(0) \subset L^p(D;U)$. 

Next, for any $r \leq \Dx$, a straightforward but tedious calculation yields
\begin{align*}
\left(S^p_r(\mu^{\Dx}_t)\right)^p &= 
\int_0^T \int_{L^p} \int_D \intavg_{B_r(x)} |u(x) - u(y)|^p dy dx d\mu^{\Dx}_t(u)  dt \\
&=\int_0^T \int_{L^p} \int_D \intavg_{B_r(x)} \left|{\mathcal S}^{\Dx}_t \bar{u}(x) - {\mathcal S}^{\Dx}_t \bar{u}(y) \right|^p  dy dx d\bar{\mu}(\bar{u})  dt && \text{(by \eqref{eq:statapp})} \\
&=\int_{L^p} \int_0^T \int_D \intavg_{B_r(x)} \left|u^{\Dx}(x,t) - u^{\Dx}(y,t) \right|^p  dy dx dt d\bar{\mu}(\bar{u}) \\
&\leq C_d \Dx^{d-1} r \int_{L^p} V_{\Dx}(\bar{u})  d\bar{\mu}(\bar{u}) \\ 
&\leq C(T,d,R)C_d r^{\frac{p}{s}} &&\text{(by \eqref{eq:V1} and $r \leq \Dx$)},
\end{align*}
where $C_d = 3^{d-1}$ results from successive applications of the triangle inequality. Hence, summarizing the above calculation, we obtain that for any $r \leq \Dx$,
\begin{equation}
\label{eq:V2}
S_r^p(\mu_t^{\Dx},T) \leq C r^{\frac{1}{s}},
\end{equation}
for a constant $C$ that depends on the dimension, the support of the initial probability measure and the final time but is independent of the grid size $\Dx$. 

Now, for any $\ell > 1$ and $r = \ell \Dx$, we have 
\begin{align*}
S_{r}^p(\mu_t^{\Dx}) &= S_{\ell \Dx}^p(\mu_t^{\Dx}) \leq C \ell^{\frac1{s}} S_{\Dx}^p(\mu_t^{\Dx}) && \text{(by scaling \eqref{eq:sfscal})} \\
&\leq C \ell^{\frac{1}{s}} \Dx^{\frac{1}{s}} && \text{(by \eqref{eq:V2})} \\
&= C r^{\frac{1}{s}}.
\end{align*}
Here, the constant is independent of $\Dx$. By combining the above estimate with \eqref{eq:V2}, we obtain that
\begin{equation}\label{eq:V3}
S_{r}^p(\mu_t^{\Dx}) \leq C r^{\frac{1}{s}}
\end{equation}
for any $r > 0$. 

Given the independence of the constant in \eqref{eq:V3} with respect to the grid size $\Dx$, we see from \eqref{eq:V3} that the condition of \emph{uniform diagonal continuity} \eqref{eq:timedepuniformdc}  is satisfied. Hence, up to a subsequence still indexed by $\Dx$, $\mu^{\Dx}_t$ converges to some $\mu \in \Prob_T(L^p(D;U))$.
\end{proof}

Several remarks on the assumptions in the above theorem follow.
\begin{remark}
There are many examples of finite volume/difference schemes of the form \eqref{eq:semi_d} which satisfy the uniform $L^p$ bound \eqref{eq:lpb} and the weak BV bound \eqref{eq:wbv}. Assume that the system of conservation laws \eqref{eq:cl} possesses an entropy function $\eta$ that satisfies
\begin{equation}\label{eq:aentropy}
C_1(1+|u|^p) \leq \eta(u) \leq C_2(1+|u|^p) \qquad \forall\ u \in U
\end{equation}
for constants $C_1,C_2 > 0$ and $p\in[1,\infty)$. Then the uniform $L^p$ bound \eqref{eq:lpb} follows for any scheme of form \eqref{eq:semi_d} that satisfies a \emph{discrete entropy inequality},
\begin{equation}\label{eq:denten}
\begin{split}
\ddt{}\eta\big(u^{\Dx}_\bi(t)\big)+\sum_{m=1}^d\frac{1}{\Dx}\left(Q^{m,\Dx}_{\bi+\frac{1}{2}\be_m}(t)-Q^{m,\Dx}_{\bi-\frac{1}{2}\be_m}(t)\right)\leq 0, \\
\end{split}
\end{equation}
with a numerical entropy flux $Q^{m,\Dx}$ that is consistent with the entropy flux $q^m$ in \eqref{eq:entrcond} for $1 \leq m \leq d$. 

In many cases the weak-BV bound \eqref{eq:wbv} also follows from the discrete entropy inequality \eqref{eq:denten}. Examples of schemes which satisfy the discrete entropy inequality \eqref{eq:denten} and the weak BV bound \eqref{eq:wbv} are the so-called entropy stable Lax--Wendroff schemes and the TeCNO schemes of \cite{FMT_TeCNO}. 
\end{remark}

\begin{remark}
The approximate scaling assumption \eqref{eq:sfscal} can be thought of as a weaker version of the so-called  \emph{self-similarity at small scales} assumption of Kolmogorov in his K41 theory for fully
developed turbulence in incompressible fluid flows, see hypothesis H2, equation (6.3), page 75 of \cite{fris1}. Kolmogorov based his hypothesis on the fact that solutions of the incompressible Navier--Stokes (Euler) equations scale exactly. Similar considerations also apply to several prototypical examples of systems of conservation laws \eqref{eq:cl}. In particular, for the compressible Euler equations \eqref{eq:euler} (in any space dimension), it can be readily checked that if $u(x,t)$ is a solution, then $\ell^{\theta} u(\ell x, \ell t)$ is also a solution for any $\theta,\ell >0$. Hence, it is reasonable to hypothesize scaling, analogous to the Kolmogorov hypothesis, for systems of conservation laws.

It is essential to also point out the differences in our hypothesis \eqref{eq:sfscal} to the standard Kolmogorov hypothesis for turbulent incompressible flows. First, our hypothesis pertains only to the numerical solution, generated by the finite volume scheme \eqref{eq:semi_d}. Moreover, we require mere inequalities in the scaling law \eqref{eq:sfscal}, in contrast to the standard Kolmogorov hypothesis of equality.  
\end{remark}

\begin{remark}
Intermittency is widely accepted to be a characteristic of turbulent flows, see \cite{fris1}. It is believed that intermittency stems from the fact that turbulent solutions do not scale exactly as in the Kolmogorov hypothesis. We automatically incorporate a form of intermittency by only requiring an upper bound in \eqref{eq:sfscal}, instead of an equality. Hence, the scaling exponent in \eqref{eq:sfscal} can depend explicitly on the underlying length scale, provided that it is bounded above by $1/s$. This encodes a form of intermittency in the approximate solutions. 
\end{remark}

\begin{remark}
Another approach to incorporating intermittency and relaxing the scaling condition \eqref{eq:sfscal} is to consider a decomposition of the approximate statistical solution $\mu^{\Dx}_t$ into a mean flow and a fluctuation. Defining the mean flow by
\begin{equation}
\label{eq:mf}
\widehat{u}^{\Dx}(x,t) = \langle \nu^{1,\Dx}_{t,x}, \xi \rangle,
\end{equation} 
we see that the mean flow is well defined for almost every $(x,t) \in D \times (0,T)$. Similarly, we can define \emph{fluctuations} of $\tilde{\mu}^\Dx \in \Prob_T(L^p(D;U))$ by its action on all observables $g\in \Caratheodory^{k,p}([0,T],D;U)$,
\begin{equation}
\label{eq:fluct1}
\Ypair{\tilde{\mu}^\Dx}{L_g} = \int_0^T \int_{L^p(D;U)} \int_{D^k} g\big(x,t, u(x) - \widehat{u}^{\Dx}(x,t)\big)\, dx d\tilde{\mu}^\Dx_t(u) dt. 
\end{equation}
We can relax the assumption \eqref{eq:sfscal} by requiring that only the structure function associated with the fluctuation scales approximately, i.e.,
\begin{equation}
\label{eq:sfscalfluct}
S^p_{\ell \Delta}(\tilde{\mu}^\Dx) \leq C \ell^{1/s} S^p_{\Delta}(\tilde{\mu}^\Dx) \qquad \forall\ \ell > 1.
\end{equation}
If we further assume that the mean flow is BV and $L^\infty$, i.e.,
\begin{equation}\label{eq:bv}
\max\left(\norm{\widehat{u}^{\Delta}}_{L^{\infty}((0,T) \times D)},\ \norm{\widehat{u}^{\Delta}}_{L^{\infty}((0,T),BV(D))}\right) \leq C,
\end{equation}
for some constant that is independent of the mesh size $\Dx$, then a straightforward but tedious calculation yields for any $r = \ell\Delta$
\begin{align*}
S^p_{r}(\mu^{\Delta}) &\leq \bar{C}\norm{\hat{u}^{\Delta}}_{L^{\infty}((0,T),BV(D))} r^{\frac{1}{p}} + S^p_{\ell \Delta}(\tilde{\mu}^\Dx) \\
&\leq \bar{C} r^{\frac{1}{p}} + C \ell^{\frac1{s}}  S^p_{\Delta}(\tilde{\mu}^\Dx) && \text{(by \eqref{eq:sfscalfluct})} \\
&\leq \bar{C} r^{\frac{1}{p}} + C \ell^{\frac1{s}} \Dx^{\frac{1}{s}} && \text{(by \eqref{eq:V2})} \\
&= \bar{C} r^{\frac{1}{p}} + C r^{\frac{1}{s}}.
\end{align*} 
Thus, the condition \eqref{eq:timedepuniformdc} in Theorem \ref{thm:timedepcmcompactness} is satisfied in this case. A similar argument can be made by imposing some form of (uniform) H\"older continuity on the mean flow. 
\end{remark}


\subsection{Consistency of the numerical method}
We fix an initial measure $\bar{\mu}\in \Prob(L^p(\D,\Ph))$. For any $u\in L^p(\D,\Ph)$ we define the local average of $u$ as
\[
\Avg{u}{\bi}=\frac{1}{|\mathcal{C}_\bi|}\int_{\mathcal{C}_\bi}u(x)\dd x\qquad \text{for }\bi\in\Z^d,
\]
where $|\mathcal{C}_\bi|$ denotes the $d$-dimensional Lebesgue measure of $\mathcal{C}_\bi$. 
We now state the ``Lax--Wendroff theorem'' for our numerical method, that is, consistency of the method with the PDE.

\begin{theorem}[Lax--Wendroff theorem for statistical solutions]\label{theo:lxw}
	Let the initial data $\bar{\mu}$ have bounded support, $\supp\bar{\mu}\subset B_K(0)\subset L^p(\D,U)$ for some $K>0$. Let $\mu^{\Delta}$ be given by \eqref{eq:statapp} for $\Dx>0$, and assume that for some sequence $\Delta_n\to0$, the sequence $\{\mu^{\Delta_n}\}_{n\in\N}$ converges strongly to $\mu$ in $\Prob_T(L^p(\D,\Ph))$, in the sense of Theorem \ref{thm:timedepcmcompactness}(v). Assume moreover that the following weak BV bound is fulfilled:
	\begin{equation}\label{eq:lxweakbv}
	\Dx^{d}\int_0^T \int_{L^p(\D)} \sum_{m=1}^d \sum_{\bi\in\GridIndexSetD} \left|\AvgS{u}_\bi - \AvgS{u}_{\bi-\be_m}\right|^s \; d\mu^{\Dx}_t(u)\;dt \leq C\Dx,
	\end{equation}
	for some $0<s$.
	Then $\mu_t$ is a statistical solution of \eqref{eq:cauchy}.
\end{theorem}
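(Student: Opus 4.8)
The plan is to run the classical Lax--Wendroff argument, but on the \emph{$k$-point tensorised} version of the scheme~\eqref{eq:semi_d}, for every $k\in\N$, and then to pass to the limit using the strong convergence of observables granted by Theorem~\ref{thm:timedepcmcompactness}(v). Fix $k\in\N$ and $\phi\in C_c^\infty(D^k\times\R_+;U^{\otimes k})$, and write $\bar{\cm}$, $\cm$ for the correlation measures dual to $\bar\mu$ and to the limit $\mu$; recall that $\mu$ is a statistical solution precisely when the quantity $I_k[\mu,\phi]$, defined as the left-hand side of the distributional form of~\eqref{eq:momentcorrmeas} in Definition~\ref{def:statsoln} tested against $\phi$, vanishes for every such $k$ and $\phi$. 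Introduce the tensorised observables $g_0(t,x,\xi):=(\xi_1\otimes\cdots\otimes\xi_k):\partial_t\phi(x,t)$ and, for $1\le l\le k$, $g_l(t,x,\xi):=(\xi_1\otimes\cdots\otimes f(\xi_l)\otimes\cdots\otimes\xi_k):\nabla_{x_l}\phi(x,t)$. One first checks that $g_0,g_1,\dots,g_k\in\Caratheodory^{k,p}_1([0,T),D;U)$: the growth bound~\eqref{eq:gboundedt} follows from the elementary inequality $\prod_{j}|\xi_j|\le 1+\sum_{\emptyset\ne S\subseteq\{1,\dots,k\}}\prod_{j\in S}|\xi_j|^p$ (valid for $p\ge1$, choosing $S=\{j:|\xi_j|\ge1\}$), combined with the flux growth bound $|f(\xi_l)|\le C(1+|\xi_l|^p)$ from~\eqref{eq:aflux} and the boundedness of $D$ (so that constants lie in $L^1(D^i)$), while the local Lipschitz bound~\eqref{eq:glipschitzt} comes from the multilinearity of $\xi\mapsto\xi_1\otimes\cdots\otimes\xi_k$, the smoothness and compact support of $\phi$, and the Lipschitz bound on $f$ in~\eqref{eq:aflux}. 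Since $\mu^{\Delta_n}_t=\pushforward{\NumericalEvolution{\Delta_n}_t}{\bar\mu}$ and the $\mu^{\Delta_n}$ have uniformly bounded support (part of the convergence hypothesis), the extended duality identity (cf.\ the remark after Theorem~\ref{thm:corrmeasduality}, in its time-dependent form~\eqref{eq:corrmeastimeduality}) gives, for $g\in\{g_0,\dots,g_k\}$,
\begin{multline*}
\int_0^T\!\!\int_{D^k}\Ypair{\nu^k_{n;t,x}}{g(t,x,\cdot)}\,dx\,dt\\
=\int_{L^p}\int_0^T\!\!\int_{D^k} g\bigl(t,x,u^{\Delta_n}(x_1,t),\dots,u^{\Delta_n}(x_k,t)\bigr)\,dx\,dt\,d\bar\mu(\bar u),\qquad u^{\Delta_n}(\cdot,t):=\NumericalEvolution{\Delta_n}_t\bar u,
\end{multline*}
and Theorem~\ref{thm:timedepcmcompactness}(v) shows the left-hand sides converge to the corresponding quantities for $\nu^k$; since the $\bar{\nu}^k$-term of $I_k$ does not depend on $n$, this yields $I_k[\mu^{\Delta_n},\phi]\to I_k[\mu,\phi]$. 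On the other hand, by the displayed identity, $I_k[\mu^{\Delta_n},\phi]=\int_{L^p}R^{\Delta_n}_k(\bar u,\phi)\,d\bar\mu(\bar u)$, where $R^{\Delta}_k(\bar u,\phi)$ is the continuous weak residual of~\eqref{eq:momentcorrmeas} evaluated on the piecewise-constant trajectory $u^{\Delta}=\NumericalEvolution{\Delta}\bar u$ (with $\bar{\nu}^k$-term built from $\bar u$). Hence it suffices to show that $\int_{L^p}R^{\Delta_n}_k(\bar u,\phi)\,d\bar\mu\to0$.

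\emph{Discrete Lax--Wendroff.} Fix $\bar u\in\supp\bar\mu$, abbreviate $u^{\Delta}=\NumericalEvolution{\Delta}\bar u$, and set $\mathcal{U}^{\Delta}_{\vec i}(t):=u^{\Delta}_{\bi_1}(t)\otimes\cdots\otimes u^{\Delta}_{\bi_k}(t)$ for $\vec i=(\bi_1,\dots,\bi_k)\in\GridIndexSetDK{k}$. The Leibniz rule applied to~\eqref{eq:semi_d} gives the tensorised ODE
\[
\frac{d}{dt}\mathcal{U}^{\Delta}_{\vec i}(t)=-\sum_{l=1}^k\sum_{m=1}^d\frac{1}{\Dx}\,u^{\Delta}_{\bi_1}\otimes\cdots\otimes\bigl(F^m_{\bi_l+\hf\be_m}-F^m_{\bi_l-\hf\be_m}\bigr)\otimes\cdots\otimes u^{\Delta}_{\bi_k}.
\]
Multiplying by $\Dx^{dk}\phi(x_{\bi_1},\dots,x_{\bi_k},t)$, summing over $\vec i$, integrating over $t\in(0,T)$, and performing Abel summation in $t$ and in each of the $k$ spatial blocks (using the compact support of $\phi$) produces the \emph{exact} identity
\begin{multline*}
\int_0^T\!\Dx^{dk}\!\!\sum_{\vec i}\mathcal{U}^{\Delta}_{\vec i}:\partial_t\phi\,dt+\Dx^{dk}\!\!\sum_{\vec i}\mathcal{U}^{\Delta}_{\vec i}(0):\phi\big|_{t=0}\\
+\int_0^T\!\Dx^{dk}\!\!\sum_{\vec i}\sum_{l,m}\bigl(u^{\Delta}_{\bi_1}\otimes\cdots\otimes F^m_{\bi_l+\hf\be_m}\otimes\cdots\otimes u^{\Delta}_{\bi_k}\bigr):D^{+}_{l,m}\phi\,dt=0,
\end{multline*}
where $D^{+}_{l,m}\phi$ is the forward difference quotient of $\phi$ in the $m$-th component of the $l$-th spatial variable, evaluated at cell midpoints.

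\emph{Consistency.} Because $u^{\Delta}$ is piecewise constant, replacing $D^{+}_{l,m}\phi$ by $\partial_{x^m_l}\phi$ and the midpoint sums by integrals turns the left-hand side of the exact identity into $R^{\Delta}_k(\bar u,\phi)$ plus quadrature/projection errors which, for each fixed $\bar u$, tend to $0$ (by $L^p$-convergence of the cell-averaging projection $\NumericalEvolution{\Delta}_0\bar u\to\bar u$ and uniform convergence of the piecewise-constant interpolant of $\phi$), and which are dominated uniformly over $\bar u\in\supp\bar\mu\subset B_K(0)$, hence vanish after integrating against $\bar\mu$ and letting $n\to\infty$. The one genuinely delicate term is the replacement of the numerical flux $F^m_{\bi_l+\hf\be_m}$ by $f^m(u^{\Delta}_{\bi_l})$: using~\eqref{eq:fvm_lipschitz}, the uniform $L^p$-boundedness of $u^{\Delta}(t)$ coming from the bounded support (which with $|D|<\infty$ gives $\Dx^{d}\sum_{\bi}|u^{\Delta}_{\bi}(t)|\le C$ by H\"older, controlling the $k-1$ spectator tensor slots), and factorisation of the multi-index sum over the "active'' and the "spectator'' slots, this error is bounded by $C\norm{\nabla\phi}_{L^\infty}$ times
\[
\Dx^{d}\int_0^T\!\sum_{m=1}^d\sum_{\bi\in\GridIndexSetD}\bigl|u^{\Delta}_{\bi}(t)-u^{\Delta}_{\bi-\be_m}(t)\bigr|\,dt.
\]
Integrating this over $\bar\mu$ \emph{first}, then applying H\"older with exponent $s$ jointly in $(\bi,m,t,\bar u)$ and invoking the weak BV bound~\eqref{eq:lxweakbv} together with the finiteness of $|D|$ and of the time interval on which $\phi$ is supported, one obtains a bound of order $\Dx^{1/s}$, which vanishes as $n\to\infty$.

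Combining the three steps: the discrete identity is exactly $0$ and equals $\int_{L^p}R^{\Delta_n}_k(\bar u,\phi)\,d\bar\mu$ up to errors that tend to $0$, so $I_k[\mu^{\Delta_n},\phi]\to0$; by the reduction step this forces $I_k[\mu,\phi]=0$. As $k$ and $\phi$ were arbitrary, and since $\mu\in\Prob_T(L^p(D;U))$ has bounded support (from the convergence hypothesis together with the lower semicontinuity in Theorem~\ref{thm:timedepcmcompactness}(iv) applied to $g=\phi\,|\xi_1|^p\cdots|\xi_k|^p$), $\mu$ is a statistical solution of~\eqref{eq:cauchy} with initial data $\bar\mu$. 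I expect the main obstacle to be the consistency step: one must order the H\"older estimates (over the grid, over time, and over the sample space $(L^p,\bar\mu)$) precisely so that the flux error lands on the \emph{$\bar\mu$-averaged} weak BV functional~\eqref{eq:lxweakbv}, rather than on a pointwise-in-$\bar u$ quantity which the hypotheses do not control, all while keeping the $k-1$ spectator tensor slots uniformly bounded --- which is exactly where $\supp\bar\mu\subset B_K(0)$ and the boundedness of $D$ enter. A secondary technical point is the verification that the tensorised observables $g_0,\dots,g_k$ belong to $\Caratheodory^{k,p}_1([0,T),D;U)$, for which the growth and Lipschitz structure of $f$ in~\eqref{eq:aflux} is essential.
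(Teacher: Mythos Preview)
Your proposal is correct and follows essentially the same approach as the paper: the Leibniz rule on the tensorised scheme, summation by parts, the consistency splitting $F^m_{\bi_l+\hf\be_m}=f^m(u^\Delta_{\bi_l})+\text{(error)}$ with the error controlled by~\eqref{eq:fvm_lipschitz} and the averaged weak BV bound~\eqref{eq:lxweakbv} via H\"older, and passage to the limit using the strong convergence of observables in Theorem~\ref{thm:timedepcmcompactness}(v). Your explicit verification that the tensorised integrands $g_0,\dots,g_k$ lie in $\Caratheodory^{k,p}_1([0,T),D;U)$, and your remark that the H\"older step must land on the $\bar\mu$-averaged BV functional rather than a pointwise one, make transparent two points that the paper's proof handles more implicitly.
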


Given the complicated notation and very technical nature of the proof of the above theorem, we illustrate the main steps of the proof in a very special case, namely $k=2$ for a one-dimensional scalar conservation law ($d=N=1$). The proof in the general case is postponed to \Cref{app:laxwendroff}.

\begin{proof}[Proof for the second moment of a scalar conservation law in one spatial dimension]
	We consider a scalar conservation law ($N=1$) in one spatial dimension ($d=1$). By~\eqref{eq:lpb}, there is some $K_T>0$ such that
	\begin{equation}\label{eq:boundedsupport}
	\supp\mu_t^\Dx \subset B_{K_T}(0) \qquad \forall\ t\in[0,T].
	\end{equation}
	Let $\{u_i^\Delta\}_{i\in\Z}$ be computed by \eqref{eq:semi_d}. Denote $F^\Dx = F^{1,\Dx}$ and, for $u\in L^p(D)$, write
	\[
	F_\iphf^\Dx(u) = F^\Dx\big(u_{i-q+1},\dots,u_{i+q}\big), \qquad i\in\Z.
	\]
	For all pairs of cells $i,j$, we have, by the product rule,
	\begin{align*}
	\ddt{}\big(u^{\Dx}_i(t)u^{\Dx}_j(t)\big)&=-\frac{1}{\Dx}\big(F^\Dx_\iphf(u^{\Dx}(t))-F^\Dx_\imhf(u^{\Dx}(t))\big)u^{\Dx}_j \\ 
	&\quad-\frac{1}{\Dx}u^{\Dx}_i\big(F^\Dx_\jphf(u^{\Dx}(t))-F^\Dx_\jmhf(u^{\Dx}(t)(t))\big).
	\end{align*}
	Hence, for arbitrary $\phi\in C_c^\infty(\R^2\times[0,T))$, we get
	\begin{align*}
	0 &=\int_0^T u^{\Dx}_i(t)u^{\Dx}_j(t)\partial_t\phi(x_i,x_j,t) \dd t + \phi(x_i,x_j,0)u^{\Dx}_j(0)u_i^{\Dx}(0) \\
	&\quad-\int_0^T\bigg(\frac{1}{\Dx}\big(F^\Dx_\iphf(u^{\Dx}(t)) -F^\Dx_\imhf(u^{\Dx}(t))\big)u^{\Dx}_j\\
	&\quad +\frac{1}{\Dx}u^{\Dx}_i\big(F^\Dx_\jphf\big(u^{\Dx}(t)) -F^\Dx_\jmhf(u^{\Dx}(t))\big)\bigg)\phi(x_i,x_j,t) \dd t.
	\end{align*}
	Multiply by $\Dx^2$, sum over all $i,j\in\Z$ and perform a summation-by-parts to obtain
	\begin{align*}
	0 &=\Dx^2\sum_{i,j\in\Z}\left(\int_0^T u^{\Dx}_i(t)u^{\Dx}_j(t)\partial_t\phi(x_i,x_j,t) \dd t + u^{\Dx}_j(0)u_i^{\Dx}(0)\phi(x_i,x_j,0)\right) \\
	&\quad+\Dx^2\sum_{i,j\in\Z}\int_0^T\bigg(F^\Dx_\iphf(u^{\Dx}(t))u^{\Dx}_j(t)\frac{\phi(x_{i+1},x_j,t)-\phi(x_i,x_j,t)}{\Dx}\\
	&\quad+u^{\Dx}_i(t)F^\Dx_\jphf(u^{\Dx}(t))\frac{\phi(x_i,x_{j+1},t)-\phi(x_i,x_j,t)}{\Dx}\bigg) \dd t \\
	&=\Dx^2\sum_{i,j\in\Z}\left(\int_0^T u^{\Dx}_i(t)u^{\Dx}_j(t)\partial_t\phi(x_i,x_j,t) \dd t + u^{\Dx}_j(0)u_i^{\Dx}(0)\phi(x_i,x_j,0)\right) \\
	&\quad+\Dx^2\sum_{i,j\in\Z}\int_0^T\bigg(F^\Dx_\iphf(u^{\Dx}(t))u^{\Dx}_j(t)\partial_1^\Dx\phi(x_i,x_j,t) +u^{\Dx}_i(t)F^\Dx_\jphf(u^{\Dx}(t))\partial_2^\Dx\phi(x_i,x_j,t)\bigg) \dd t
	\end{align*}
	where we have denoted $\partial_1^\Dx\phi(x,y,t)=\frac{\phi(x+\Dx,y,t)-\phi(x,y,t)}{\Dx}$, and similarly for $\partial_2^\Dx\phi$. From the special form \eqref{eq:statapp} of $\mu_t^\Dx$, we therefore have
	\begin{align*}
	0 &=\Dx^2\sum_{i,j\in\Z}\left(\int_0^T\int_{L^p} u_iu_j\partial_t\phi(x_i,x_j,t)\dd\mu_t^\Dx(u) \dd t + \int_{L^p}u_iu_j\phi(x_i,x_j,0)\,d\bar{\mu}(u)\right) \\
	&\quad+\Dx^2\sum_{i,j\in\Z}\int_0^T\int_{L^p}\bigg(F^\Dx_\iphf(u)u_j\partial_1^\Dx\phi(x_i,x_j,t)+u_iF^\Dx_\jphf(u)\partial_2^\Dx\phi(x_i,x_j,t)\bigg)\dd\mu^\Dx_t(u)\dd t.
	\end{align*}
	We write now
	\begin{align*}
	&\quad\Dx^2\sum_{i,j\in\Z}\int_0^T\int_{L^p}F^\Dx_\iphf(u)u_j\partial_1^\Dx\phi(x_i,x_j,t)\dd\mu^\Dx_t(u)\dd t \\
	&= \Dx^2\sum_{i,j\in\Z}\int_0^T\int_{L^p}f(u_i)u_j\partial_1^\Dx\phi(x_i,x_j,t)\dd\mu^\Dx_t(u)\dd t \\
	&\quad+\Dx^2\sum_{i,j\in\Z}\int_0^T\int_{L^p}\big(F^\Dx_\iphf(u)-f(u_i)\big)u_j\partial_1^\Dx\phi(x_i,x_j,t)\dd\mu^\Dx_t(u)\dd t.
	\end{align*}
	The last term vanishes as $\Dx\to0$, since 
	\begin{align*}
	&\quad\left|\Dx^2\sum_{i,j\in\Z}\int_0^T\int_{L^p}\big(F^\Dx_\iphf(u)-f(u_i)\big)u_j\partial_1^\Dx\phi(x_i,x_j,t)\dd\mu^\Dx_t(u)\dd t\right|\\
	&\leq\int_0^T\int_{L^p}\Dx^2\sum_{i,j\in\Z}\big|F^\Dx_\iphf(u)-f(u_i)\big| |u_j| |\partial_1^\Dx\phi(x_i,x_j,t)|\dd\mu^\Dx_t(u)\dd t\\
	&\leq \int_0^T\int_{L^p}\Biggl(\Dx\sum_{i\in\Z}\big|F^\Dx_\iphf(u)-f(u_i)\big|\bigl\|\partial_1^\Dx\phi(x_i,\cdot,t)\bigr\|_{L^{p'}(\R)}\Biggr)\Biggl(\Dx\sum_{j\in\Z}|u_j|^p\Biggr)^{1/p}\dd\mu^\Dx_t(u)\dd t\\
	&\leq K_T \int_0^T\int_{L^p}\Dx\sum_{i\in\Z}\big|F^\Dx_\iphf(u)-f(u_i)\big|\bigl\|\partial_1^\Dx\phi(x_i,\cdot,t)\bigr\|_{L^{p'}(\R)}\dd\mu^\Dx_t(u)\dd t\\
	\intertext{(\textit{by \eqref{eq:boundedsupport}})}
	&\leq K_T\int_0^T\int_{L^p}\Dx\sum_{i\in\Z}\sum_{i'=i-q+1}^{i+q}\big|u_i-u_{i'}\big|\bigl\|\partial_1^\Dx\phi(x_i,\cdot,t)\bigr\|_{L^{p'}(\R)}\dd\mu^\Dx_t(u)\dd t\\
		\intertext{(\textit{by the Lipschitz continuity \eqref{eq:fvm_lipschitz}})}
	&\leq CK_T\int_0^T\int_{L^p}\Dx\sum_{i\in\Z}\big|u_i-u_{i-1}\big|\bigl\|\partial_1^\Dx\phi(x_i,\cdot,t)\bigr\|_{L^{p'}(\R)}\dd\mu^\Dx_t(u)\dd t\\
	&\leq CK_T\underbrace{\Biggl(\int_0^T\int_{L^p}\Dx\sum_{i\in\Z}\big|u_i-u_{i-1}\big|^s\dd\mu^\Dx_t(u)\dd t\Biggr)^{1/s}}_{\to0 \text{ as } \Dx\to0, \text{ by \eqref{eq:lxweakbv}}} \underbrace{\Biggl(\int_0^T\Dx\sum_{i\in\Z}\bigl\|\partial_1^\Dx\phi(x_i,\cdot,t)\bigr\|_{L^{p'}(\R)}^{s'}\dd t\Biggr)^{1/s'}}_{\text{bounded as } \Dx\to0} \\
	&\to 0.
	\end{align*}
	A similar computation holds for the integral involving $F_\jphf^\Dx(u)$. Setting $\Dx=\Dx_n$ then gives
	\begin{align*}
	0&=\lim_{n\to\infty}\Biggl(\Dx_n^2\sum_{i,j\in\Z}\left(\int_0^T\int_{L^p} u_iu_j\partial_t\phi(x_i,x_j,t)\dd\mu_t^{\Dx_n}(u) \dd t + \int_{L^p}u_iu_j\phi(x_i,x_j,0)\dd\bar{\mu}(u)\right) \\
	&\quad+\Dx_n^2\sum_{i,j\in\Z}\int_0^T\int_{L^p}\Bigl(f(u_i)u_j\partial_1^{\Dx_n}\phi(x_i,x_j,t)+u_if(u_j)\partial_2^{\Dx_n}\phi(x_i,x_j,t)\Bigr)\dd\mu^{\Dx_n}_t(u)\dd t\Biggr) \\
	\intertext{(\textit{as $u$ is piecewise constant $\mu^{\Dx_n}_t$-almost surely})}
	&=\lim_{n\to\infty}\Biggl(\int_0^T\int_{L^p}\int_{\R^2} u(x)u(y)\partial_t\phi(x,y,t)\dd x d y\dd\mu_t^{\Dx_n}(u) \dd t + \int_{L^p}u(x)u(y)\phi(x,y,0)\dd x d y\dd\mu^\Dx_0(u) \\
	&\quad+\int_0^T\int_{L^p}\int_{\R^2}\Bigl(f(u(x))u(y)\partial_1\phi(x,y,t)+u(x)f(u(y))\partial_2\phi(x,y,t)\Bigr)\dd x d y\dd\mu^{\Dx_n}_t(u)\dd t\Biggr) \\
	&=\int_0^T\int_{L^p}\int_{\R^2} u(x)u(y)\partial_t\phi(x,y,t)\dd x d y\dd\mu_t(u) \dd t + \int_{L^p}u(x)u(y)\phi(x,y,0)\dd x dy\dd\bar{\mu}(u) \\
	&\quad+\int_0^T\int_{L^p}\int_{\R^2}\Bigl(f(u(x))u(y)\partial_1\phi(x,y,t)+u(x)f(u(y))\partial_2\phi(x,y,t)\Bigr)\dd x d y\dd\mu_t(u)\dd t,
	\end{align*}
	which completes the proof.
\end{proof}

\begin{remark}
We can readily show that the limit statistical solution $\mu_t$ is a {dissipative statistical solution}, assuming that the underlying finite volume method satisfies the discrete entropy inequality \eqref{eq:denten}. To this end, for every choice of coefficients $\alpha_1,\dots,\alpha_M > 0$ with $\sum_{i}^M \alpha_i = 1$ and every $\left(\bar{\mu}_1,\ldots,\bar{\mu}_M \right) \in \Lambda(\alpha,\bar{\mu})$, we construct $\mu^{\Dx}_{i,t} = \NumericalEvolution{\Dx}_t\#\bar{\mu}_i$ as the approximate statistical solution generated by the scheme \eqref{eq:semi_d}. By the convergence theorem \ref{theo:conv}, we can show that each $\mu^{\Dx}_{i,t}$ converges (possibly along a further subsequence), in the topology of Theorem \ref{thm:timedepcmcompactness}, to $\mu_{i,t} \in \Prob_T(L^p(D;U))$ as $\Dx \rightarrow 0$. By Theorem \ref{theo:lxw}, each $\mu_{i}$ is a statistical solution of \eqref{eq:cl} with initial data $\bar{\mu}_i$, and the condition \eqref{eq:dss3} is a straightforward consequence of the discrete entropy inequality \eqref{eq:denten} and the growth condition \eqref{eq:aentropy} with $p=2$.
\end{remark}

\subsection{Monte Carlo algorithm}
While \eqref{eq:statapp} provides an abstract definition of $\mu_t^{\Dx}$, it is not amenable to practical computations, since it requires the computation of the trajectory of the numerical solution operator for almost all possible initial data $\bar{u}\in\supp \bar{\mu}$. We will further approximate $\mu_t^{\Dx}$ by sampling it for a large ensemble of initial data, drawn from the initial probability measure. 

The Monte Carlo algorithm has been shown to be robust in tackling high dimensional problems with low regularity~\cite{mlmc_hyperbolic,mss1}, and has later been demonstrated to perform very well for computing measure valued solutions~\cite{fkmt}.

\begin{Algorithm}[Monte Carlo Algorithm]\label{alg:mc} ~\\
\begin{algorithm}[H]
 \KwData{Initial $\bar{\mu}\in \Prob(L^p(\D,\Ph))$,
 	mesh width $\Dx>0$, numerical evolution operator $\NumericalEvolution{\Dx}$, number of samples $M\in\N$}
 For some probability space $(\Omega,\mathbf{\Omega},\mathbb{P})$, let $\bar{u}_1,\dots,\bar{u}_M:\Omega\to L^p(D;U)$ be independent random variables with distribution $\bar{\mu}$\;
 \For{$m=1,\ldots, M$}{
  Evolve the sample in time, $u_m^\Dx(t)=\NumericalEvolution{\Dx}_t(\bar{u}_m)$
 }
 Estimate statistical solution by the empirical measure
 \begin{equation}
 \label{eq:mc}
 \mu_t^{\Dx,M}(\omega):=\frac{1}{M}\sum_{m=1}^M \delta_{u_m^\Dx(\omega; \cdot, t)}.
 \end{equation}
\end{algorithm}
\end{Algorithm}

In the rest of this paper we will refer to the above algorithm simply as ``the Monte Carlo Algorithm''. Note that for any admissible observable $g \in \Caratheodory^{k,p}_1([0,T],D;U)$, using \eqref{eq:statapp} and \eqref{eq:mc}, we obtain that
\begin{equation}\label{eq:mcapp}
    \begin{split}
    \Ypair{\mu^{\Delta}}{L_g} &= \int_0^T \int_{L^p}\int_{D^k} g(x,t,u) dx d\mu^{\Delta}_t dt 
    = \int_{L^p}\int_0^T\int_{D^k} g\left(x,t,\NumericalEvolution{\Dx}_t\bar{u}(x)\right) dx dt d\bar{\mu} \\
    &\approx \frac{1}{M} \sum_{m=1}^M \left(\int_0^T\int_{D^k} g\left(x,t,\NumericalEvolution{\Dx}_t\bar{u}_m(x)\right) dx dt\right)= \Ypair{\mu^{\Delta,M}}{L_g}.
    \end{split}
\end{equation}

\begin{remark}
	One should note that the probability measure $\mu^{\Delta,M}_T$ is indeed a random probability measure depending on some probability space $\Omega$ from which $\InitialData{u}_1,\ldots\InitialData{u}_M$ are being drawn.
\end{remark}

Using well known results for weak convergence of Monte Carlo~\cite{vanderVaart1996}, we can prove that the Monte Carlo approximation of the statistical solution converges as the number of samples is increased.

\begin{theorem}
Let $\bar{\mu}\in \Prob(L^p(\D,\Ph))$ have bounded support, let $\NumericalEvolution{\Dx}$ be some numerical evolution operator, and let $\mu_t^{\Dx,M}$ be defined through the Monte Carlo Algorithm. Let $\mu_t^{\Dx}$ be defined by \eqref{eq:statapp}. Then for every admissible observable $g \in \Caratheodory^{k,p}_1([0,T],D;U)$, we have
\begin{equation}
\label{eq:mcest}
\E\bigg[\Ypair{\mu_T^{\Dx,M}-\mu_T^{\Dx}}{L_g}^2\bigg]\leq \frac{\Ypair{\mu_T^{\Dx}}{L_g^2}-\Ypair{\mu_T^{\Dx}}{L_g}^2}{M},
\end{equation}
where 
\begin{equation*}
\Ypair{\mu_T^{\Dx}}{L_g^2}:=\int_{L^p}\int_{\D^2}g(x,u(x))g(y,u(y))\,  dx dy\, d\mu_T^{\Dx}(u).
\end{equation*}
\end{theorem}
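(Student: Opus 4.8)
The plan is to recognize $\Ypair{\mu_T^{\Dx,M}}{L_g}$ as an empirical mean of independent, identically distributed scalar random variables, and then to invoke the elementary fact that the variance of an average of $M$ such variables equals $1/M$ times the variance of a single one. Concretely, I would set
\[
X_m := \Ypair{\delta_{u_m^\Dx}}{L_g} = L_g\bigl(\NumericalEvolution{\Dx}_{\,\cdot}\,\bar u_m\bigr), \qquad m=1,\dots,M,
\]
where $u_m^\Dx = \NumericalEvolution{\Dx}_{\,\cdot}\,\bar u_m$ is the trajectory produced in the Monte Carlo Algorithm. Since $\NumericalEvolution{\Dx}$ is a fixed measurable map (Section~\ref{sec:fvm}) and $L_g$ is measurable on $L^p$ (Lemma~\ref{lem:lgcont}), each $X_m$ is a measurable function of $\bar u_m$ alone; as the $\bar u_1,\dots,\bar u_M$ are i.i.d.\ with law $\bar\mu$, the $X_m$ are i.i.d.\ real random variables. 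By \eqref{eq:mc} (see also \eqref{eq:mcapp}), $\Ypair{\mu_T^{\Dx,M}}{L_g} = \frac1M\sum_{m=1}^M X_m$, while $\Ypair{\mu_T^{\Dx}}{L_g}$ is a deterministic number.

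Next I would compute the first two moments of $X_1$ by the change-of-variables formula for the pushforward $\mu_T^\Dx = \pushforward{\NumericalEvolution{\Dx}_{\,\cdot}}{\bar\mu}$ from \eqref{eq:statapp}:
\[
\E[X_1] = \int_{L^p} L_g\bigl(\NumericalEvolution{\Dx}_{\,\cdot}\,\bar u\bigr)\,d\bar\mu(\bar u) = \int_{L^p} L_g(u)\,d\mu_T^\Dx(u) = \Ypair{\mu_T^\Dx}{L_g},
\]
and, using the pointwise identity $L_g(u)^2 = \int_{D^k}\!\int_{D^k} g(x,u(x))\,g(y,u(y))\,dx\,dy$, which is immediate from the definition \eqref{eq:Lgdef} of $L_g$ together with Fubini's theorem,
\[
\E[X_1^2] = \int_{L^p} L_g(u)^2\,d\mu_T^\Dx(u) = \Ypair{\mu_T^\Dx}{L_g^2}.
\]
Hence $\Var(X_1) = \Ypair{\mu_T^\Dx}{L_g^2} - \Ypair{\mu_T^\Dx}{L_g}^2$.

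Finally, combining the above and using independence of the $X_m$ (so that the mixed second moments factor and cancel against the square of the mean),
\[
\E\bigl[\Ypair{\mu_T^{\Dx,M}-\mu_T^{\Dx}}{L_g}^2\bigr] = \E\Bigl[\Bigl(\tfrac1M\textstyle\sum_{m=1}^M X_m - \E X_1\Bigr)^2\Bigr] = \frac{\Var(X_1)}{M} = \frac{\Ypair{\mu_T^\Dx}{L_g^2} - \Ypair{\mu_T^\Dx}{L_g}^2}{M},
\]
which is the asserted bound (in fact with equality). There is no real obstacle in this argument; the only point requiring a word of care is the integrability that legitimizes the moment computations and the interchanges of integrals. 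Since $g\in\Caratheodory^{k,p}_1$, the functional $L_g$ is bounded on bounded subsets of $L^p(D;U)$ by Lemma~\ref{lem:lgcont}, and $\supp\bar\mu$ is bounded by hypothesis, so $\NumericalEvolution{\Dx}_{\,\cdot}$ carries it into a bounded set and hence $X_1$ is a bounded random variable; in particular all quantities above are finite (and if one drops the stability of the scheme, so that $\Ypair{\mu_T^\Dx}{L_g^2}=+\infty$, the inequality \eqref{eq:mcest} holds trivially).
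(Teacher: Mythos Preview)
Your argument is correct and is precisely the ``standard Monte Carlo argument'' the paper invokes (the paper does not spell out a proof, merely referring to the analogous result in \cite{FLyeM1}). The identification of $\Ypair{\mu_T^{\Dx,M}}{L_g}$ as an empirical mean of i.i.d.\ scalar variables with law determined by the pushforward \eqref{eq:statapp}, followed by the elementary variance computation, is exactly what is intended.
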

The proof of the above theorem follows by standard arguments for proving convergence of Monte Carlo approximations. It is analogous to the proof of Monte Carlo convergence to statistical solutions of scalar conservation laws, see Theorem~2 of \cite{FLyeM1}. Note that the right hand side in \eqref{eq:mcest} is bounded on account of the hypothesis \eqref{eq:gboundedt} on admissible observables $g \in \Caratheodory^{k,p}_1([0,T],D;U)$.

\section{Numerical experiments}
\label{sec:5}

For all the numerical experiments in this section, we consider the two-dimensional compressible Euler equations, 
\begin{equation}
\label{eq:euler}
\pdpd{}{t}
\begin{pmatrix}
\rho\\
\rho w^x\\
\rho w^y\\
E
\end{pmatrix}
+
\pdpd{}{x_1}
\begin{pmatrix}
\rho w^x\\
\rho \left(w^x\right)^2+p\\
\rho w^x w^y\\
(E+p)w^x
\end{pmatrix}
+
\pdpd{}{x_2}
\begin{pmatrix}
\rho w^y\\
\rho w^xw^y\\
\rho \left(w^y\right)^2+p\\
(E+p)w^y
\end{pmatrix}
=
0.
\end{equation} 
The system is closed with the equation of state
\[E=\frac{p}{\gamma-1}+\frac{\rho\left(\left(w^x\right)^2+\left(w^y\right)^2\right)}{2}.\]
We set $\gamma=1.4$ for all experiments.

\subsection{Kelvin--Helmholtz problem}\label{sec:kh}
We start with this well-known test case for the development of instabilities in fluid flows, which was also extensively studied in \cite{fkmt}.

The Kelvin--Helmholtz initial data is a shear flow, separating two states of varying density and pressure,
\begin{equation}\label{eq:kh}
u_0(\omega; x_1,x_2)=\begin{cases}u_L\qquad I_1(\omega; x_1)\leq x_2\leq I_2(\omega; x_1)\\
u_R\qquad \text{otherwise.}\end{cases}
\qquad (x_1,x_2)\in D = [0,1]^2
\end{equation}
We assign periodic boundary conditions, and the two states are given as 
$\rho_L=2$, $\rho_R=1$, $w^x_L=-0.5$, $w^x_R=0.5$, $w^y_L=w^x_L=0$ and $p_L=p_R=2.5$. The interfaces between the two states are given as

\begin{equation}\label{eq:kh_pert}
I_i(x, \omega) = \frac{2(i-1)+1}{4}+\epsilon \sum_{j=1}^K a^i_j(\omega)\sin(2\pi(x+b^i_j(\omega))),
\end{equation}
where $K=10$, $\epsilon>0$, and $\{a^i_j\}$ and $\{b^i_j\}$ are uniformly distributed random variables on the interval $[0,1]$. We normalize the $a^i_j$ such that $\sum_j a^i_j=1$.

The initial measure $\bar{\mu}$ is given by the distribution of the random field $u_0$. Note that although $\bar{\mu}$ is a probability measure on the infinite-dimensional space $L^p(D;U)$, it is only concentrated on a 40-dimensional subset of this space.

As was already shown in \cite{fkmt}, there is no convergence for single realizations (samples) of the problem \eqref{eq:euler},\eqref{eq:kh} as the mesh is refined. We observe this behaviour from Figure \ref{fig:kh_single_sample}, where we display the approximate density at time $T=2$, computed with a second-order high-resolution finite volume scheme using an HLLC approximate Riemann solver and WENO reconstruction, together with a second-order SSP Runge--Kutta time integrator, on a sequence of successively refined meshes. As seen from the figure, structures at finer and finer scales are generated upon mesh refinement, impeding convergence. This lack of convergence is also verified from Figure \ref{fig:KHconv1} (A), where the so-called Cauchy rates of the density, i.e.,~quantities of the form
\begin{equation}
    \label{eq:crate}
    \mathrm{Cauchy}_p(\Psi,\Dx,T):= \|\Psi^{\Dx}(\cdot,T) - \Psi^{\frac{\Dx}{2}}(\cdot,T) \|_{L^p(D)},
\end{equation}
with $\Psi^{\Delta}$ being any function computed with \eqref{eq:semi_d} on a mesh with mesh size $\Dx$.
\begin{figure}[h]
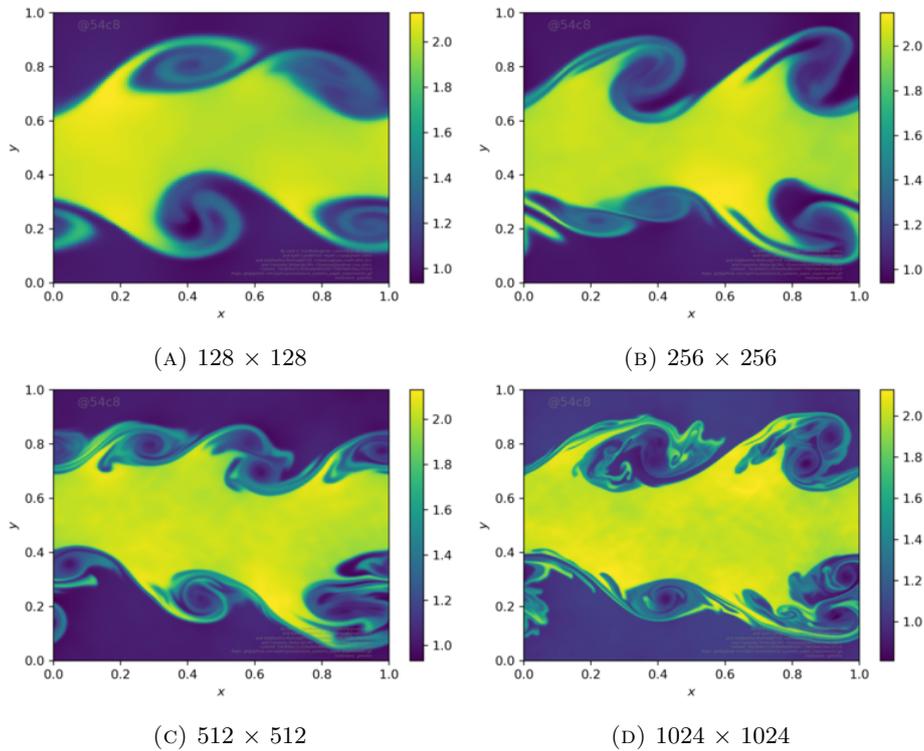

	\begin{subfigure}[b]{0.48\textwidth}
		\InputImage{\textwidth}{0.8\textwidth}{kh_single_128_0}
		\caption{\MeshResolution{128}}
	\end{subfigure}
	\begin{subfigure}[b]{0.48\textwidth}
		\InputImage{\textwidth}{0.8\textwidth}{kh_single_256_0}
		\caption{\MeshResolution{256}}
	\end{subfigure}
\begin{subfigure}[b]{0.48\textwidth}
	\InputImage{\textwidth}{0.8\textwidth}{kh_single_512_0}
	\caption{\MeshResolution{512}}
\end{subfigure}
\begin{subfigure}[b]{0.48\textwidth}
	\InputImage{\textwidth}{0.8\textwidth}{kh_single_1024_0}
	\caption{\MeshResolution{1024}}
\end{subfigure}
\caption{The approximate density of the Kelvin--Helmholtz instability \eqref{eq:kh} with a fixed $\omega\in\Omega$ for different mesh resolutions with the same initial data.The scheme used is an HLL3 flux with a WENO2 reconstruction algorithm. In this experiment, $\epsilon=0.01$ and $T=2$.}
\label{fig:kh_single_sample}
\end{figure}
On the other hand, the theory developed in Section \ref{sec:fvm} suggests that observables  $g \in \Caratheodory^{k,p}_1([0,T],D;U)$, for all $k$, should converge on mesh refinement. We start by considering observables with respect of the first marginal $\nu^1$ of the underlying approximate statistical solution. In particular, we consider the mean and variance given by
\begin{equation}
    \label{eq:mandv}
    M^{\Delta}(x,t):= \langle \nu^{1,\Dx}_{x,t}, \xi \rangle, \quad V^{\Delta}(x,t):= \langle \nu^{1,\Dx}_{x,t}, \xi^2 - (M^{\Delta}(x,t))^2 \rangle.  
\end{equation}
The above quantities are defined a.e.~in $D \times [0,T]$ and $\nu^{1,\Dx}$ is the first marginal of the approximate statistical solution $\mu_t^{\Dx,M}$ generated by the Monte Carlo Algorithm. It is straightforward to check that the mean and the variance are admissible observables, in the sense of Convergence Theorem \ref{thm:timedepcmcompactness}. 

We plot the mean and the variance of the density at time $T=2$ in Figure \ref{fig:KHmandv}. As seen from this figure, and in contrast to single samples, the mean and variance clearly converge upon mesh refinement. Moreover, the variance is also concentrated along the so-called \emph{mixing zone}, which spreads out from the initial interface. The convergence of the mean and the variance is further verified from Figure \ref{fig:KHconv1}(B,C), where the Cauchy rates \eqref{eq:crate} of the mean and variance $M^\Dx, V^\Dx$ are displayed as a function of mesh resolution. 
\begin{figure}[h]
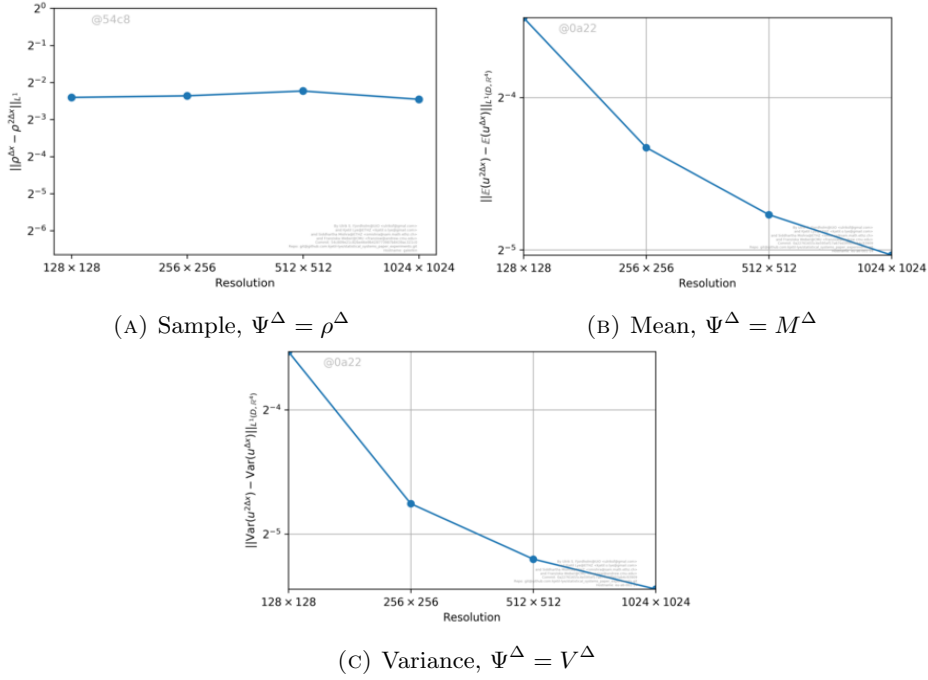

	\begin{subfigure}[b]{0.48\textwidth}
		\InputImage{\textwidth}{0.8\textwidth}{kh_single_l1_convergence_0}
		\caption{Sample, $\Psi^\Delta =\rho^\Delta$}
	\end{subfigure}
	\begin{subfigure}[b]{0.48\textwidth}
		\InputImage{\textwidth}{0.8\textwidth}{daintconvergence_mean_kelvinhelmholtz_00100_1}
		\caption{Mean, $\Psi^\Delta = M^\Delta$}
	\end{subfigure}
\begin{subfigure}[b]{0.48\textwidth}
	\InputImage{\textwidth}{0.8\textwidth}{daintconvergence_variance_kelvinhelmholtz_00100_1}
	\caption{Variance, $\Psi^\Delta = V^\Delta$}
\end{subfigure}
\caption{Cauchy rates \eqref{eq:crate} for the approximate density in the  Kelvin--Helmholtz problem \eqref{eq:kh} for different mesh resolutions. The scheme used is a HLL3 flux with a WENO2 reconstruction algorithm. In this experiment, $\epsilon=0.01$. Here $T=2$.}
\label{fig:KHconv1}
\end{figure}

\begin{figure}[h]
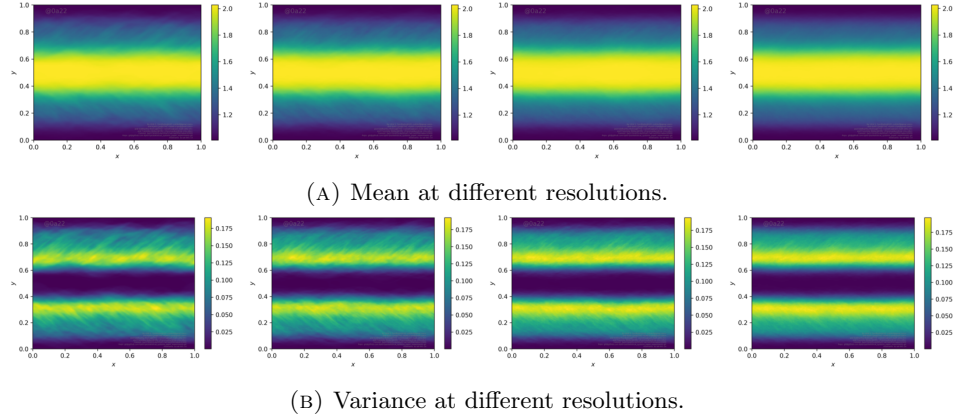

	
	\begin{subfigure}[b]{\textwidth}
		\InputImage{0.24\textwidth}{0.8\textwidth}{daintmean_kelvinhelmholtz_256_00100_1}
		\InputImage{0.24\textwidth}{0.8\textwidth}{daintmean_kelvinhelmholtz_512_00100_1}
		\InputImage{0.24\textwidth}{0.8\textwidth}{daintmean_kelvinhelmholtz_1024_00100_1}
		\InputImage{0.24\textwidth}{0.8\textwidth}{daintmean_kelvinhelmholtz_2048_00100_1}
		\caption{Mean at different resolutions.}
	\end{subfigure}

	\begin{subfigure}[b]{\textwidth}
		\InputImage{0.24\textwidth}{0.8\textwidth}{daintvariance_kelvinhelmholtz_256_00100_1}
		\InputImage{0.24\textwidth}{0.8\textwidth}{daintvariance_kelvinhelmholtz_512_00100_1}
		\InputImage{0.24\textwidth}{0.8\textwidth}{daintvariance_kelvinhelmholtz_1024_00100_1}
		\InputImage{0.24\textwidth}{0.8\textwidth}{daintvariance_kelvinhelmholtz_2048_00100_1}
		\caption{Variance at different resolutions.}
	\end{subfigure}
	\caption{The approximate mean (top row) and variance (bottom row) of the density of the Kelvin--Helmholtz instability \eqref{eq:kh} for mesh resolutions of (from left to right) $128^2$, $256^2$, $512^2$ and $1024^2$ points. The scheme used is an HLL3 flux with a WENO2 reconstruction algorithm. In this experiment, $\epsilon=0.01$. The number of samples used, $M$, was set equal to the resolution $N$, and $T=2$.}
	\label{fig:KHmandv}
\end{figure}
To quantify the convergence of the distribution of $\mu^{\Dx,M}$ we consider the following Cauchy rates, 
\begin{equation}\label{eq:LpWp}
    {\mathcal W}^{k,\Dx}_p(T):= \left( \int_{D^k} \left(W_p\big(\nu^{k,\Dx}_{x,T},\nu^{k,\frac{\Dx}{2}}_{x,T}\big)\right)^p dx \right)^{\frac{1}{p}},
\end{equation}
with $W_p$ being the Wasserstein metric defined in \eqref{eq:wasserdef} and $\nu^{k,\Dx}$, the $k$-th correlation marginal, corresponding to the (approximate) statistical solution $\mu_t^{\Dx,M}$ generated by the Monte Carlo Algorithm. One can check that for all $k \in \N$
\begin{equation}
    \label{eq:wcauchy1}
{\mathcal W}^{k,\Dx}_p(t) \leq C(k,p) W_p\left(\mu^{\Dx}_t, \mu^{\frac{\Dx}{2}}_t \right) \quad \text{for a.e. }t\in (0,T)
\end{equation}
(see Appendix \ref{app:wcauchyproof} for a proof). As the Wasserstein metric metrizes the weak topology on probability measures, we may conclude from Theorems \ref{thm:timedepcmcompactness} and \ref{theo:conv} that under the assumptions of some form of time continuity, the right hand side of \eqref{eq:wcauchy1} goes to zero as $\Dx \rightarrow 0$. This convergence is verified in Figure \ref{fig:KHconv2} (A), where we plot the corresponding Cauchy rates for the distance \eqref{eq:LpWp} with respect to the density (see Appendix~\ref{app:wasserstein} for details about how the Wasserstein distance was computed numerically).

Next, we consider computation of observables with respect to the \emph{second correlation marginal} $\nu^{2,\Delta}_{x,y,t}$ of the approximate statistical solution $\mu^{\Dx,M}_t$. The most interesting observable in this regard is the approximate \emph{structure function} $S^p_r(\mu^{\Dx,M}_t,T)$ \eqref{eq:sf}. This is clearly an admissible observable in the sense of Theorem \ref{thm:timedepcmcompactness}. For computational purposes, it is easier to compute the \emph{time-sections} of the structure function, namely
\begin{equation}    \label{eq:sft}
    \omega_r^p\big(\nu^{2,\Dx,M}_t\big) := \int_D \intavg_{B_r(x)} \Ypair{\nu^{2,\Dx,M}_{x,y,t}}{|\xi_1-\xi_2|^p}\,dydx
\end{equation}
for $t\in[0,T)$. In Figure \ref{fig:KHsf} we plot $\omega^2_{r}\big(\nu^{2,\Dx,M}_T\big)^{1/p}$ for a sequence of mesh sizes $\Dx$. Moreover, we consider three different setups in the figure. In Figure \ref{fig:KHsf} (A) and (B), we set $\epsilon = 0.1$ in \eqref{eq:kh_pert} and $T=2$ and $T=4$, respectively, and in Figure \ref{fig:KHsf} (C) we set $\epsilon = 0.01$ and $T=2$. As seen from the figures, it is clear that the approximate structure functions converge as the mesh is refined. Moreover, the structure functions (approximately) behave as 
\begin{equation}
    \label{eq:sfnum}
  \omega^p_{r}\big(\nu^{2,\Dx,M}_T\big)^{1/p} \sim C(T) r^{\theta_p(T)}.
\end{equation}
The computed values of $\theta_p(T)$ are seen in the legend in Figure \ref{fig:KHsf}. 

The numerical convergence of structure functions is further verified in Figure \ref{fig:KHconv2} (B), where we plot the Cauchy rates \eqref{eq:crate} with $g(r,T) =\omega^2_{r}\big(\nu^{2,\Dx,M}_T\big)$ with $p=1$. In this figure $T=2$ and $\epsilon = 0.01$. Clearly, the structure function (as a function of length scale $r$) converges as the mesh is refined.

\begin{figure}[h]
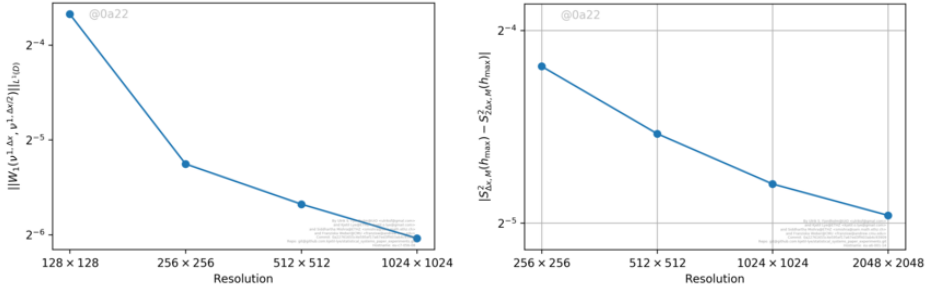
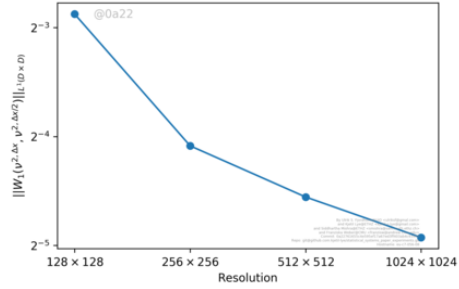

	\begin{subfigure}[b]{0.48\textwidth}
		\captionsetup{width=.9\linewidth}%
		\InputImage{\textwidth}{0.8\textwidth}{kelvinhelmholtz_wasserstein_convergence_1pt}
		\caption{${\mathcal W}_1^{1,\Delta}(T)$ as in \eqref{eq:LpWp} (vertical axis) plotted against $\Delta^{-1}$ (horizontal axis) with $\epsilon = 0.05$ and $T=2$.}
	\end{subfigure}
	\begin{subfigure}[b]{0.48\textwidth}
		\captionsetup{width=.9\linewidth}%
		\InputImage{\textwidth}{0.8\textwidth}{daintconvergence_kelvinhelmholtz_2_001_1}
		\caption{Cauchy rates \eqref{eq:crate} with $\Psi^\Delta(r,T)=\omega^2_{r}\big(\nu^{2,\Dx,M}_T\big)$ with $p=1$ and $r=1/32$, $T=2$ and $\epsilon = 0.01$.}
	\end{subfigure}
\begin{subfigure}[b]{0.48\textwidth}
	\InputImage{\textwidth}{0.8\textwidth}{kelvinhelmholtz_wasserstein_convergence_2pt}
	\caption{${\mathcal W}_1^{2,\Delta}(T)$ \eqref{eq:LpWp} (vertical axis) vs $\Delta^{-1}$ (horizontal axis) with $\epsilon = 0.05$ and $T=2$.}
\end{subfigure}
\caption{Cauchy rates for the Kelvin--Helmholtz problem.} 
\label{fig:KHconv2}
\end{figure}

\begin{figure}[h]
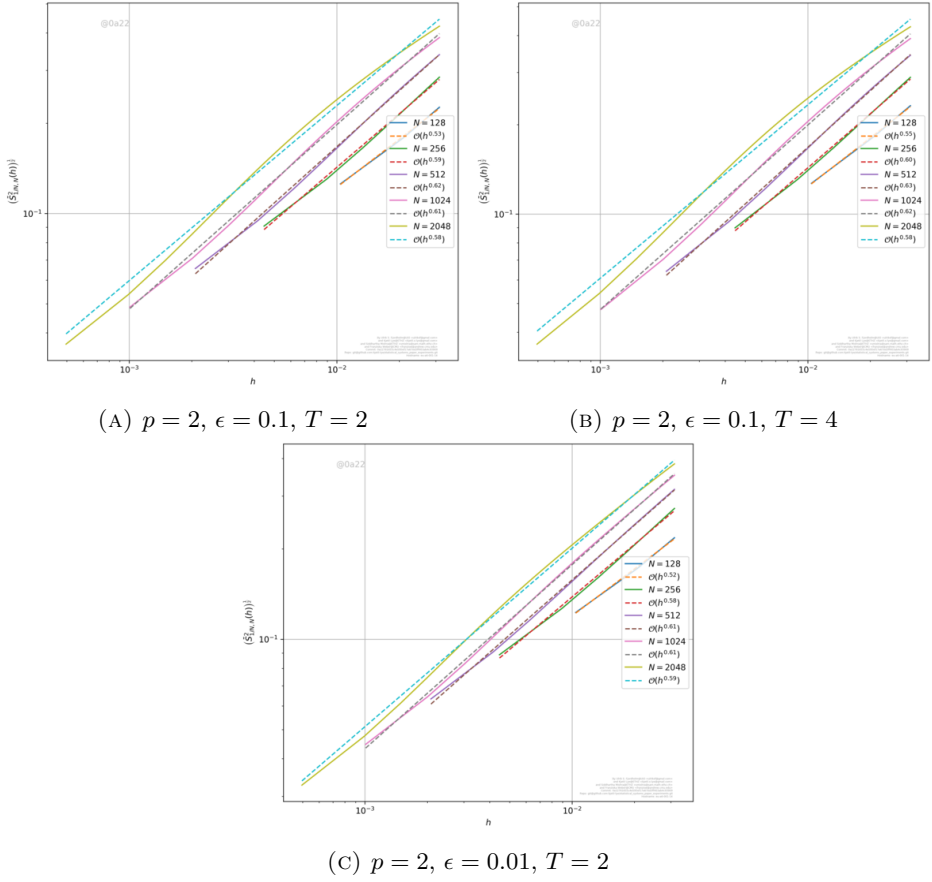

\begin{subfigure}[b]{0.48\textwidth}
	\InputImage{\textwidth}{0.8\textwidth}{daintscaling_kelvinhelmholtz_2_01_1}
	\caption{$p=2$, $\epsilon=0.1$, $T=2$}
\end{subfigure}
\begin{subfigure}[b]{0.48\textwidth}
	\InputImage{\textwidth}{0.8\textwidth}{daintscaling_kelvinhelmholtz_2_01_2}
		\caption{$p=2$, $\epsilon=0.1$, $T=4$}
\end{subfigure}
\begin{subfigure}[b]{0.48\textwidth}
	\InputImage{\textwidth}{0.8\textwidth}{daintscaling_kelvinhelmholtz_2_001_1}
	\caption{$p=2$, $\epsilon=0.01$, $T=2$}
\end{subfigure}
\caption{Structure functions \eqref{eq:sfnum} for the Kelvin--Helmholtz instability \eqref{eq:kh} for different times $T$, exponents $p$ and perturbation sizes $\epsilon$.  The scheme used is a HLL3 flux with a WENO2 reconstruction algorithm. At each mesh resolution $N$, $M=N$ samples were used.}
\label{fig:KHsf}
\end{figure}

Finally, we compute the Wasserstein Cauchy rates ${\mathcal W}_1^{2,\Delta}(T)$ \eqref{eq:LpWp} with respect to the density, over successively refined mesh sizes and display the result in figure \ref{fig:KHconv2} (c). This figure clearly shows that there is convergence (with respect to mesh resolution and number of Monte Carlo samples) in this metric.

We are interested in computing the two-point correlation marginal $\nu^{2,\Dx,M}_{x,y,t}$ for point pairs $(x,y) \in D$. In this context, we realize this Young measure by empirical histograms, plotted in Figure \ref{fig:KHhist}. In this figure we show the empirical histogram of the two-point correlation Young measure of the density, on successively refined grids, for two different point pairs $x = (0.7,0.7),$ $y =(0.4,0.2)$ (top 2 rows) and $x = (0.7,0.7)$, $y =(0.7,0.8)$ (bottom 2 rows). We see from this figure that the empirical histograms of the two-point correlation marginals converge (visually) on mesh refinement. Moreover, there is a clear difference in the correlation structures at different point pairs.

\begin{figure}[htbp]
	\begin{subfigure}[b]{0.48\textwidth}
		\InputImage{\textwidth}{0.8\textwidth}{hist2pt_surface_kelvinhelmholtz_128_07_07_04_02}
		\caption{$N=128$}
	\end{subfigure}
	\begin{subfigure}[b]{0.48\textwidth}
		\InputImage{\textwidth}{0.8\textwidth}{hist2pt_surface_kelvinhelmholtz_256_07_07_04_02}
		\caption{$N=256$}
	\end{subfigure}
	\begin{subfigure}[b]{0.48\textwidth}
		\InputImage{\textwidth}{0.8\textwidth}{hist2pt_surface_kelvinhelmholtz_512_07_07_04_02}
		\caption{$N=512$}
	\end{subfigure}
	\begin{subfigure}[b]{0.48\textwidth}
		\InputImage{\textwidth}{0.8\textwidth}{hist2pt_surface_kelvinhelmholtz_1024_07_07_04_02}
		\caption{$N=1024$}
	\end{subfigure}
	
	\begin{subfigure}[b]{0.48\textwidth}
		\InputImage{\textwidth}{0.8\textwidth}{hist2pt_surface_kelvinhelmholtz_128_07_07_07_08}
		\caption{$N=128$}
	\end{subfigure}
	\begin{subfigure}[b]{0.48\textwidth}
		\InputImage{\textwidth}{0.8\textwidth}{hist2pt_surface_kelvinhelmholtz_256_07_07_07_08}
		\caption{$N=256$}
	\end{subfigure}
	\begin{subfigure}[b]{0.48\textwidth}
		\InputImage{\textwidth}{0.8\textwidth}{hist2pt_surface_kelvinhelmholtz_512_07_07_07_08}
		\caption{$N=512$}
	\end{subfigure}
	\begin{subfigure}[b]{0.48\textwidth}
		\InputImage{\textwidth}{0.8\textwidth}{hist2pt_surface_kelvinhelmholtz_1024_07_07_07_08}
		\caption{$N=1024$}
	\end{subfigure}
	\caption{Two-dimensional histograms for the correlation measure at $((0.7,0.7), (0.4,0.2))$ (top two rows) and $((0.7,0.7), (0.4,0.2))$ (bottom two rows) for different resolutions for the density in the Kelvin--Helmholtz problem \eqref{eq:kh}. The scheme used is an HLL3 flux with a WENO2 reconstruction algorithm. Here, $T=2$ and $\epsilon=0.05$, and we use $M=1024$ samples.}
	\label{fig:KHhist}
\end{figure}
\subsection{Richtmeyer--Meshkov problem}\label{sec:rm}
Our second test case is the well-studied Richtmeyer--Meshkov problem (see \cite{fkmt} and references therein), which involves a very complicated solution of the compressible Euler equations \eqref{eq:euler}, modeling the complex interaction of strong shocks with unstable interfaces. The underlying initial data is given as
\begin{equation}\label{eq:rm}
p(x)=\begin{cases}
20 & \text{if } |x|<0.1\\
1 & \text{otherwise.}
\end{cases}\qquad \rho(x) = \begin{cases} 2 & \text{if }   |x|< I(x,\omega)\\
1 & \text{otherwise}\end{cases} \quad w^x=w^y=0
\end{equation}
We assign periodic boundary conditions on $D=[0,1]^2$. The interface between the two states is given as
\begin{equation}
I(x, \omega) = 0.25+\epsilon \sum_{j=1}^K a_j(\omega)\sin(2\pi(x+b_j(\omega))),
\end{equation}
where $K=10$, $\epsilon>0$, and $\{a_j\}$ and $\{b_j\}$ are uniform random variables on the interval $[0,1]$. We normalize the $a_j$ such that $\sum_j a_j=1$. The initial probability measure $\bar{\mu}$ is given by the law of the above random field, and lies in $\Prob(L^p(D))$ for every $1 \leq p<\infty$. 

As in the case of the Kelvin--Helmholtz problem, there is no convergence (on mesh refinement) for single samples (realizations). This non-convergence is demonstrated in Figure \ref{fig:RMconv}(A), where the Cauchy rates \eqref{eq:crate} with respect to the density at time $T=5$ are shown. We visualize the density for different mesh resolutions in Figure \ref{fig:RMvis} (top row). As seen from this figure, the solution at this time is very complicated on account of the interaction between the incoming strong shock (which has been reflected, due to periodic boundary conditions) and the unstable interface, which leads to the generation of turbulent small scale eddies. 

On the other hand, and as predicted by the convergence theory developed in Section \ref{sec:fvm}, statistical observables such as the mean and the variance \eqref{eq:mandv} converge on mesh refinement, as shown in Figure \ref{fig:RMvis} (middle and bottom). Furthermore, this figure shows how the small scale structures are averaged out in the mean, whereas the small scale information is encoded in the variance, which is concentrated around the mixing zone. We also verify the convergence of the Wasserstein distance ${\mathcal W}_1^{2,\Delta}(T)$ with respect to the density for successively refined meshes in Figure \ref{fig:RMconv} (B). 

\begin{figure}[h]
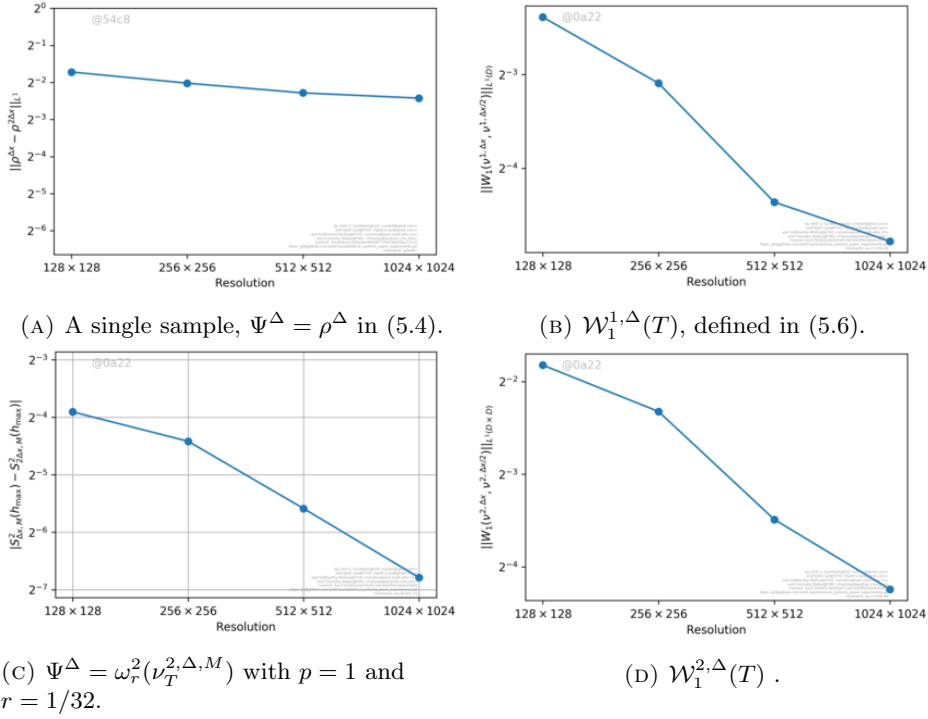

	\begin{subfigure}[b]{0.48\textwidth}
		\InputImage{\textwidth}{0.8\textwidth}{rm_single_l1_convergence_0}
		\caption{A single sample, $\Psi^\Delta =\rho^\Delta$ in~\eqref{eq:crate}.}
	\end{subfigure}
	\begin{subfigure}[b]{0.48\textwidth}
	\InputImage{\textwidth}{0.8\textwidth}{richtmeyermeshkov_wasserstein_convergence_1pt}
	\caption{${\mathcal W}_1^{1,\Delta}(T)$, defined in \eqref{eq:LpWp}.}
\end{subfigure}
	\begin{subfigure}[b]{0.48\textwidth}
		\InputImage{\textwidth}{0.8\textwidth}{richtmeyerconvergence_richtmeyermeshkov_2_006_1}
		\caption{$\Psi^\Delta=\omega^2_{r}(\nu^{2,\Dx,M}_T)$ with $p=1$ and \\$r=1/32$.}
	\end{subfigure}
\begin{subfigure}[b]{0.48\textwidth}
	\InputImage{\textwidth}{0.8\textwidth}{richtmeyermeshkov_wasserstein_convergence_2pt}
	\caption{${\mathcal W}_1^{2,\Delta}(T)$ .\\\vspace{\baselineskip}}
\end{subfigure}
\caption{Cauchy rates \eqref{eq:crate} (vertical axis) versus $\Dx$ (horizontal axis) for the Richtmeyer--Meshkov problem using $\epsilon = 0.06$ and $T=5$.
\label{fig:RMconv}}
\end{figure}
\begin{figure}[h]
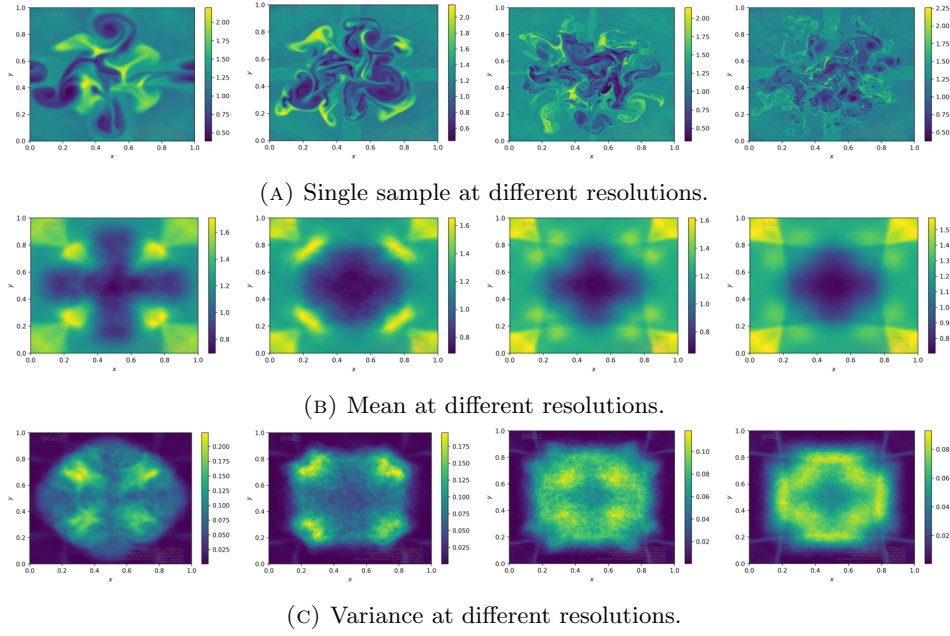

\centering
	\begin{subfigure}[b]{\textwidth}
		\InputImage{0.24\textwidth}{0.2\textwidth}{rm_single_128_0}
		\InputImage{0.24\textwidth}{0.2\textwidth}{rm_single_256_0}
		\InputImage{0.24\textwidth}{0.2\textwidth}{rm_single_512_0}
		\InputImage{0.24\textwidth}{0.2\textwidth}{rm_single_1024_0}
		\caption{Single sample at different resolutions.}
	\end{subfigure}
	\begin{subfigure}[b]{\textwidth}
		\InputImage{0.24\textwidth}{0.8\textwidth}{richtmeyermean_richtmeyermeshkov_128_00600_1}
		\InputImage{0.24\textwidth}{0.8\textwidth}{richtmeyermean_richtmeyermeshkov_256_00600_1}
		\InputImage{0.24\textwidth}{0.8\textwidth}{richtmeyermean_richtmeyermeshkov_512_00600_1}
		\InputImage{0.24\textwidth}{0.8\textwidth}{richtmeyermean_richtmeyermeshkov_1024_00600_1}
		\caption{Mean at different resolutions.}
	\end{subfigure}
	\begin{subfigure}[b]{\textwidth}
		\InputImage{0.24\textwidth}{0.8\textwidth}{richtmeyervariance_richtmeyermeshkov_128_00600_1}
		\InputImage{0.24\textwidth}{0.8\textwidth}{richtmeyervariance_richtmeyermeshkov_256_00600_1}
		\InputImage{0.24\textwidth}{0.8\textwidth}{richtmeyervariance_richtmeyermeshkov_512_00600_1}
		\InputImage{0.24\textwidth}{0.8\textwidth}{richtmeyervariance_richtmeyermeshkov_1024_00600_1}
		\caption{Variance at different resolutions.}
	\end{subfigure}
	\caption{Approximate density for the Richtmeyer--Meshkov problem \eqref{eq:rm} using $\epsilon = 0.06$ and at $T=5$. All results are based on a scheme with the HLLC flux and MC reconstruction, computed at resolutions with (from left to right) $128^2$, $256^2$, $512^2$ and $1024^2$ points.}
	\label{fig:RMvis}
\end{figure}

Next, we compute the time sections of the {structure function} $\omega_{r}^2(\nu^{2,\Dx,M}_t)$ defined in \eqref{eq:sft}. These are shown in Figure \ref{fig:RMsf}, where we have used $T=5$, $r \in \left[1/1024,1/32\right]$, and two different values of the perturbation parameter in \eqref{eq:rm}. As seen from the figure, the structure function clearly converges on mesh refinement. Moreover, it behaves as in \eqref{eq:sfnum}, with exponents shown in Figure \ref{fig:RMsf}. The convergence of the structure function is further verified by plotting the Cauchy rates for the structure function, as a function of the length scale $r$ in Figure \ref{fig:RMconv} (C). 

\begin{figure}[h]
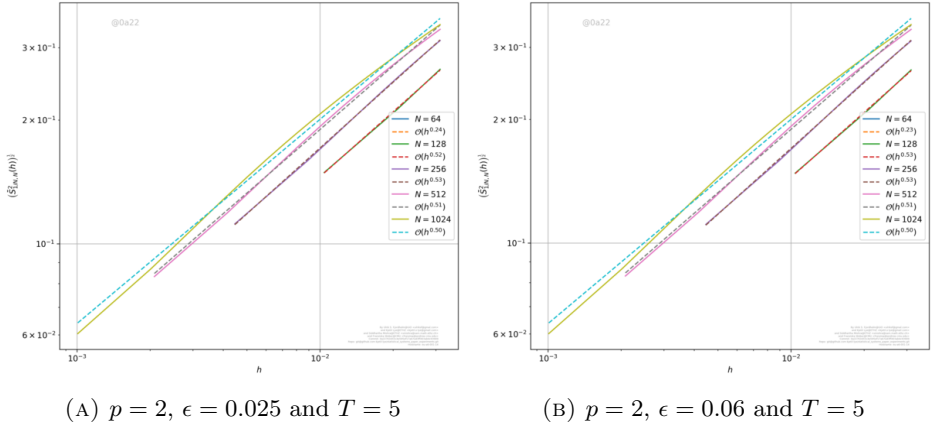

\begin{subfigure}[b]{0.48\textwidth}
	\InputImage{\textwidth}{0.8\textwidth}{richtmeyerscaling_richtmeyermeshkov_2_0025_1}
	\caption{$p=2$, $\epsilon=0.025$ and $T=5$}
\end{subfigure}
\begin{subfigure}[b]{0.48\textwidth}
	\InputImage{\textwidth}{0.8\textwidth}{richtmeyerscaling_richtmeyermeshkov_2_006_1}
		\caption{$p=2$, $\epsilon=0.06$ and $T=5$}
\end{subfigure}

\caption{Structure functions for the Richtmeyer--Meshkov problem \eqref{eq:rm} for different perturbation sizes $\epsilon$.  The scheme used is a HLL3 flux with a MC reconstruction algorithm. At each mesh resolution $N$, $M=N$ samples were used.}
\label{fig:RMsf}
\end{figure}

In Figure \ref{fig:RMconv}(D) we plot the Wasserstein distance ${\mathcal W}_1^{2,\Delta}(T)$ for the density and $T=5$, on a sequence of successively refined meshes. As shown in the figure, this distance converges on mesh (and sample) refinement.

Finally, in Figure \ref{fig:RMhist}, we plot histograms that represent the two-point correlation measure for the density at two different point pairs and at time $T=5$. These histograms show that the two-point correlation structure for this initial datum is very different from the correlation structure for the Kelvin--Helmholtz problem (Figure \ref{fig:KHhist}). 
\begin{figure}[htbp]
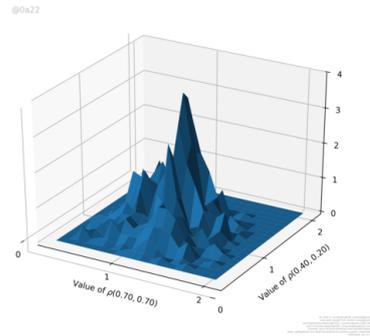
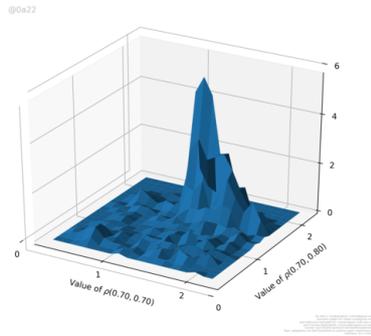
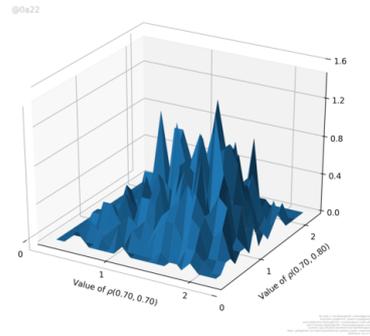
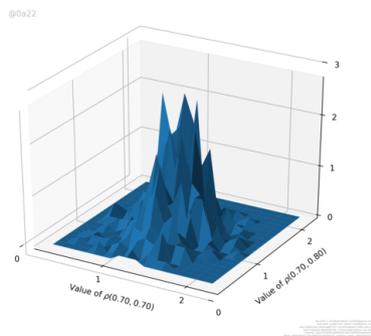
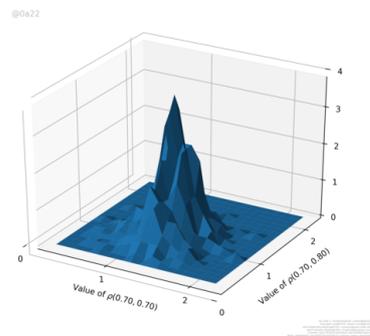

	\begin{subfigure}[b]{0.48\textwidth}
		\InputImage{\textwidth}{0.8\textwidth}{hist2pt_surface_richtmeyermeshkov_128_07_07_04_02}
		\caption{$N=128$}
	\end{subfigure}
	\begin{subfigure}[b]{0.48\textwidth}
		\InputImage{\textwidth}{0.8\textwidth}{hist2pt_surface_richtmeyermeshkov_256_07_07_04_02}
		\caption{$N=256$}
	\end{subfigure}
	\begin{subfigure}[b]{0.48\textwidth}
		\InputImage{\textwidth}{0.8\textwidth}{hist2pt_surface_richtmeyermeshkov_512_07_07_04_02}
		\caption{$N=512$}
	\end{subfigure}
	\begin{subfigure}[b]{0.48\textwidth}
		\InputImage{\textwidth}{0.8\textwidth}{hist2pt_surface_richtmeyermeshkov_1024_07_07_04_02}
		\caption{$N=1024$}
	\end{subfigure}
	
	\begin{subfigure}[b]{0.48\textwidth}
		\InputImage{\textwidth}{0.8\textwidth}{hist2pt_surface_richtmeyermeshkov_128_07_07_07_08}
		\caption{$N=128$}
	\end{subfigure}
	\begin{subfigure}[b]{0.48\textwidth}
		\InputImage{\textwidth}{0.8\textwidth}{hist2pt_surface_richtmeyermeshkov_256_07_07_07_08}
		\caption{$N=256$}
	\end{subfigure}
	\begin{subfigure}[b]{0.48\textwidth}
		\InputImage{\textwidth}{0.8\textwidth}{hist2pt_surface_richtmeyermeshkov_512_07_07_07_08}
		\caption{$N=512$}
	\end{subfigure}
	\begin{subfigure}[b]{0.48\textwidth}
		\InputImage{\textwidth}{0.8\textwidth}{hist2pt_surface_richtmeyermeshkov_1024_07_07_07_08}
		\caption{$N=1024$}
	\end{subfigure}
	\caption{Two-dimensional histograms for the correlation measure at $((0.7,0.7), (0.4,0.2))$ (top two rows) and $((0.7,0.7), (0.4,0.2))$ (bottom two rows) for different resolutions for the density in the Richtmeyer--Meshkov problem \eqref{eq:rm}. The scheme used is an HLL3 flux with an MC reconstruction algorithm. Here, $T=5$ and $\epsilon=0.06$, and we use $M=1024$ samples.}
	\label{fig:RMhist}	
\end{figure}

\subsection{Fractional Brownian motion}\label{sec:fbm}
The initial probability measure $\bar{\mu} \in \Prob(L^p(D))$ in the previous two numerical experiments was realized as a probability measure on high- but finite-dimensional subsets. We now consider initial probability measures that are concentrated on genuinely infinite-dimensional subsets of $L^p(D)$. 

We will assume that the initial probability measure for the two-dimensional compressible Euler equations \eqref{eq:euler} will correspond to a \emph{fractional Brownian motion}. Introduced by Mandelbrot et al.~\cite{mandelbrot_fractional}, fractional Brownian motion can be seen as a generalization of standard Brownian motion with a scaling exponent different from $1/2$.

We consider the following initial data,
\begin{align*}
w^{x,H}_0(\omega; x):=B_1^H(\omega; x),\qquad w^{y,H}_0(\omega;x) := B_2^H(\omega; x),\\
\rho_0=4,\qquad p_0=2.5,\qquad \omega\in\Omega,\ x\in [0,1]^2,
\end{align*}
where $B^H_1$ and $B^H_2$ are two independent two dimensional fractional Brownian motions with Hurst index $H\in(0,1)$. Standard Brownian motion corresponds to a Hurst index of $H=1/2$. 

To generate fractional Brownian motion, we use the random midpoint displacement method originally introduced by L\'evy~\cite{Levy1965} for Brownian motion, and later adapted for fractional Brownian motion~\cite{Fournier:1982:CRS:358523.358553,Voss1991}. Consider the uniform grid $0=x_\hf<\dots<x_{N+\hf}=1$ with $x_\iphf=i\Dx$ and $\Dx=\frac{1}{N}$, where $N=2^k+1$ is the number of cells for some $k\in\N$. We first fix the corners
\[w^{x, H, \Delta x}_{1,N}(\omega; 0) =w^{x, H, \Delta x}_{1,1}(\omega; 0)=w^{x, H, \Delta x}_{N,N}(\omega; 0)=w^{x, H, \Delta x}_{N,1}(\omega; 0)=0 \qquad \omega\in\Omega, \]
Recursively update the values on the edges as
\begin{align*}
w^{x, H, \Delta x}_{2^{k-l-1}(2j+1),2^{k-l}i}(\omega; 0)&=\frac{1}{2}\left (w^{x, H, \Delta x}_{2^{k-l}(j+1), 2^{k-l}i}(\omega; 0)+w^{x, H, \Delta x}_{2^{k-l}j, 2^{k-l}i}(\omega; 0)\right) \\
&\quad+\sqrt{\frac{1-2^{2H-2}}{2^{2lH}}}X_{2^l+j, 2^{k-l}i}(\omega)
\end{align*}
and correspondingly for $w^{x, H, \Delta x}_{2^{k-l}j, 2^{k-l-1}(2i+1)}(\omega; 0)$. For the values in the center of the cells, we use the following expression:
\begin{align*}w^{x, H, \Delta x}_{2^{k-l-1}(2j+1),2^{k-l-1}(2j+1)}(\omega; 0)=\frac{1}{4}\left (w^{x, H, \Delta x}_{2^{k-l}(j+1), 2^{k-l}i}(\omega; 0)+w^{x, H, \Delta x}_{2^{k-l}j, 2^{k-l}i}(\omega; 0)\right.\\
\left. w^{x, H, \Delta x}_{2^{k-l}(j+1), 2^{k-l}(i+1)}(\omega; 0)+w^{x, H, \Delta x}_{2^{k-l}(j+1), 2^{k-l}(i+1)}(\omega; 0)\right)\\
+\sqrt{\frac{1-2^{2H-2}}{2^{2lH}}}X_{2^l+j, 2^{k-l}i}(\omega)
\end{align*}
for $l=0,\ldots,k$ and for $i,j=0,\ldots,2^l$. Here $(X_{k,n})_{(k,n)\in\N^2}$ is a collection of normally distributed random variables with mean 0 and variance 1. That is, we bisect every cell and set the middle value to the average of the neighbouring values plus some Gaussian random variable. The same procedure is repeated for $w^{y,H,\Dx}$. See \Cref{fig:brownian_init} for a sample of the initial velocity field with standard Brownian motion, i.e., with $H=0.5$.

The initial probability measure is given by the law of the above random field and the dimension of its support increases with decreasing mesh size. Hence, in the limit $\Dx\to0$ we are approximating a probability measure supported on an infinite dimensional subspace of $L^2(D;U)$

We compute the statistical solutions with Algorithm~\ref{alg:mc}, with the fractional Brownian motion initial data for two different Hurst indices, $H=0.1$ and $H=0.5$. Statistical observables corresponding to the one-point correlation marginal, such as the mean and variance, converge on mesh refinement, as shown in Figure \ref{fig:BMconv1}. We also plot the mean and variance of the density at the highest mesh resolution of $1024^2$ and time $T=0.25$, for the two different Hurst indices, in Figure \ref{fig:BMmandv}. As seen from the figure, there is a clear difference in the spatial structure of the mean and the variance as the Hurst index is changed. Moreover, the spatial structure of these statistical quantities is much more complicated than in the case of the Kelvin--Helmholtz and Richtmeyer--Meshkov initial data, with no clear large scale structures such as shocks. On the other hand, the statistical quantities have more small-scale structures. This is more pronounced in the $H=0.1$ case than for standard Brownian motion.

\begin{figure}
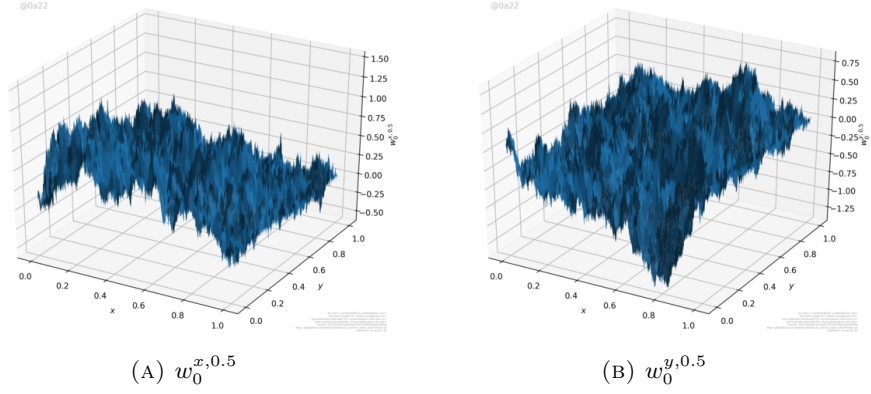

	\begin{subfigure}[b]{0.48\textwidth}
			\InputImage{\textwidth}{0.8\textwidth}{brownian2d_ux}
		\caption{$w^{x,0.5}_0$}
	\end{subfigure}
	\begin{subfigure}[b]{0.48\textwidth}
	\InputImage{\textwidth}{0.8\textwidth}{brownian2d_uy}
	\caption{$w^{y,0.5}_0$}
	\end{subfigure}
\caption{Two samples of velocity fields with the Brownian motion initial data.}\label{fig:brownian_init}
\end{figure}

\begin{figure}[h]
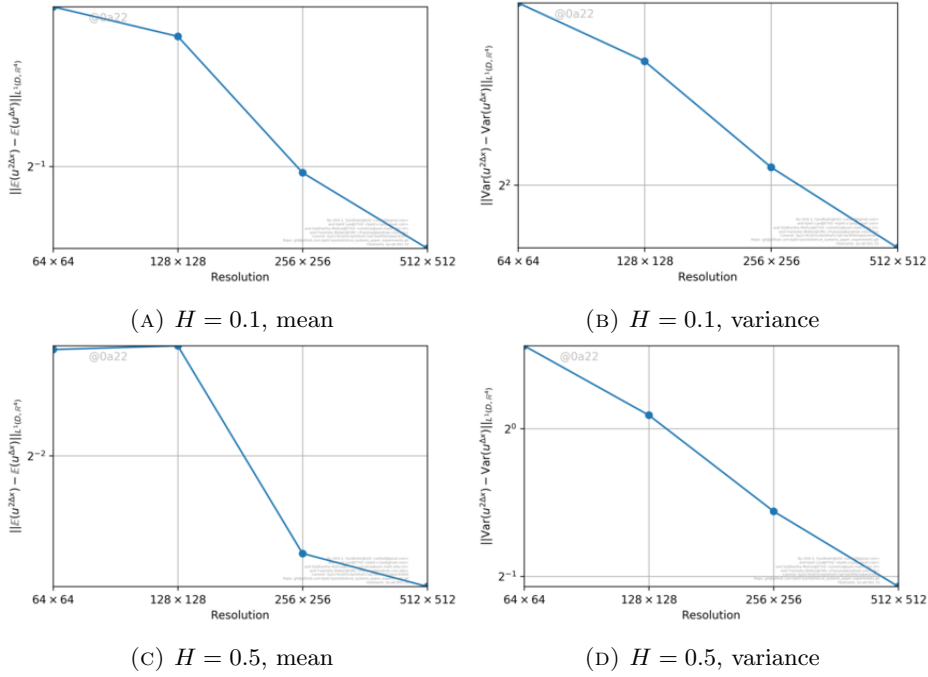

	\begin{subfigure}[b]{0.48\textwidth}
		\InputImage{\textwidth}{0.8\textwidth}{fract010convergence_mean_fractional_brownian_h010_01000_1}
		\caption{$H=0.1$, mean}
	\end{subfigure}
		\begin{subfigure}[b]{0.48\textwidth}
	\InputImage{\textwidth}{0.8\textwidth}{fract010convergence_variance_fractional_brownian_h010_01000_1}
	\caption{$H=0.1$, variance}
\end{subfigure}
	\begin{subfigure}[b]{0.48\textwidth}
		\InputImage{\textwidth}{0.8\textwidth}{brownianconvergence_mean_brownian_01000_1}
		\caption{$H=0.5$, mean}
	\end{subfigure}
	\begin{subfigure}[b]{0.48\textwidth}
		\InputImage{\textwidth}{0.8\textwidth}{brownianconvergence_variance_brownian_01000_1}
		\caption{$H=0.5$, variance}
	\end{subfigure}	

\caption{Cauchy rates \eqref{eq:crate} for the mean and the variance (of the density) with respect to fractional Brownian motion initial data at time $T=0.25$. The scheme used is an HLL3 flux with a WENO2 reconstruction algorithm. }
\label{fig:BMconv1}
\end{figure}

\begin{figure}[h]
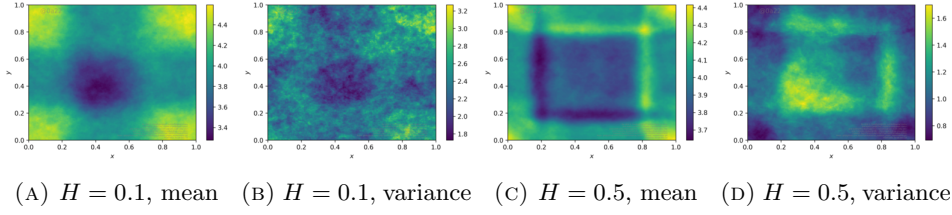

	\begin{subfigure}[b]{0.24\textwidth}
		\InputImage{\textwidth}{0.8\textwidth}{fract010mean_fractional_brownian_h010_1024_01000_1}
		\caption{$H=0.1$, mean}
	\end{subfigure}
		\begin{subfigure}[b]{0.24\textwidth}
	\InputImage{\textwidth}{0.8\textwidth}{fract010variance_fractional_brownian_h010_1024_01000_1}
	\caption{$H=0.1$, variance}
\end{subfigure}
	\begin{subfigure}[b]{0.24\textwidth}
		\InputImage{\textwidth}{0.8\textwidth}{brownianmean_brownian_1024_01000_1}
		\caption{$H=0.5$, mean}
	\end{subfigure}
	\begin{subfigure}[b]{0.24\textwidth}
		\InputImage{\textwidth}{0.8\textwidth}{brownianvariance_brownian_1024_01000_1}
		\caption{$H=0.5$, variance}
	\end{subfigure}	

\caption{The mean and the variance (of the density) at the highest resolution of $1024^2$ mesh points and $1024$ Monte Carlo samples, for fractional Brownian motion initial data with two different Hurst indices and at time $T=0.25$. The scheme used is an HLL3 flux with a WENO2 reconstruction algorithm. }
\label{fig:BMmandv}
\end{figure}

For $r \in \left[1/1024,1/32\right]$, we plot the (time sections of) the structure function $\omega_{r}^{2}(\nu^{2,\Dx,M}_t)$  at $t=0.25$ in Figure \ref{fig:BMsf}. The structure functions clearly converge on mesh refinement. This is also verified for both Hurst indices in Figure \ref{fig:BMconv2} (A,B), where we plot the Cauchy rates \eqref{eq:crate} for the structure function, with respect to the length scale $r$. Moreover, the structure functions scale as in \eqref{eq:sfnum}. 

In Figure \ref{fig:BMconv2} (C,D), we plot the Cauchy rates with respect to the Wasserstein distance ${\mathcal W}^{2,\Dx}_1(0.25)$, with respect to grid resolution, for both Hurst indices. We verify from this figure that these distances also converge on mesh refinement and sample augmentation.

Finally, in Figure \ref{fig:BMhist}, we plot histograms representing the two-point correlation marginals of the density, computed on the finest grid resolution of $1024^2$, for two different point pairs. The figure shows that the two-point correlation structure is again very different for different Hurst indices, and from the correlation structures for the previous numerical experiments. 

\begin{figure}[h]
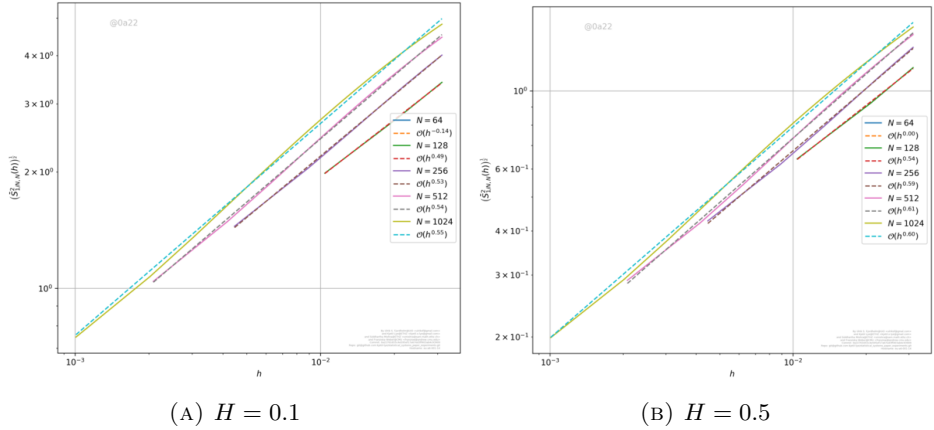

	\begin{subfigure}[b]{0.48\textwidth}
		
		\InputImage{\textwidth}{0.8\textwidth}{fract010scaling_fractional_brownian_h010_2_01_1}
		\caption{$H=0.1$}
	\end{subfigure}
		\begin{subfigure}[b]{0.48\textwidth}
	\InputImage{\textwidth}{0.8\textwidth}{brownianscaling_brownian_2_01_1}
	\caption{$H=0.5$}
\end{subfigure}

\caption{Structure function \eqref{eq:sft} for $p=2$ for different grid resolutions at time $T=0.25$ for two different Hurst indices, corresponding to the fractional Brownian motion initial data. }
\label{fig:BMsf}
\end{figure}

\begin{figure}[h]
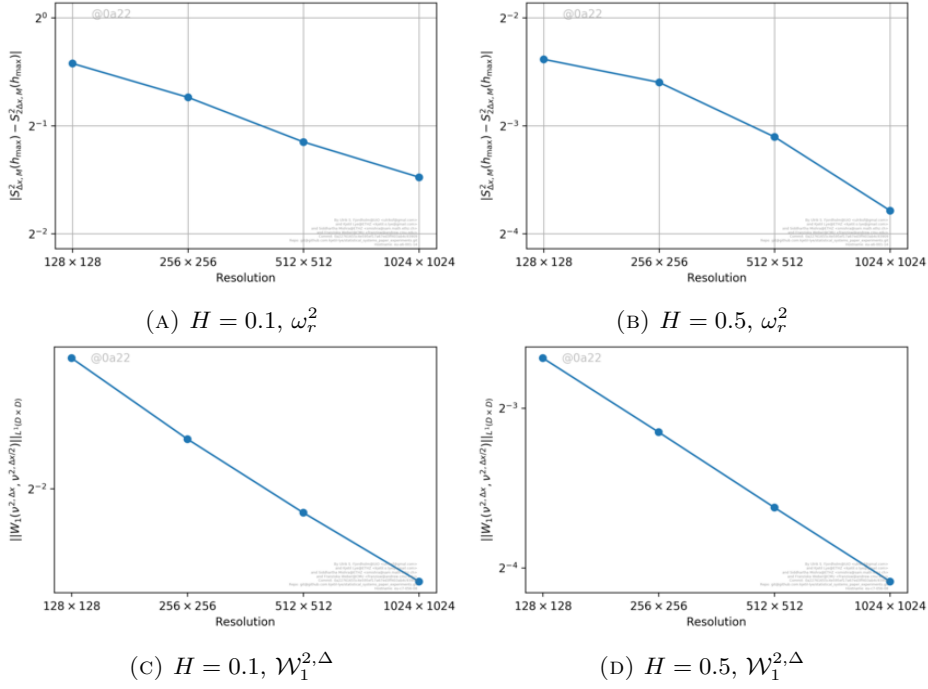

	\begin{subfigure}[b]{0.48\textwidth}
		\InputImage{\textwidth}{0.8\textwidth}{fract010convergence_fractional_brownian_h010_2_01_1}
		\caption{$H=0.1$, $\omega_r^2$}
	\end{subfigure}
		\begin{subfigure}[b]{0.48\textwidth}
	\InputImage{\textwidth}{0.8\textwidth}{brownianconvergence_brownian_2_01_1}
	\caption{$H=0.5$, $\omega_r^2$}
\end{subfigure}
	\begin{subfigure}[b]{0.48\textwidth}
		\InputImage{\textwidth}{0.8\textwidth}{fractionalbrownianmotionh01_wasserstein_convergence_2pt}
		\caption{$H=0.1$, ${\mathcal W}^{2,\Dx}_1$}
	\end{subfigure}
	\begin{subfigure}[b]{0.48\textwidth}
		\InputImage{\textwidth}{0.8\textwidth}{brownianmotion_wasserstein_convergence_2pt}
		\caption{$H=0.5$, ${\mathcal W}^{2,\Dx}_1$ }
	\end{subfigure}	
\caption{Convergence for different two-point statistical observables for the fractional Brownian motion initial data at time $T=0.25$. Top row: (Time sections of) structure function \eqref{eq:sft} with $p=2$. Bottom row: Wasserstein distance \eqref{eq:LpWp} with $p=1$, $k=2$.}
\label{fig:BMconv2}
\end{figure}

\begin{figure}[h]
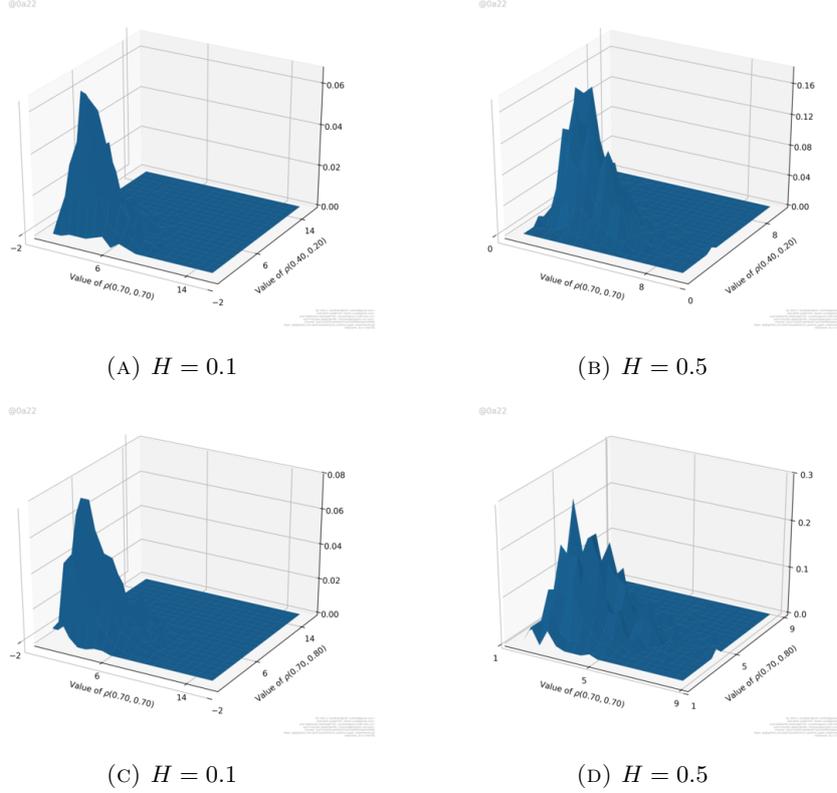

	\begin{subfigure}[b]{0.48\textwidth}
		\InputImage{\textwidth}{0.8\textwidth}{hist2pt_surface_fractionalbrownianmotionh01_1024_07_07_04_02}
		\caption{$H=0.1$}
	\end{subfigure}
		\begin{subfigure}[b]{0.48\textwidth}
	\InputImage{\textwidth}{0.8\textwidth}{hist2pt_surface_brownianmotion_1024_07_07_04_02}
	\caption{$H=0.5$}
\end{subfigure}
	\begin{subfigure}[b]{0.48\textwidth}
		\InputImage{\textwidth}{0.8\textwidth}{hist2pt_surface_fractionalbrownianmotionh01_1024_07_07_07_08}
		\caption{$H=0.1$}
	\end{subfigure}
	\begin{subfigure}[b]{0.48\textwidth}
		\InputImage{\textwidth}{0.8\textwidth}{hist2pt_surface_brownianmotion_1024_07_07_07_08}
		\caption{$H=0.5$}
	\end{subfigure}	

\caption{Histograms representing the two-point correlation marginal for the density at time $T=0.25$ and two different Hurst indices, at two different point pairs. Top row: At points $(0.4,0.2)$ and $(0.7,0.8)$. Bottom row: At points $(0.7,0.7)$ and $(0.7,0.8)$. All figures are generated with mesh resolution of $1024^2$ points and with $1024$ samples.}
\label{fig:BMhist}
\end{figure}

\subsection{Stability of the computed statistical solution}\label{sec:stab}
A priori, the computed statistical solution depends on the specifics of the underlying initial probability measure $\bar{\mu}$ as well as on the details of the numerical scheme \eqref{eq:semi_d}, used within the Monte Carlo Algorithm. We investigate the stability of the computed statistical solution with respect to these parameters in the specific case of the Kelvin--Helmholtz problem \eqref{eq:kh}. 

In \cite{fkmt}, the authors had already demonstrated the stability of the computed measure-valued solution with respect to the variations of the underlying numerical method, or to the size and type of perturbations to the Kelvin--Helmholtz initial data. As the computed measure-valued solution in \cite{fkmt} is identical to the first correlation marginal of our computed statistical solution, we can assume that the observables with respect to the first correlation marginal are also stable. Therefore, we investigate the stability of observables with respect to the second correlation marginal $\nu^{2,\Dx,M}$. The results are summarized below.
\begin{itemize}
    \item \emph{Stability with respect to amplitude of perturbations.} We vary the size of the perturbation parameter $\epsilon$ in \eqref{eq:kh_pert} over two orders of magnitude, from $\epsilon = 0.001$ to $\epsilon = 0.1$. The computed (time section of) structure function \eqref{eq:sft} for $p=2$ and at time $T=2$, on the finest resolution of $1024^2$ points and $1024$ Monte Carlo samples is shown in Figure \ref{fig:KHstab} (A, left). As seen from the figure, the computed structure functions are very close to each other and scale as \eqref{eq:sfnum} with $\theta \approx 0.61$. This indicates stability of the computed structure function with respect to the amplitude of perturbations in the initial data. This stability is further verified in Figure \ref{fig:KHstab} (A, right) where we plot the Wasserstein distance $\bigl\|W_1\left(\nu^{2,\epsilon}_{x,T},\nu^{2,\frac{\epsilon}{2}}_{x,T}\right)\bigr\|_{L^1(D^2)}$ with respect to the density, at time $T=2$ for different values of the perturbation parameter. The plot shows (linear) convergence with the decay of the perturbation, indicating stability of the computed statistical solution \emph{vis a vis} perturbation amplitude. 
    
    \item \emph{Stability with respect to type of perturbations.} In all the numerical experiments for the Kelvin--Helmholtz initial data, we have assumed that the random variables $a_j,b_j$ in \eqref{eq:kh_pert} are chosen from a uniform distribution. Here, we choose these random variables from a standard normal distribution. This amounts to varying the corresponding initial probability measure for \eqref{eq:kh}. The consequent change in the structure function \eqref{eq:sft} for two different amplitudes of the perturbation parameter $\epsilon$ in \eqref{eq:kh_pert} are shown in Figure \ref{fig:KHstab} (B, left). The figure clearly shows that the computed structure functions are very close to the ones computed with the uniform distribution. This stability with respect to the type of perturbation is further verified in Figure \ref{fig:KHstab} (B, right) where we plot the $\big\|W_1\left(\nu^{2,\epsilon}_{x,T},\hat{\nu}^{2,\epsilon}_{x,T}\right)\big\|_{L^1(D^2)}$ at time $T=2$. Here, $\nu, \hat{\nu}$ refer to the correlation measures, computed with the uniform and standard normal random variables, respectively. The plot shows convergence with the decay of the perturbation, indicating stability of the computed statistical solution, \emph{vis a vis} perturbation type.
    \item \emph{Stability with respect to choice of numerical scheme.} In order to investigate the stability of the computed statistical solutions to the choice of the underlying numerical scheme in \eqref{eq:semi_d}, we vary the reconstruction procedure, i.e., we use a high-resolution finite volume scheme based on the HLLC flux, but with either MC or WENO2 reconstructions (see e.g.~\cite{leveque_green}). The choice of the reconstruction leads to change in the sub-grid scale numerical viscosity of the overall approximation. We plot the structure function \eqref{eq:sft} in Figure \ref{fig:KHstab} (C, left) and observe a very minor change in the structure function. This issue is investigated further in Figure \ref{fig:KHstab} (C, right) where the Wasserstein distances $\big\|W_1\left(\nu^{2,\Dx}_{\text{WENO2}, x,T},\nu^{2,\Dx}_{\text{MC}, x,T}\right)\big\|_{L^1(D^2)}$ at time $T=2$ are plotted. Here $\nu^{2,\Dx}_{\text{WENO2}}$ is the second correlation marginal, computed with the WENO2 reconstruction procedure, and $\nu^{2,\Dx}_{\text{MC}}$ is the second correlation marginal, computed with the MC reconstruction procedure. We observe convergence of this distance with respect to resolution. This allows us to conclude that the statistical solutions are stable with respect to the choice of the underlying numerical method, at least for this Kelvin--Helmholtz problem. 
    \end{itemize}
    
    \begin{figure}[h]
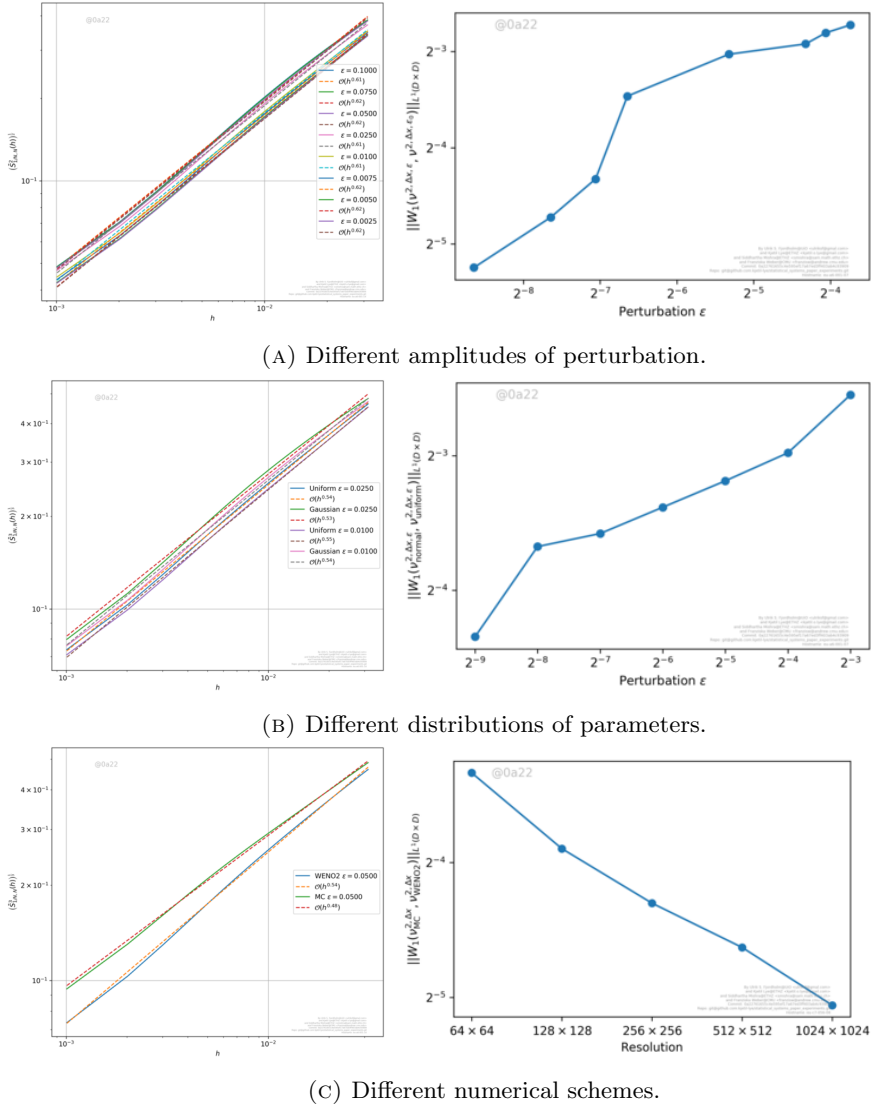

	\begin{subfigure}[b]{\textwidth}
		\InputImage{0.4\textwidth}{\textwidth}{eulerscaling_perturbation_kelvinhelmholtz_2_1024_1}
		\InputImage{0.5\textwidth}{\textwidth}{kelvinhelmholtz_wasserstein_perturbation_convergence_2pt_all}
		\caption{Different amplitudes of perturbation.}
	\end{subfigure}
		\begin{subfigure}[b]{\textwidth}
	\InputImage{0.4\textwidth}{0.8\textwidth}{scaling_compare_perturbation_kelvinhelmholtz_3_1024_1}
	\InputImage{0.5\textwidth}{0.8\textwidth}{kelvinhelmholtzperturbationcomparison_type_comparison_wasserstein_perturbation_convergence_2pt_all}
	\caption{Different distributions of parameters.}
\end{subfigure}
	\begin{subfigure}[b]{\textwidth}
		\InputImage{0.4\textwidth}{0.8\textwidth}{scaling_compare_schemes_kelvinhelmholtz_3_1024_1}
		\InputImage{0.5\textwidth}{0.8\textwidth}{kelvinhelmholtzvaryingnumericalscheme_scheme_wasserstein_convergence_2pt}
		\caption{Different numerical schemes.}
	\end{subfigure}
\caption{Stability of the statistical solution with respect to variations of different parameters in the Kelvin--Helmholtz problem \eqref{eq:kh}. Left column: The structure function \eqref{eq:sft} for $p=2$. Right column: Different Wasserstein distances. All computations are at time $T=2$, computed on a fine grid of $1024^2$ points and with $1024$ Monte Carlo samples.}
\label{fig:KHstab}
\end{figure}

\subsection{Statistical steady state and regularity}\label{sec:reg}
In addition to the four numerical experiments reported in the last section, i.e., Kelvin--Helmholtz, Richtmeyer--Meshkov, and fractional Brownian motion with two different Hurst indices of $H=0.1$ and $H=0.5$, we have performed two further numerical experiments. Both of them consider the two-dimensional compressible Euler equations with the following initial data:
\begin{itemize}
    \item Fractional Brownian motion initial data with Hurst index $H=0.75$.
    \item Shock-vortex interaction initial data, see Section 6.3.2 of \cite{FMT_TeCNO} and references therein.
\end{itemize}
The Monte Carlo Algorithm is used to compute the approximate statistical solution $\mu_t^{\Dx,M}$ for these additional sets of initial data. 

We focus on the (time-sections of) the structure function \eqref{eq:sft} and find that in all six numerical experiments, the structure function behaved as \eqref{eq:sfnum}. The exponent $\theta_p(t)$ as a function of time, for $p=1,2,3$ and for each numerical experiment is shown in Figure \ref{fig:reg1}. We observe the following from this figure.
\begin{itemize}
    \item First, the exponent $\theta_p(t)$ reaches a steady state rather quickly, when compared to the dynamic behaviour of the solution. In other words, Figure \ref{fig:reg1} seems to suggest that statistical equilibrium is reached significantly faster than the (deterministic) steady state for individual realizations. Hence, the system evolves dynamically for each sample, while the whole ensemble has already reached statistical equilibrium. The time scale at which this statistical equilibrium is reached depends on the specifics of the initial data. 
    \item For all the experiments except the shock-vortex interaction, there is a very interesting behavior of the structure function \eqref{eq:sft} with respect to time. In particular, the exponent $\theta_p(t)$ for $p=2,3$ clearly increases with time, indicating that the non-linear evolution \emph{statistically regularizes} the solution in some manner. The exception is for the shock-vortex interaction where this exponent remains constant with time. This can be explained by the fact that the shock-vortex interaction results in a solution whose total variation (TV) norm is bounded. Hence, one can readily verify that $\theta_p(t) = \frac{1}{p}$, which is approximately realized in the computations. On the other hand and, as shown in Figure \ref{fig:BV}, the (average) BV-norm blows up for all the remaining test cases. Hence, in these problems, there is a complex interaction of structures at different length scales that leads to a subtle statistical regularity. 
    \item Last, but not least, we observe that in all the numerical experiments we have considered, the structure functions scale as \eqref{eq:sfnum}. Hence, the approximate scaling assumption \eqref{eq:sfscal} in Theorem \ref{theo:conv} is always observed to be satisfied.  
\end{itemize}

\begin{figure}
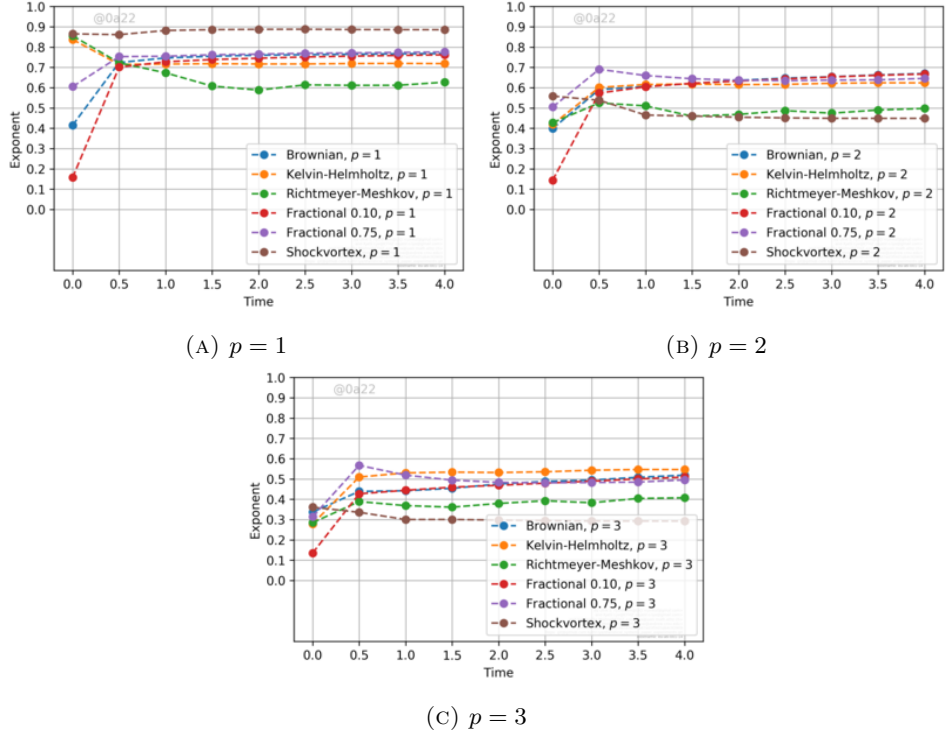

	\begin{subfigure}[b]{0.49\textwidth}
		\InputImage{\textwidth}{0.8\textwidth}{exponent_evolution_1}
		\caption{$p=1$}
	\end{subfigure}
	\begin{subfigure}[b]{0.49\textwidth}
	\InputImage{\textwidth}{0.8\textwidth}{exponent_evolution_2}
	\caption{$p=2$}
\end{subfigure}
	\begin{subfigure}[b]{0.49\textwidth}
	\InputImage{\textwidth}{0.8\textwidth}{exponent_evolution_3}
	\caption{$p=3$}
\end{subfigure}
\caption{The evolution of the approximate scaling exponents of the structure functions as a function of time.}
\label{fig:reg1}

\end{figure}

\begin{figure}
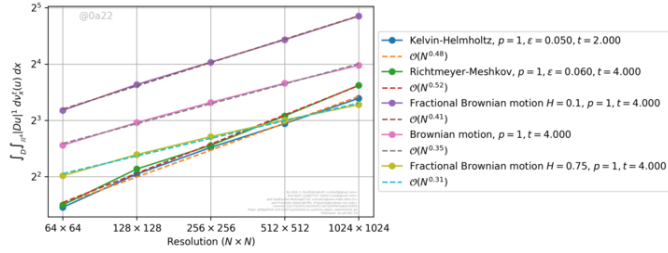

	\InputImage{0.7\textwidth}{0.6\textwidth}{new_bv_res_1_10}	
	\caption{BV norm as a function of resolution for different initial data.}
	\label{fig:BV}
\end{figure}

\subsection{Reproducing the numerical experiments}
All experiments were carried out using the open source Alsvinn simulator~\cite{alsvinn}. For a full description on how the experiments were carried out, along with the raw data and post processing scripts, consult 
\begin{center}
\url{https://github.com/kjetil-lye/systemspaper_experiments}
\end{center} 
and
\begin{center}
\url{https://github.com/kjetil-lye/statistical_systems_paper_experiments}
\end{center}

\section{Discussion}\label{sec:6}
We consider hyperbolic systems of conservation laws \eqref{eq:cauchy}. Although the standard solution framework of entropy solutions has been shown to be well-posed for scalar conservation laws and one-dimensional systems, it is now clearly established that entropy solutions for multi-dimensional systems are not unique, nor are they amenable to numerical approximation. On the other hand, numerical evidence presented in \cite{fkmt,FMTacta} and references therein suggests that a statistical notion of solutions might be more appropriate for \eqref{eq:cauchy}, even if the initial data and other underlying parameters are deterministic. 

Entropy measure-valued solutions \cite{diperna,fkmt} are a possible solution framework. Although global existence and numerical approximation results for entropy measure-valued solutions are available, it is well-known that measure-valued solutions are not unique, even for scalar conservation laws. This is largely on account of lack of information about multi-point spatial correlations. 

Inspired by the need to incorporate correlations, the authors of \cite{FLM17} proposed a framework of statistical solutions for hyperbolic systems of conservation laws. Statistical solutions are time-parameterized probability measures on the space of $p$-integrable functions. They were shown in \cite{FLM17} to be equivalent to adding information about all possible multi-point correlations to the measure-valued solution. The time-evolution of these measures is prescribed by a system of nonlinear tensorized moment-transport equations \eqref{eq:momentcorrmeas}. Under an additional entropy condition, the well-posedness of statistical solutions for scalar conservation laws was shown in \cite{FLM17} and the numerical approximation of statistical solutions for scalar conservation laws was considered in \cite{FLyeM1}.

Our main aim in this paper was to propose a numerical algorithm to approximate statistical solutions of multi-dimensional hyperbolic systems of conservation laws. To this end, we combined a high-resolution finite volume method \eqref{eq:semi_d} with a Monte Carlo sampling procedure to obtain Algorithm \ref{alg:mc}, that computes statistical solutions. 

The task of proving convergence of the approximations $\mu_t^{\Dx,M}$, generated by the Monte Carlo Algorithm, was rather intricate. First, we had to completely characterize an appropriate topology on the space of time-parameterized probability measures on $p$-integrable functions. This topology is based on the topology induced by the underlying correlation measures. We showed that this induced topology is equivalent to the weak topology on the space of probability measures on $L^p$, for any fixed time, but it also induces appropriate extensions when time is varied. The resulting compactness theorem \ref{thm:timedepcmcompactness} delineates the class of admissible observables that converge in this topology. Essentially, this theorem boils down to the convergence of time averages of (multi-point) statistical quantities of interest such as the mean, variance, multi-point correlation functions and structure functions \eqref{eq:sf}. We believe that this topology on time-parameterized probability measures on $L^p$ and novel sufficient conditions for ensuring convergence in it, might have independent applications in probability theory and stochastic analysis.

Next, we proved in Theorem \ref{theo:conv} that, under certain assumptions on the underlying finite volume schemes, the approximate statistical solutions converge, upon mesh refinement, in the aforementioned topology to a time-parameterized probability measure on $L^p$. A Lax--Wendroff theorem was proved, showing that the limit measure is indeed a statistical solution. Finally, a standard Monte Carlo convergence argument was used to guarantee convergence, under sample augmentation, of the approximations generated by the Monte Carlo Algorithm.

The assumptions in Theorem \ref{theo:conv} include $L^p$ stability and the weak BV bound \eqref{eq:wbv}, which are satisfied by many existing high-resolution entropy stable finite volume schemes, such as the TeCNO schemes of \cite{FMT_TeCNO}. On the other hand, we also required a subtle (approximate) scaling assumption \eqref{eq:sfscal} that is an analogue of the well-known scaling assumptions in Kolmogorov's theories for homogeneous, isotropic turbulence \cite{fris1}. Although this assumption was verified in all the numerical experiments, we were unable to prove it here. Hence, we have provided a conditional existence result for statistical solutions of multi-dimensional systems of conservation laws in this paper.  

Moreover, we proposed an entropy condition and a notion of dissipative statistical solutions (Definition \ref{def:dss}) and proved a weak-strong uniqueness result for these dissipative statistical solutions. This provides us with a conditional uniqueness result for statistical solutions, i.e., if they exist, then strong statistical solutions (Definition \ref{def:strongss}) are unique. In particular, we obtain short time existence and uniqueness results for dissipative statistical solutions. 

We present extensive numerical experiments for the two-dimensional compressible Euler equations to illustrate our Monte Carlo Algorithm. The results validate the convergence analysis and demonstrate convergence, on mesh refinement and sample augmentation, for statistical quantities of interest such as the mean, variance, structure functions, one-point probability density functions and two-point joint probability density functions. Moreover, we observe convergence in appropriate Wasserstein distances \eqref{eq:LpWp} for the multi-point correlation measures. Summarizing the results of the numerical experiments, we conclude that one observes \emph{convergence of all interesting statistical observables} in our framework. This should be contrasted to the state of the art, where deterministic quantities do not converge on mesh refinement \cite{fkmt}. Thus, we provide rigorous justification of the computability of statistical quantities of interest in the context of multi-dimensional systems of conservation laws.

Furthermore, we discover from the numerical experiments that 
\begin{itemize}
    \item The computed statistical solutions are remarkably stable with respect to different variations. In particular, we varied the amplitude of initial perturbations, the type of initial perturbations leading to different probability measures on $L^p$, and also the underlying numerical method. In all these cases, we observed that the computed statistical solutions were stable with respect to these perturbations. This observed stability augurs very well for identifying further constraints or admissibility criteria in order to obtain uniqueness of statistical solutions.
    \item The correlation structure of the statistical solutions seem to reach an equilibrium at significantly shorter time scales than the actual flow. This behavior is clearly seen in the variation of structure functions over time (Figure \ref{fig:reg1}). Thus, statistical stationary might be reached much faster than the actual evolution would suggest. 
    \item There seems to be a subtle gain in regularity, as measured by the decay exponent of the structure function, for the statistical solutions (Figure \ref{fig:reg1}). Qualitatively, it seems that mixing of the underlying structures leads to a gain in regularity. This observed regularity needs to be studied further.
    
\end{itemize}
The results in this paper can be extended in different directions. On the theoretical side, a key question is whether the scaling assumption \eqref{eq:sfscal} can be proved for some numerical approximations or relaxed in an appropriate manner. This will pave the way for a unconditional global existence result for statistical solutions. 

As formulated, (dissipative) statistical solutions are not necessarily unique. If we start with a deterministic initial data, i.e.~setting $\bar{\mu} = \delta_{\bar{u}}$, then we can apply the construction of \cite{CDL1,CDK1} to obtain infinitely many entropy solutions corresponding to the initial data. Each deterministic solution defines a statistical solution. Thus, infinitely many statistical solutions are possible for the same (deterministic) initial data. On the other hand, numerical experiments strongly suggest that the computed statistical solutions are stable. Thus, we need to find further admissibility criteria to single out an unique statistical solution. Moreover, the observed gain in regularity might provide additional constraints to obtain uniqueness.

On the computational side, the Monte Carlo algorithm \ref{alg:mc} can be very expensive, even prohibitively expensive in three space dimensions. Hence, it is imperative to consider alternatives to accelerate it. Alternatives such as the multi-level Monte Carlo method and quasi-Monte Carlo method are considered in a forthcoming publication. Another alternative would be to use deep learning, such as in \cite{LMR1}, to accelerate the Monte Carlo algorithm. 

\section*{Acknowledgements.}
The research of SM was partially supported by the ERC consolidator grant ERC COG NN. 770880 COMANFLO. Numerical results in this paper were obtained from computations performed on the Swiss National supercomputing center (CSCS) Lugano, through production projects s665 and s839. FW was partially supported by NFR project no.~250302.

\appendix

\section{Proof of \Cref{thm:weakconvergenceequivalence}}\label{app:weakconvergenceequivalenceproof}
In order to characterize weak convergence of probability measures on $L^p(D;U)$, we will use a construction from \cite[Section 5.1, pp.~106--107]{ambrosio_gradient_flows} which we summarize here. Assume that $X$ is Polish (i.e.,~a complete and separable metric space) and let $X_0\subset X$ be a dense, countable subspace. For fixed $q_1,q_2,q_3\in\Q$ with $q_2,q_3\in(0,1)$ and every $v\in X_0$, define $H:X\to\R$ by
\begin{equation}\label{eq:defhatfunction}
H(u) := \min\big(q_1+q_2 d(u,v),\ q_3\big).
\end{equation}
The collection $\widehat{\Lambda}_0$ of all such ``hat functions'' $H$ is clearly countable. Let 
\begin{equation}\label{eq:defc0}
\Lambda_0 := \big\{ q\min(H_1,\dots,H_m)\ : \ q\in\Q,\ H_1,\dots,H_m\in\widehat{\Lambda}_0 \text{ for any } m\in\N\big\}.
\end{equation}
Then $\Lambda_0$ is also countable, and it can be shown that weak convergence of a sequence $(\mu_n)_n$ is equivalent to
\begin{equation}\label{eq:c0convergence}
\Ypair{\mu_n}{F} \to \Ypair{\mu}{F} \qquad \forall\ F\in \Lambda_0
\end{equation}
(see the aforementioned reference).

We prove first Lemma \ref{lem:lgcont}.
\begin{proof}[Proof of Lemma \ref{lem:lgcont}]
If $g\in\Caratheodory^{k,p}(D;U)$ and $u\in L^p(D;U)$ then
\begin{align*}
|L_g(u)| &\leq \int_{D^k} |g(x,u(x_1),\dots,u(x_k))|\,dx \\
&\leq \sum_{\alpha\in\{0,1\}^k} \int_{D^k} \phi_{|\hat{\alpha}|}(x_{\hat{\alpha}})|u(x_1)|^{\alpha_1p}\cdots |u(x_k)|^{\alpha_kp}\,dx \\
&= \sum_{\alpha\in\{0,1\}^k} \|\phi_{|\hat{\alpha}|}\|_{L^1(D^{|\hat{\alpha}|})}\|u\|_{L^p(D)}^{|\alpha|p} \\
&= \sum_{i=0}^k \binom{k}{i}\|\phi_i\|_{L^1(D^i)}\|u\|_{L^p(D)}^{p(k-i)} < \infty.
\end{align*}
If $g\in\Caratheodory^{k,p}_1$ and $u,v\in L^p(D;U)$ then ($\hat{u}^j(x):=(u(x_1),\dots, u(x_{j-1}),u(x_{j+1}),\dots,u(x_k))$)
\begin{align*}
|L_g(u)-L_g(v)| &\leq C\sum_{i=1}^k \int_{D^k} |u(x_i)-v(x_i)|\max\big(|u(x_i)|,|v(x_i)|\big)^{p-1}\prod_{j\neq i} h(\hat{x}^j,\hat{u}^j({x})\big) dx\\
&\leq Ck\|u-v\|_{L^p}\big\|\max(|u|,|v|)\big\|_{L^p}^{(p-1)/p}\big(\|\phi_1\|_{L^1}+\|u\|_{L^p}^p\big)^{k-1},
\end{align*}
whence $L_g$ is continuous (and in fact locally Lipschitz).
\end{proof}

\begin{lemma}\label{lem:polynomialcp}
Let $F\in\big(\Lgfuncs^p\big)^k$ for some $k\in\N$, i.e.\ $F(u) = (F_1(u),\dots,F_k(u))$ for $F_1,\dots,F_k$ lying either in $\Lgfuncs^p$ or in $\Lgfuncs^p_1$. Let $P : \R^k \to \R$ be a polynomial in $k$ variables. Then $P\circ F$ lies in $\Lgfuncs^p$ or in $\Lgfuncs^p_1$, respectively.
\end{lemma}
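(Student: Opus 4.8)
The plan is to reduce the statement to two elementary closure properties of the classes $\Caratheodory^{k,p}$ and $\Caratheodory^{k,p}_1$: stability under tensor products and under finite sums. Write $P(t_1,\dots,t_k)=c_0+\sum_{\mu}c_\mu\, t_1^{\mu_1}\cdots t_k^{\mu_k}$, a finite sum over multi-indices $\mu=(\mu_1,\dots,\mu_k)$ with $\mu_j\in\N\cup\{0\}$, and let $F_j=L_{g_j}$ with $g_j\in\Caratheodory^{k_j,p}$ (respectively $\Caratheodory^{k_j,p}_1$). Since $(P\circ F)(u)=c_0+\sum_\mu c_\mu\prod_{j=1}^k L_{g_j}(u)^{\mu_j}$, it suffices to show that (a) each monomial $\prod_j L_{g_j}^{\mu_j}$ is of the form $L_G$ for an admissible $G$, and (b) a finite sum of functionals of this form, plus a constant, is again of this form.

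For (a) I would use a tensor-product construction: given $g\in\Caratheodory^{m,p}$ and $g'\in\Caratheodory^{m',p}$, define $g\boxtimes g':D^{m+m'}\times U^{m+m'}\to\R$ by $(g\boxtimes g')(x,\xi):=g(x_1,\dots,x_m,\xi_1,\dots,\xi_m)\,g'(x_{m+1},\dots,x_{m+m'},\xi_{m+1},\dots,\xi_{m+m'})$; Fubini's theorem then gives $L_{g\boxtimes g'}=L_g\cdot L_{g'}$ on $L^p(D;U)$, both sides being finite by \Cref{lem:lgcont}. Iterating, $\prod_j L_{g_j}^{\mu_j}=L_{G_\mu}$, where $G_\mu$ is the tensor product of $\mu_j$ copies of $g_j$ over $j=1,\dots,k$, a function of arity $K_\mu=\sum_j\mu_jk_j$. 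For (b) I would pad each $G_\mu$ up to the common arity $K:=\max_\mu K_\mu$ by tensoring it with $K-K_\mu$ copies of a fixed nonnegative $\psi\in L^1(D)$ normalised by $\int_D\psi=1$; this leaves the value of $L_{G_\mu}$ unchanged, and the constant $c_0$ is represented by a suitable arity-$K$ element (e.g. $c_0\psi^{\boxtimes K}$, using the constant density when $D$ has finite measure). Setting $G$ to be the corresponding linear combination of these arity-$K$ functions then yields $P\circ F=L_G$.

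It remains to check that all the operations above preserve the defining bounds, and this is the only step requiring genuine care — although it is bookkeeping rather than anything conceptual. For the growth bound \eqref{eq:gbounded}: multiplying the majorants of $g$ and $g'$ and reindexing by the concatenated multi-index $\gamma=(\alpha,\beta)\in\{0,1\}^{m+m'}$ (so that $\bar\gamma=(\bar\alpha,\bar\beta)$) produces a bound of exactly the form \eqref{eq:gbounded} for $g\boxtimes g'$, the new coefficient functions being finite sums of tensor products $\phi_i\otimes\phi'_j\in L^1(D^{i+j})$ (by Tonelli); finite sums and $\psi$-padding preserve the form as well. For the Lipschitz bound \eqref{eq:glipschitz} of the subclass $\Caratheodory^{k,p}_1$, I would expand $g\boxtimes g'(x,\zeta)-g\boxtimes g'(y,\xi)$ via $ab-a'b'=(a-a')b+a'(b-b')$, bound the $(a-a')$ factor by the local Lipschitz estimate for $g$ and its companion factor by the growth estimate for $g'$ (and symmetrically for the other summand), and note that the result is again of the form $\sum_{i=1}^{m+m'}|\zeta_i-\xi_i|\max(|\xi_i|,|\zeta_i|)^{p-1}h(\hat x^i,\hat\xi^i)$ with $h$ a tensor product of an $h$-majorant and a growth majorant — hence in $\Caratheodory^{m+m'-1,p}$ by the growth-bound computation just carried out. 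The same argument handles padding and summation. The main obstacle, then, is simply keeping the multi-index and product structure of the majorants straight; beyond \Cref{lem:lgcont} and the elementary identity $|ab-a'b'|\le|a-a'|\,|b|+|a'|\,|b-b'|$, no new estimates are needed.
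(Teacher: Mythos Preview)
Your proposal is correct and follows essentially the same approach as the paper: reduce to closure under multiplication (via the tensor product $g\boxtimes g'$) and under addition (via padding with a normalized $\phi\in L^1$), exactly as the paper does. You spell out the verification of the growth and Lipschitz bounds in more detail than the paper, which simply asserts they are ``readily seen''; your use of $ab-a'b'=(a-a')b+a'(b-b')$ is precisely what is needed there.
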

\begin{proof}
$\Lgfuncs^p$ is clearly closed under scalar multiplication, as $\alpha L_g = L_{\alpha g}$ for any $\alpha\in\R$. It is therefore enough to show that $\Lgfuncs^p$ ($\Lgfuncs^p_1$, resp.) is closed under addition and multiplication. Let $g_i:D^{k_i}\times U^{k_i}\to\R$ for $i=1,2$ be Carath\'eodory functions satisfying \eqref{eq:gbounded}, \eqref{eq:glipschitz}. Then
\begin{align*}
L_{g_1}(u)L_{g_2}(u) &= \int_{D^{k_1}} g_1(x,u(x_1),\dots,u(x_{k_1}))\,dx\int_{D^{k_2}} g_2(x,u(x_1),\dots,u(x_{k_2}))\,dx \\
&= \int_{D^{k}} g(x,u(x_1),\dots,u(x_k))\,dx = L_g(u),
\end{align*}
where $k=k_1+k_2$ and 
\[
g(x,\xi) = g_1(x_1,\dots,x_{k_1},\xi_1,\dots,\xi_{k_1})g_2(x_{k_1+1},\dots,x_k,\xi_{k_1+1},\dots,\xi_k).
\]
The function $g$ is readily seen to satisfy \eqref{eq:gbounded}, and using \eqref{eq:gbounded} it can be seen that $g$ also satisfies \eqref{eq:glipschitz} whenever $g_1,g_2$ do so. This shows that $\Lgfuncs^p$ ($\Lgfuncs^p_1$, resp.) is closed under multiplication. To show that $L_{g_1}+L_{g_2}\in \Lgfuncs^p$ (or $\Lgfuncs^p_1$, respectively), assume that, say, $k_1\leq k_2$. If $k_1=k_2$ then clearly $L_{g_1}+L_{g_2}\in\Lgfuncs^p$ (or $\Lgfuncs^p_1$, respectively), so assume that $k_1<k_2$. Let $k_0=k_2-k_1$ and let $\phi\in L^1(D^{k_0})$ satisfy $\phi\geq0$ and $\int_{D^{k_0}}\phi(x)\,dx = 1$. Define $g:D^{k_2}\times U^{k_2}\to\R$ by
\[
g(x,\xi) = g_1(x_1,\dots,x_{k_1},\xi_1,\dots,\xi_{k_1})\phi(x_{k_1+1},\dots,x_{k_2}) + g_2(x,\xi).
\]
It is now straightforward to verify that $L_{g_1}+L_{g_2}=L_g$ and that $g$ satisfies \eqref{eq:gbounded}, \eqref{eq:glipschitz}.
\end{proof}

\begin{proof}[Proof of \Cref{thm:weakconvergenceequivalence}]
We may assume that $B$ is closed. By Lemma \ref{lem:lgcont}, every $L_g\in\Lgfuncs^p_1$ is continuous and bounded on $B$. For any $L_g\in \Lgfuncs^p_1$ there is, by the Tietze extension theorem \cite[Section 4.2]{Folland}, an $F\in C_b(L^p(D;U))$ satisfying $F=L_g$ on $B$, so we see that if $\mu_n\weakto\mu$ then $\Ypair{\mu_n}{L_g}=\Ypair{\mu_n}{F}\to\Ypair{\mu}{F}=\Ypair{\mu}{L_g}$.

Assume conversely that $\Ypair{\mu_n}{F}\to\Ypair{\mu}{F}$ for all $F\in \Lgfuncs^p_1$. We claim that the ``hat functions'' $H:L^p(D;U)\to\R$ defined in \eqref{eq:defhatfunction} can be approximated uniformly on $B$ by functions in $\Lgfuncs^p_1$. Let $v\in L^p(D;U)$ and $q_1,q_2,q_3\in\Q$ be as in \eqref{eq:defhatfunction}, and let $R := \sup_{u\in B} \|u-v\|_{L^p}^p < \infty$. Let $\psi(s) = \min\big(q_1+q_2 |s|^{1/p},\ q_3\big)$, which is a continuous, bounded function on $\R$. Let $\eps>0$ and let $P:\R\to\R$ be a polynomial such that $\|\psi-P\|_{C([-R,R])}<\eps$. The function $u\mapsto \|u-v\|_{L^p}^p$ clearly lies in $\Lgfuncs^p_1$, so Lemma \ref{lem:polynomialcp} implies that also $u\mapsto P\big(\|u-v\|_{L^p}^p\big)$ can be written as a function $L_g\in\Lgfuncs^p_1$. For any $u\in B$ we then have
\[
|H(u)-L_g(u)| = \big|\psi\big(\|u-v\|_{L^p}^p\big) - P\big(\|u-v\|_{L^p}^p\big)\big| < \eps.
\]
This proves the claim.

Next, let $F\in \Lambda_0$, where $\Lambda_0$ is given by \eqref{eq:defc0}. Let $\eps>0$, let $g_1,\dots,g_m$ be such that $\|H_i-L_{g_i}\|_{C_b(B)} < \eps$ for $i=1,\dots,m$, and let $P:\R^m\to\R$ be a polynomial such that 
\[
\sup_{r\in[-\eps,1+\eps]^m}\big|\min(r_1,\dots,r_m)-P(r)\big|<\eps.
\]
By Lemma \ref{lem:polynomialcp} we can write $P\circ \big(L_{g_1},\dots,L_{g_m}\big) = L_g$ for some $L_g\in\Lgfuncs^p_1$. We conclude that
\begin{align*}
\sup_{u\in B}\big|F(u) - L_g(u)\big| &= \sup_{u\in B}\big|\min(H_1(u),\dots,H_m(u)) - P(L_{g_1}(u),\dots,L_{g_m}(u))\big| \\ &\leq \sup_{u\in B}\big|\min(H_1(u),\dots,H_m(u)) - \min(L_{g_1}(u),\dots,L_{g_m}(u))\big| \\
&\quad + \sup_{u\in B}\big|\min(L_{g_1}(u),\dots,L_{g_m}(u)) - P(L_{g_1}(u),\dots,L_{g_m}(u))\big| \\
&\leq 2\eps,
\end{align*}
by the 1-Lipschitz continuity of the min-function and the approximation properties of $g_1,\dots,g_m$ and $P$. We can conclude that \eqref{eq:c0convergence} holds, and hence $\mu_n\rightharpoonup\mu$ weakly.
\end{proof}

\section{The compactness theorem}\label{app:proofcompactness}
Throughout this appendix we will use the cutoff functions
\[
\theta(s) := \begin{cases}
1 & |s|\leq 1 \\
2-|s| & 1<|s|<2 \\
0 & |s|\geq2,
\end{cases}
\qquad \theta_R(v):=\theta(|v|/R),
\qquad \zeta_R(v) := v\theta_R(v)
\]
(for some $R\geq1$), defined for $s\in\R$ and $v\in U$, where $U\subset \R^N$ as before. Note that $\zeta_R(v)=v$ for $|v|<R$ and $\zeta_R(v)=0$ for $|v|>2R$, and that $\|\zeta_R\|_{\Lip} \leq 2$.

For a function $u\in L^p(D)$ we define the modulus of continuity
\begin{align*}
\omega_r^p(u) &:= \int_D\intavg_{B_r(0)}|u(x+z)-u(x)|^p\,dzdx.
\end{align*}

\begin{lemma}\label{lem:cutoff}
If $u\in L^q(D,U)$ for some $q\in[1,\infty)$ then
\[
\big\|\zeta_R\circ u-u\big\|_{L^q(D)} \leq 3\omega_r^q(u)^{1/q}
\]
whenever $R \geq \frac{1}{|B_r|^{1/q}}\|u\|_{L^q(D)}$.
\end{lemma}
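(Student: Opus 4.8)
The plan is to control the set where $|u(x)| \geq R$ via a Chebyshev-type argument and then estimate $\zeta_R \circ u - u$ by splitting $D$ according to the size of $|u(x)|$ relative to $R$ and $2R$.

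First I would observe that $\zeta_R(v) - v = 0$ whenever $|v| < R$, and $|\zeta_R(v) - v| = |v|\,|\theta_R(v) - 1| \leq |v|$ always (since $0 \leq \theta_R \leq 1$); moreover $\zeta_R(v) - v = -v$ when $|v| > 2R$. Hence pointwise $|\zeta_R(u(x)) - u(x)| \leq |u(x)|\,\Indicator{|u(x)|\geq R}$, so
\[
\big\|\zeta_R\circ u - u\big\|_{L^q(D)}^q \leq \int_{\{|u|\geq R\}} |u(x)|^q\,dx.
\]
The crux is then to bound this tail integral by the modulus of continuity $\omega_r^q(u)$. The key idea here is that on a set where $u$ is uniformly large, the averaged oscillation $\intavg_{B_r(0)}|u(x+z)-u(x)|^q\,dz$ cannot be small unless $u$ is also large \emph{nearby}; but the hypothesis $R \geq |B_r|^{-1/q}\|u\|_{L^q(D)}$ forces the measure of $\{|u|\geq R\}$ to be smaller than $|B_r|$, which means that for $x$ in this set, a substantial fraction of the ball $B_r(x)$ lies outside $\{|u|\geq R\}$, i.e.\ where $|u|$ is comparatively small — so the oscillation there is bounded below.

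Concretely, I would set $E := \{x \in D : |u(x)| \geq R\}$. By Chebyshev's inequality, $|E| \leq R^{-q}\|u\|_{L^q(D)}^q \leq |B_r|$ by the hypothesis on $R$. Then for each $x \in E$, the set $\{z \in B_r(0) : x + z \notin E\}$ has measure at least $|B_r| - |E| \geq 0$; more usefully, $|\{z \in B_r(0) : |u(x+z)| < R\}| \geq |B_r(0)| - |E|$. On this set, $|u(x+z) - u(x)| \geq |u(x)| - |u(x+z)| \geq |u(x)| - R$ — but this is only useful if $|u(x)|$ is bounded away from $R$. I would therefore split $E = E_1 \cup E_2$ with $E_1 := \{R \leq |u| < 2R\}$ and $E_2 := \{|u| \geq 2R\}$. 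On $E_1$, $|u(x)|^q \leq (2R)^q \leq 2^q$ times... — this approach needs $R$ controlled, so instead I split differently: on $E_2$, for $x+z \notin E$ we get $|u(x+z) - u(x)| \geq |u(x)| - R \geq \frac12|u(x)|$, giving the pointwise bound, after integrating in $z$ over $B_r(0)$ and using $|E| \leq |B_r(0)|$, a constant fraction. On $E_1$ one uses $\int_{E_1}|u|^q \leq 2^q \int_{E_1 \cap \{\cdots\}}$... I would carefully chase the constants to land on the factor $3$.

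The main obstacle I anticipate is getting the constant exactly $3$ rather than some larger absolute constant: the naive splitting gives a bound like $C\,\omega_r^q(u)^{1/q}$ with a $q$-dependent or larger constant, and squeezing it down to $3$ will require the sharp observation that $\intavg_{B_r(0)}|u(x+z) - u(x)|^q\,dz \geq \frac{|B_r| - |E|}{|B_r|}\,\big(|u(x)| - R\big)^q$ for $x \in E$, combined with $|E| \leq |B_r|$ used in the form that the "good" fraction is controlled, and then integrating the inequality $|u(x)|^q \leq \big(\text{something}\big)\cdot\big(|u(x)|-R\big)^q$ valid on $\{|u|\geq 2R\}$ together with a direct $L^q$ estimate on $\{R \leq |u| < 2R\}$. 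I would organize the bound as $\|\zeta_R\circ u - u\|_{L^q}^q \leq \int_E |u|^q\,dx$ and then show $\int_E |u|^q\,dx \leq 3^q\,\omega_r^q(u)$, being careful that the definition of $\omega_r^q$ uses $\intavg_{B_r(0)}$ (a genuine average) so that the geometric factor $(|B_r|-|E|)/|B_r|$ enters correctly and, because $|E|\leq|B_r|$, does not collapse to zero in a way that breaks the argument — if it does collapse, then $E$ is already so large that a cruder estimate suffices. Once the tail integral is bounded, taking $q$-th roots finishes the proof.
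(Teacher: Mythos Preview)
Your approach is substantially different from the paper's and, as you yourself anticipate, leaves a real gap unresolved. The paper's proof avoids the tail estimate entirely by a much simpler device: introduce the local average $u_r(x) := \intavg_{B_r(0)} u(x+z)\,dz$. By H\"older's inequality applied to the average, $|u_r(x)| \leq |B_r|^{-1/q}\|u\|_{L^q(D)} \leq R$ for \emph{every} $x$, so $\zeta_R\circ u_r = u_r$ identically. Then the triangle inequality and the Lipschitz bound $\|\zeta_R\|_{\Lip}\leq 2$ give
\[
\|\zeta_R\circ u - u\|_{L^q} \leq \|\zeta_R\circ u - \zeta_R\circ u_r\|_{L^q} + \|u_r - u\|_{L^q} \leq (2+1)\|u - u_r\|_{L^q} \leq 3\,\omega_r^q(u)^{1/q},
\]
the last step being Jensen's inequality inside the average. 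That is the entire proof, and the constant $3$ falls out immediately.

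Your route---bounding $\int_{\{|u|\geq R\}}|u|^q\,dx$ via a lower bound on the oscillation over $B_r(x)$---hits exactly the difficulty you name: the estimate $\intavg_{B_r(0)}|u(x+z)-u(x)|^q\,dz \geq \frac{|B_r|-|E|}{|B_r|}(|u(x)|-R)^q$ degenerates when $|E|$ approaches $|B_r|$, and your remark that ``a cruder estimate suffices'' in that regime is not substantiated. Nor do you carry the splitting $E_1\cup E_2$ to any explicit constant. The missing ingredient is precisely the comparison with $u_r$: once one observes $(|u(x)|-R)_+ \leq |u(x)|-|u_r(x)| \leq |u(x)-u_r(x)|$ pointwise (using $|u_r|\leq R$ everywhere), the tail is controlled by $\|u-u_r\|_{L^q}^q \leq \omega_r^q(u)$ with no geometric factor at all. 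Combined with your own correct pointwise bound $|\zeta_R(u)-u|\leq 2(|u|-R)_+$, this would in fact yield constant $2$; but without introducing $u_r$, your argument as written is incomplete.
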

\begin{proof}
Denote $u_r(x) := \intavg_{B_r(0)}u(x+z)\,dz$. Then $|u_r(x)| \leq \frac{1}{|B_r|^{1/q}}\|u\|_{L^q(D)}$ for every $x\in D$, so that if $R \geq \frac{1}{|B_r|^{1/q}}\|u\|_{L^q(D)}$ then $u_r = \zeta_R\circ u_r$. It follows that
\begin{align*}
\big\|\zeta_R\circ u-u\big\|_{L^q(D)} &\leq \big\|\zeta_R\circ u-\zeta_R\circ u_r\big\|_{L^q(D)} + \|u_r-u\|_{L^q(D)} \\
&\leq \big(\|\zeta_R\|_{\rm Lip}+1\big)\|u_r-u\|_{L^q(D)} \\
&\leq 3\omega_r^q(u)^{1/q},
\end{align*}
where we have used the fact that $\|u_r-u\|_{L^q} \leq \omega_r^q(u)^{1/q}$.
\end{proof}

\begin{proof}[Proof of Theorem \ref{thm:cmcompactness}]
For every $k\in\N$, the sequence $(\nu^k_n)_{n\in\N}\subset\Caratheodory^{k*}_0(D;U)$ is bounded with norm $\|\nu^k_n\|_{\Caratheodory^{k*}} \equiv 1 < \infty$, and so has a weak*-convergent subsequence. Thus, we can extract a diagonal subsequence $(n_j)_{j\in\N}$ such that $\nu^{k}_{n_j} \weaksto \nu^k \in \Caratheodory^{k*}_0(D;U)$ for every $k\in\N$. For the sake of notational simplicity we denote $n = n_j$ for the remainder of this proof. 

We start by showing \ref{prop:liminf}. Let $\kappa$ and $\phi$ be nonnegative functions, as prescribed. We may assume that $\liminf_{n\to\infty}\Ypair{\nu^k_n}{g}_{\Caratheodory^k} < \infty$, for otherwise there is nothing to prove. For $R>0$, let $\kappa_R\in C_c(U^k)$ and $\phi_R\in L^1(D^k)$ be the functions $\kappa_R(\xi)=\kappa(\xi)\theta_R(\xi)$ and $\phi_R(x)=\phi(x)\theta_R(x)$. Then $\kappa_R\to\kappa$ and $\phi_R\to\phi$ pointwise almost everywhere as $R\to\infty$. If $g_R(x,\xi):=\phi_R(x)\kappa_R(\xi)$ then clearly $g_R\to g$ as $R\to\infty$ almost everywhere, and $g_R\in \Caratheodory^k_0(D;U)$ for every $R$. We then find that
\begin{align*}
\Ypair{\nu^k}{g}_{\Caratheodory^k} &= \lim_{R\to\infty} \Ypair{\nu^k}{g_R}_{\Caratheodory^k} & \text{(Fatou's lemma)}\\
&= \lim_{R\to\infty} \liminf_{n\to\infty} \Ypair{\nu^k_n}{g_R}_{\Caratheodory^k} & \text{(since $\nu^k_n \weaksto \nu^k$)} \\
&\leq \lim_{R\to\infty} \liminf_{n\to\infty} \Ypair{\nu^k_n}{g}_{\Caratheodory^k} & \text{(since $g_R \leq g$ for all $R$)} \\
&= \liminf_{n\to\infty} \Ypair{\nu^k_n}{g}_{\Caratheodory^k},
\end{align*}
which is \eqref{eq:nuklimitbound}.

Since $\nu^k_{n;x} \geq 0$ for all $k,n$ and a.e.\ $x\in D^k$, we have $\nu^k_x\geq0$ for all $k$ and a.e.\ $x\in D^k$. To see that $\nu^k_x(U^k)=1$ for a.e.\ $x\in D^k$, let $A_1,\dots,A_k\subset D$ be bounded Borel sets of positive Lebesgue measure and define $A = A_1\times\dots\times A_k$. Then since $\nu^k_{n;x}$ are probability measures and converge weak* as we have just shown,
\begin{equation*}
1 \geq \intavg_A \Ypair{\nu^k_{n;x}}{\theta_R}\ dx \to \intavg_A \Ypair{\nu^k_{x}}{\theta_R}\ dx \qquad \text{as } n\to\infty.
\end{equation*}
Conversely,
\begin{align*}
\intavg_A 1-\Ypair{\nu^k_{x}}{\theta_R}\ dx 
&= \lim_{n\to\infty} \intavg_A 1-\Ypair{\nu^k_{n;x}}{\theta_R}\ dx
= \lim_{n\to\infty} \intavg_A \Ypair{\nu^k_{n;x}}{1-\theta_R}\ dx \\
&\leq \limsup_{n\to\infty} \frac{1}{R^p}\intavg_A \Ypair{\nu^k_{n;x}}{|\xi|^p}\ dx \\
&\leq \limsup_{n\to\infty} \frac{1}{R^p} \sum_{i=1}^k\intavg_A\Ypair{\nu^k_{n;x}}{|\xi_i|^p}\ dx \\
&\leq \frac{1}{R^p}\sum_{i=1}^k\frac{c^p}{|A_i|},
\end{align*}
where we have first used Chebyshev's inequality and then the uniform $L^p$ bound \eqref{eq:uniformlpbound}. Passing to $R\to\infty$ in the two estimates above shows that
\begin{equation}\label{eq:numassone}	
\intavg_A \nu^k_{x}(U^k)\ dx = 1.
\end{equation}
Since $A_1,\dots,A_k$ were arbitrary, it follows that $\nu^k_x(U^k) = 1$ for a.e.\ $x\in D^k$.

The limit $\cm$ clearly satisfies the symmetry condition. 
For consistency, we need to show that for any $f\in C_0(U^{k-1})$ and $\phi\in C_c(D^k)$, we have
\begin{equation}\label{eq:symmetry}
\int_{D^k} \phi(x)\Ypair{\nu^k_x}{f(\xi_1,\dots,\xi_{k-1})}\ dx = \int_{D^k} \phi(x)\Ypair{\nu^{k-1}_{x_1,\dots,x_{k-1}}}{f(\xi_1,\dots,\xi_{k-1})}\ dx
\end{equation}
Define $\bar{f}_R\in C_0(U^k)$ by $\bar{f}_R(\xi_1,\dots,\xi_k) = f(\xi_1,\dots,\xi_{k-1})\theta_R(\xi_k).$ We estimate the difference between the left- and right-hand sides of \eqref{eq:symmetry} by $E_1+E_2+E_3+E_4$, where
\begin{align*}
E_1 &= \left|\int_{D^k}\phi(x)\Ypair{\nu^k_x}{f(\xi_1,\dots,\xi_{k-1})-\bar{f}_R(\xi)}\ dx\right| \\
E_2 &= \left|\int_{D^k}\phi(x)\left(\Ypair{\nu^k_x}{\bar{f}_R} - \Ypair{\nu^k_{n;x}}{\bar{f}_R}\right)\ dx\right| \\
E_3 &= \left|\int_{D^k}\phi(x)\left(\Ypair{\nu^k_{n;x}}{\bar{f}_R} - \Ypair{\nu^{k-1}_{n;\hat{x}_k}}{f}\right)\ dx\right| \\
E_4 &= \left|\int_{D^k}\phi(x)\left(\Ypair{\nu^{k-1}_{n;\hat{x}_k}}{f} - \Ypair{\nu^{k-1}_{\hat{x}_k}}{f}\right)\ dx\right|
\end{align*}
where $\hat{x}_k = (x_1,\dots,x_{k-1})$. 
The fact that $E_1\to0$ as $R\to\infty$ follows from the Dominated Convergence Theorem. For every $R>0$ we have $E_2$, $E_4 \to 0$ as $n\to\infty$ by the weak* convergence of $\nu^k_n$ and $\nu^{k-1}_n$, respectively. For $E_3$ we can write
\begin{align*}
E_3 &= \left|\int_{D^k}\phi(x)\Ypair{\nu^k_{n;x}}{f\bigl(\xi_1,\dots,\xi_{k-1}\bigr)\bigl(\theta_R(\xi_k)-1\bigr)}\ dx\right| \\
&\leq \|f\|_{C_0(U^{k-1})}\|\phi\|_{C_0(D^k)}\int_{\supp(\phi)}\Ypair{\nu^1_{n;x_k}}{1-\theta_R(\xi)}\ dx \\
&\leq \|f\|_{C_0(U^{k-1})}\|\phi\|_{C_0(D^k)}\frac{C}{R^p} \to 0
\end{align*}
as $R\to\infty$ (as in the proof of \eqref{eq:numassone}). Thus, for a given $\eps>0$ we can first select $R$ such that $E_1,E_3<\eps$, and then select $n$ such that $E_2,E_4<\eps$. This proves \eqref{eq:symmetry}.

The $L^p$ bound \ref{prop:lpbound} follows from \eqref{eq:nuklimitbound} with $k=1$, $\phi\equiv 1$ and $\kappa(\xi) = |\xi|^p$, since $\Ypair{\nu^1}{|\xi|^p}_{\Caratheodory^1} \leq \liminf_n \Ypair{\nu^1_n}{|\xi|^p}_{\Caratheodory^1} \leq c^p$, by \eqref{eq:uniformlpbound}. To prove the diagonal continuity property \ref{prop:dc} of $\cm$ we use \eqref{eq:nuklimitbound} with $k=2$, $\phi(x,y) = \frac{\Indicator{B_r(x)}(y)}{|B_r(x)|}$ and $\kappa(\xi_1,\xi_2) = |\xi_1-\xi_2|^p$, and note that $\omega_r^p(\nu^2) = \Ypair{\nu^2}{g}_{\Caratheodory^2}$. The diagonal continuity of $\nu^2$ then follows from $\omega_r^p(\nu^2) \leq \liminf_{n\to\infty} \omega_r^p(\nu^2_n) \to 0$ as $r\to0$, by \eqref{eq:uniformdc}.

We prove \ref{prop:strongconv} for $k=1$ and give a sketch of the general case. Let $g\in\Caratheodory^{1,p}_1(D;U)$, that is, $g$ is a Carath\'eodory function such that $|g(x,\xi)|\leq \phi_1(x) + \phi_0|\xi|^p$ for some $\phi_0>0$ and nonnegative $\phi_1\in L^1(D)$, as well as $|g(x,\xi)-g(y,\zeta)| \leq C\max(|\xi|,|\zeta|)^{p-1}|\xi-\zeta|$ for some $C>0$. Define
\begin{align*}
u_n(x) := \Ypair{\nu^1_{n;x}}{g(x,\xi)}, && u_n^R(x) := \Ypair{\nu^1_{n;x}}{\theta_R(|\xi|^p)g(x,\xi)}, \\
u(x) := \Ypair{\nu^1_{x}}{g(x,\xi)}, && u^R(x) := \Ypair{\nu^1_{x}}{\theta_R(|\xi|^p)g(x,\xi)}.
\end{align*}
The sequence $\{u_n\}_{n\in\N}$ satisfies
\begin{equation}\label{eq:udiagcontargument}
\begin{split}
\int_D\intavg_{B_r(0)} &|u_n(x+z)-u_n(x)|\,dzdx \\
&\leq \int_D\intavg_{B_r(0)}\big|\Ypair{\nu^2_{n,x,x+z}}{g(x+z,\xi_2)-g(x,\xi_1)}\big|\,dzdx \\
&\leq C\int_D\intavg_{B_r(0)}\Ypair{\nu^2_{n,x,x+z}}{\max(|\xi_1|,|\xi_2|)^{p-1}|\xi_2-\xi_1|}\,dz dx \\
&\leq C\left(\int_D\intavg_{B_r(0)}\Ypair{\nu^2_{n,x,x+z}}{\max(|\xi_1|,|\xi_2|)^p}\,dz dx\right)^{(p-1)/p} \\
&\qquad \qquad \qquad\quad \left(\int_D\intavg_{B_r(0)}\Ypair{\nu^2_{n,x,x+z}}{|\xi_2-\xi_1|^p}\,dz dx\right)^{1/p} \\
&\leq 2Cc^{p-1} \omega_r^p\big(\nu^2_n\big)^{1/p},
\end{split}
\end{equation}
so together with the compactness of $D$ and the uniform bound $\|u_n\|_{L^1(D)} \leq \|\phi_1\|_{L^1(D)} + \phi_0c^p$ we can apply Kolmogorov's Compactness Theorem \cite[Theorem A.8]{HR},~\cite{Sudakov1957} to conclude that $\{u_n\}_{n\in\N}$ is precompact in $L^1(D)$. Hence, there exists a subsequence $\{u_{n_l}\}_{l\in\N}$ and some $\bar{u}\in L^1(D)$ such that $u_{n_l}\to\bar{u}$ as $l\to\infty$ in $L^1(D)$. From the weak* convergence of $\nu^1_n$ we know that $u_n^R \weakto u^R$ as $n\to\infty$ weakly in $L^1(D)$ for every $R>0$. Lebesgue's dominated convergence theorem implies that $u^R \to u$ as $R\to\infty$ in $L^1(D)$. 

We claim that $u^R_n \to u_n$ as $R\to\infty$ in $L^1(D)$, \textit{uniformly} in $n$. Indeed, choosing $R>0$ such that $R\geq \frac{1}{|B_r|^{1/p}}M$ (where $M$ is the constant in \eqref{eq:LpLinftybound}), we get
\begin{align*}
&\int_D |u_n(x)-u_n^R(x)|\,dx = \int_D \big|\Ypair{\nu^1_{n;x}}{(1-\theta_R(|\xi|^p))g(x,\xi)}\big|\,dx \\
&\leq \int_D \Ypair{\nu^1_{n;x}}{(1-\theta_R(|\xi|^p))\phi_1(x)}\,dx + \phi_0\int_D \Ypair{\nu^1_{n;x}}{(1-\theta_R(|\xi|^p))|\xi|^p}\,dx \\
&= \underbrace{\int_{L^p}\int_D (1-\theta_R(|u(x)|^p))\phi_1(x)\,dxd\mu_n(u)}_{=:F_1}\\
&\hphantom{=}\quad + \phi_0\underbrace{\int_{L^p}\int_D (1-\theta_R(|u(x)|^p))|u(x)|^p\,dxd\mu_n(u)}_{=:F_2}.
\end{align*}
The first term can be bounded by
\begin{align*}
F_1 &\leq \int_{L^p}\int_{\{x\in D\ :\ |u(x)|^p>R\}}\phi_1(x)\,dxd\mu_n(u) \\
&\leq \int_{L^p}\sup\left\{\int_{D'}\phi_1(x)\,dx\ :\ D'\subset D,\ |D'|\leq \|u\|_{L^p}^p/R\right\}d\mu_n(u) \\
&\leq \sup\left\{\int_{D'}\phi_1(x)\,dx\ :\ D'\subset D\ :\ |D'|\leq c^p/R\right\}
\end{align*}
where we in the second inequality used Chebyshev's inequality and in the third inequality the uniform $L^p$ bound \eqref{eq:uniformlpbound}. The above converges to 0 as $R\to\infty$, uniformly in $n$. For the second term we have
\begin{align*}
F_2 &= \int_{L^p}\int_D \big||u(x)|^p - \zeta_R(|u(x)|^p)\big|\,dxd\mu_n(u) = \int_{L^p}\big\||u|^p - \zeta_R\circ|u|^p\big\|_{L^1(D)}\,d\mu_n(u) \\
&\leq 3\int_{L^p} \omega_r^1(|u|^p)\,d\mu_n(u) \leq 6pc^{p-1}\int_{L^p} \omega_r^p(u)^{1/p}\,d\mu_n(u) \\
&\leq 6pc^{p-1}\omega_r^p(\nu^2_n)^{1/p}
\end{align*}
where the first inequality follows from Lemma \ref{lem:cutoff} with $q=1$, the second inequality follows an estimate similar to \eqref{eq:udiagcontargument}, and the third inequality is H\"older's inequality. The final term above vanishes as $R\to\infty$ uniformly in $n$, by the uniform diagonal continuity assumption \eqref{eq:uniformdc}. It follows that for any $\psi\in L^\infty(D)$,
\begin{align*}
\Big|\int_D \psi(u-\bar{u})\,dx\Big| &\leq \|\psi\|_{L^\infty}\Big(\|u-u^R\|_{L^1}+\|u_{n_l}^R-u_{n_l}\|_{L^1} + \|u_{n_l}-\bar{u}\|_{L^1}\Big) \\
&\quad+ \Big|\int_D \psi\big(u^R-u_{n_l}^R\big)\,dx\Big|,
\end{align*}
all of which vanish as $R\to\infty$ and $l\to\infty$. We conclude that $\bar{u}=u$, whence $u_{n_l}\to u$ as $l\to\infty$. By the uniqueness of the limit $u$, we get convergence of the whole sequence: $u_n \to u$ in $L^1(D)$ as $n\to\infty$.

For general $k\in\N$ we prove only that $u_n(x):=\Ypair{\nu^k_{n;x}}{g(x,\xi)}$ satisfies a bound of the form \eqref{eq:udiagcontargument}, and leave the rest to the reader. We write first
\begin{align*}
&\int_{D^k}\intavg_{B_r(0)^k}|u_n(x+z)-u_n(x)|\,dzdx \\
&\leq \int_{D^k}\intavg_{B_r(0)^k}\Ypair{\nu^{2k}_{n;x,x+z}}{\big|g\big(x+z,\xi_{k+1},\dots,\xi_{2k}\big)-g\big(x,\xi_1,\dots,\xi_k\big)\big|}\,dzdx \\
&\leq C\sum_{l=1}^k \int_{D^k}\intavg_{B_r(0)^k}\Ypair{\nu^{2k}_{n;x,x+z}}{|\xi_{l+k}-\xi_l|\max(|\xi_l|,|\xi_{l+k}|)^{p-1}h(\hat{x}^l, \hat{\xi}^l)}\,dzdx
\end{align*}
(cf.~Definition \ref{def:caratheodory} with $\hat{\xi}^l=(\xi_1,\dots,\xi_{l-1},\xi_{l+1},\dots,\xi_k)$). Consider, say, the last summand above:
\begin{align*}
\int_{D^k}&\intavg_{B_r(0)^k}\Ypair{\nu^{2k}_{n;x,x+z}}{|\xi_{2k}-\xi_k|\max(|\xi_k|,|\xi_{2k}|)^{p-1}h(\hat{x}^k, \hat{\xi}^k)}\,dzdx \\
&=\int_{D^k}\intavg_{B_r(0)}\Ypair{\nu^{k+1}_{n;x,x_k+z_k}}{|\xi_{k+1}-\xi_k|\max(|\xi_k|,|\xi_{k+1}|)^{p-1}h(\hat{x}^k, \hat{\xi}^k)}\,dz_kdx \\
&=\int_{L^p(D)}\int_{D^k}\intavg_{B_r(0)}\big|u(x_k+z_k)-u(x_k)\big|\max(|u(x_k)|,|u(x_k+z_k)|)^{p-1}\\
&\qquad\qquad\qquad\qquad\quad h(x_1,\dots,x_{k-1}, u(x_1),\dots,u(x_{k-1}))\,dz_kdxd\mu_n(u) \\
&=\int_{L^p(D)}\left(\int_{D}\intavg_{B_r(0)}|u(x_k+z_k)-u(x_k)|\max(|u(x_k)|,|u(x_k+z_k)|)^{p-1}\,dz_kdx_k\right) \\
&\qquad\qquad \ \, \Biggl(\int_{D^{k-1}} h(x_1,\dots,x_{k-1}, u(x_1),\dots,u(x_{k-1}))\,d\hat{x}^k\Biggr)d\mu_n(u) \\
&\leq 2\int_{L^p(D)}\left(\int_{D}\intavg_{B_r(0)}|u(x_k+z_k)-u(x_k)|^p\,dz_kdx_k\right)^{1/p}\left(\int_D |u(x_k)|^p\,dx_k\right)^{(p-1)/p} \\
&\qquad\qquad\quad C\big(1+\|u\|_{L^p}^{p(k-1)}\big)\,d\mu_n(u) \\
&\leq 2C\omega_r^p\big(\nu^2_n\big)^{1/p}M^{p-1}(1+M^{p(k-1)})
\end{align*}
where the second-last inequality follows from H\"older's inequality and the boundedness of $L_h$ (cf.~the proof of Lemma \ref{lem:lgcont}), and the last inequality follows from \eqref{eq:LpLinftybound}. By the uniform diagonal continuity of $\cm_n$, the above vanishes uniformly as $r\to0$. The rest of the proof follows as in the proof for $\nu^1$.
\end{proof}

\section{Time-dependent correlation measures}\label{app:time-dep-cm}

\begin{proof}[Proof of Lemma \ref{lem:cmtimeslice}]
By assumption, $(t,x)\mapsto \Ypair{\nu^k_{t,x}}{g}$ is measurable for all $g\in C_0(U^k)$, so by separability of $\Caratheodory^k_0(D;U)$, the map $t \mapsto \int_{D^k} \Ypair{\nu^k_{t,x}}{g(x)}\,dx$ is also measurable for any $g\in\Caratheodory^k_0(D;U)$. Let $E_0\subset \Caratheodory^k_0(D;U)$ be a countable, dense subset of the unit sphere in $\Caratheodory^k_0(D;U)$, and let
\[
\mathcal{T} := \bigcap_{g\in E_0} \Big\{\text{Lebesgue points for } t \mapsto \int_{D^k} \Ypair{\nu^k_{t,x}}{g(x)}\,dx\Big\},
\]
a set whose complement $[0,T) \setminus \mathcal{T}$ has zero Lebesgue measure. The set $E := \textrm{span}\,E_0$ is dense in $\Caratheodory^k_0(D;U)$, and every $t\in\mathcal{T}$ is still a Lebesgue point for $t \mapsto \int_{D^k} \Ypair{\nu^k_{t,x}}{g(x)}\,dx$ whenever $g\in E$.  For every $t\in\mathcal{T}$ we define the functional $\rho(t) : E \to \R$ by
\[
\rho(t)(g) := \int_{D^k}\Ypair{\nu^k_{t,x}}{g(x)}\,dx.
\]
Then $\rho(t)$ is linear: For any $g,h\in E$ and $\alpha\in\R$ we have
\begin{align*}
\rho(t)(\alpha g+h)
&= \lim_{h\to0^+}\intavg_{t-h}^{t+h}\int_{D^k}\Ypair{\nu^k_{s,x}}{\alpha g(x)+h(x)}\,dxds \\
&= \lim_{h\to0^+}\left(\alpha\intavg_{t-h}^{t+h}\int_{D^k}\Ypair{\nu^k_{s,x}}{g(x)}\,dxds + \intavg_{t-h}^{t+h}\int_{D^k}\Ypair{\nu^k_{s,x}}{h(x)}\,dxds\right) \\
&= \alpha\rho(t)(g) + \rho(t)(h).
\end{align*}
Moreover, $\rho(t)$ is continuous, as
\begin{align*}
|\rho(t)(g)| 
&\leq \lim_{h\to0^+} \intavg_{t-h}^{t+h}\int_{D^k}|\Ypair{\nu^k_{t,x}}{g(x)}|\,dxdt
\leq \lim_{h\to0^+} \intavg_{t-h}^{t+h}\int_{D^k}\|g(x)\|_{C_0}\,dxdt \\
&= \|g\|_{\Caratheodory^k_0(D;U)}.
\end{align*}
It follows that for every $t\in\mathcal{T}$ the functional $\rho(t)$ can be extended uniquely to a continuous linear functional on $\Caratheodory^k_0(D;U)$. The remaining claims in the lemma are now readily checked.
\end{proof}


\section{Proof of \Cref{theo:lxw}}
\label{app:laxwendroff}
We write
\[
\AvgEvolved{\InitialData{u}}{t}{\Dx}{\bi}=(\AvgS{\NumericalEvolution{\Dx}_t\InitialData{u}})_\bi
\]
where $\NumericalEvolution{\Dx}$ is the numerical evolution operator; see Section \ref{sec:fvm}.
\begin{proof}
	Let $\mu^{\Dx}_t$ be the approximate statistical solution, defined in \eqref{eq:statapp}. Let $k\in \N$, and let $\phi\in C_c^\infty(D^k)$ be a test function. Fix  $\bi=(\bi_1,\ldots, \bi_k)\in(\GridIndexSetD)^{k}$,  By changing the order of integration, we have
	\begin{multline*}
		\int_{\TD}\int_{L^1(\D,\Ph)}\AvgS{u}_{\bi_1}\cdots \AvgS{u}_{\bi_k}\partial_t\phi(\vec{x}_\bi,t)\;d\mu^{\Dx}_t(u) dt=\int_{\TD}\int_{L^1(\D,\Ph)}\AvgEvolved{\InitialData{u}}{t}{\Dx}{\bi_1} \cdots\AvgEvolved{\InitialData{u}}{t}{\Dx}{\bi_k}\partial_t\phi(\vec{x}_\bi,t)\;d\InitialData{\mu}(\InitialData{u}) \;dt\\
		=\int_{L^1(\D,\Ph)}\int_{\TD}\AvgEvolved{\InitialData{u}}{t}{\Dx}{\bi_1}\cdots \AvgEvolved{\InitialData{u}}{t}{\Dx}{\bi_k}\partial_t\phi(\vec{x}_\bi,t)\; dt\; d\InitialData{\mu}(\InitialData{u}),
	\end{multline*}
where
\[\vec{x}_\bi:=\begin{pmatrix}x_{\bi_1}&\cdots&x_{\bi_k}\end{pmatrix}.\]
	Since $\{\AvgS{\InitialData{u}}^{\cdot,\Dx}_{\bj}\}_{\bj\in\GridIndexSetD}$ solves \eqref{eq:semi_d} with initial data $\left\{\AvgS{\InitialData{u}}_{\bj}\right\}_{\bj\in\GridIndexSetD}$ for every $\InitialData{u}\in L^1(\D, \Ph)$, we get that 
	\begin{multline*}
	\int_{L^1(\D,\Ph)}	\int_{\TD}\AvgEvolved{\InitialData{u}}{t}{\Dx}{\bi_1}\cdots \AvgEvolved{\InitialData{u}}{t}{\Dx}{\bi_k}\partial_t\phi(\vec{x}_\bi,t)\\
		-\frac{1}{\Dx}\sum_{n=1}^k\sum_{m=1}^d\AvgEvolved{\InitialData{u}}{t}{\Dx}{\bi_1}\cdots\Bigg(F^m_{\bi_n+\hf \be_{m}}(\NumericalEvolution{\Dx}(t)(\InitialData{u}))-F^m_{\bi_n-\hf \be_{m}}(\NumericalEvolution{\Dx}(t)(\InitialData{u}))\Bigg)\cdots \AvgEvolved{\InitialData{u}}{t}{\Dx}{\bi_k}\phi(\vec{x}_\bi,t)\; dt \\
		\qquad\qquad\qquad+\phi(\vec{x}_\bi,0)\AvgEvolved{\InitialData{u}}{0}{\Dx}{\bi_1}\cdots\AvgEvolved{\InitialData{u}}{0}{\Dx}{\bi_k}\;d\InitialData{\mu}(\InitialData{u})=0.
	\end{multline*}
	%
	%
	%
	%
	Multiplying by $\Dx^{kd}$ and summing over $\GridIndexSetDK{k}$, we get
	\begin{multline*}
		\sum_{\bi\in\GridIndexSetDK{k}}\Dx^{kd}\Bigg(	\int_{L^1(\D,\Ph)}\int_{\TD}\AvgEvolved{\InitialData{u}}{t}{\Dx}{\bi_1}\cdots \AvgEvolved{\InitialData{u}}{t}{\Dx}{\bi_k}\partial_t\phi(\vec{x}_\bi,t)\\
		-\frac{1}{\Dx}\sum_{n=1}^k\sum_{m=1}^d\AvgEvolved{\InitialData{u}}{t}{\Dx}{\bi_1}\cdots\Bigg(F^m_{\bi_n+\hf \be_{m}}(\NumericalEvolution{\Dx}(t)(\InitialData{u}))-F^m_{\bi_n-\hf \be_{m}}(\NumericalEvolution{\Dx}(t)(\InitialData{u}))\Bigg)\cdots \AvgEvolved{\InitialData{u}}{t}{\Dx}{\bi_k}\phi(\vec{x}_\bi,t)\; dt \\
		\qquad\qquad\qquad+\phi(\vec{x}_\bi,0)\AvgEvolved{\InitialData{u}}{0}{\Dx}{\bi_1}\cdots\AvgEvolved{\InitialData{u}}{0}{\Dx}{\bi_k}\;d\InitialData{\mu}(\InitialData{u})\Bigg)=0.
	\end{multline*}
	%
	%
	%
	%
	For $1\leq n \leq k$ and $1\leq m\leq d$, summation by parts gives
	\begin{align*}
		\sum_{\bi\in\GridIndexSetDK{k}}\Dx^{dk}\int_{\TD}\int_{L^1(\D,\Ph)}\AvgS{{u}}_{\bi_1}\cdots\Bigg(F^m_{\bi_n+\hf \be_{m}}(u))-F^m_{\bi_n-\hf \be_{m}}(u))\Bigg)\cdots\AvgS{{u}}_{\bi_k} \phi(\vec{x}_\bi,t)\;dt\;d {\mu_t^{\Dx}}({u})\\
		=	\sum_{\bi\in\GridIndexSetDK{k}}\Dx^{dk}\int_{\TD}\int_{L^1(\D,\Ph)}\AvgS{{u}}_{\bi_1}\cdots F^m_{\bi_n+\hf \be_{m}}(u)\cdots \AvgS{{u}}_{\bi_k}\Bigg(\phi(\vec{x}_\bi,t)-\phi(\vec{x}_{\bi+\be_{n,m}},t)\Bigg)\;dt\;d {\mu_t^{\Dx}}({u})
	\end{align*}
	%
	We furthermore have that

	\begin{align*}
	\sum_{\bi\in\GridIndexSetDK{k}}\Dx^{dk}\int_{\TD}\int_{L^1(\D,\Ph)}\AvgS{{u}}_{\bi_1}\cdots F^m_{\bi_n+\hf \be_{m}}(u)\cdots \AvgS{{u}}_{\bi_k}\Bigg(\frac{\phi(\vec{x}_\bi,t)-\phi(\vec{x}_{\bi+\be_{n,m}},t)}{\Dx}\Bigg)\;d \mu^{\Dx}_t(u)\;dt\\
	 =\sum_{\bi\in\GridIndexSetDK{k}}\Dx^{dk}\int_{\TD}\int_{L^1(\D,\Ph)}\AvgS{{u}}_{\bi_1}\cdots f^m(\AvgS{{u}}_{\bi_n})\cdots \AvgS{{u}}_{\bi_k}\Bigg(\frac{\phi(\vec{x}_\bi,t)-\phi(\vec{x}_{\bi+\be_{n,m}},t)}{\Dx}\Bigg)\;d^{\Dx}_t{\mu}({u})\;dt\\
		-\sum_{\bi\in\GridIndexSetDK{k}}\Dx^{dk}\int_{\TD}\int_{L^1(\D,\Ph)}\AvgS{{u}}_{\bi_1}\cdots \Bigg(f^m(\AvgS{{u}}_{\bi_n})-F^m_{\bi_n+\hf \be_{m}}(u)\Bigg)\cdots \AvgS{{u}}_{\bi_k}\Bigg(\frac{\phi(\vec{x}_\bi,t)-\phi(\vec{x}_{\bi+\be_{n,m}},t)}{\Dx}\Bigg)\;d {\mu^{\Dx}_t}({u})\;dt
	\end{align*}
	%
	%
	%
	%
	and by the Lipschitz continuity \eqref{eq:fvm_lipschitz}, we have
	\begin{align*}
		\sum_{\bi\in\GridIndexSetDK{k}}\Dx^{dk}\int_{\TD}\int_{L^1(\D,\Ph)}\AvgS{{u}}_{\bi_1}\cdots \Bigg(f^m(\AvgS{{u}}_{\bi_n})-F^m_{\bi_n+\hf \be_{m}}(u)\Bigg)\cdots \AvgS{{u}}_{\bi_k}\Bigg(\frac{\phi(\vec{x}_\bi,t)-\phi(\vec{x}_{\bi+\be_{n,m}},t)}{\Dx}\Bigg)\;d {\mu_t^{\Dx}}({u})\;dt\\
		\leq
		\sum_{\bi\in\GridIndexSetDK{k}}\Dx^{dk}\int_{\TD}\int_{L^1(\D,\Ph)}\Bigg|\AvgS{{u}}_{\bi_1}\cdots \Bigg(f^m(\AvgS{{u}}_{\bi_n})-F^m_{\bi_n+\hf \be_{m}}(u)\Bigg)\cdots \AvgS{{u}}_{\bi_k}\Bigg(\frac{\phi(\vec{x}_\bi,t)-\phi(\vec{x}_{\bi+\be_{n,m}},t)}{\Dx}\Bigg)\Bigg|\;d {\mu_t^{\Dx}}({u})\;dt\\
		\leq C 
		\sum_{\bi\in\GridIndexSetDK{k}}\Dx^{dk}\sum_{q=-p+1}^{p}\int_{\TD}\int_{L^1(\D,\Ph)}\Big|\AvgS{{u}}_{\bi_1}\Big|\cdots \Big|\AvgS{{u}}_{\bi_n+q\be_m}-\AvgS{{u}}_{\bi_n}\Big|\cdots \Big|\AvgS{{u}}_{\bi_k}\Big|\Big|\frac{\phi(\vec{x}_\bi,t)-\phi(\vec{x}_{\bi+\be_{n,m}},t)}{\Dx}\Big|\;d {\mu_t^{\Dx}}({u})\;dt\\
		\leq C 
		\sum_{\bi\in\GridIndexSetDK{k}}\Dx^{dk}\int_{\TD}\int_{L^1(\D,\Ph)}\Big|\AvgS{{u}}_{\bi_1}\Big|\cdots \Big|\AvgS{{u}}_{\bi_n}-\AvgS{{u}}_{\bi_n-\be_m}\Big|\cdots \Big|\AvgS{{u}}_{\bi_k}\Big|\Big|\frac{\phi(\vec{x}_\bi,t)-\phi(\vec{x}_{\bi-\be_{n,m}},t)}{\Dx}\Big|\;d {\mu_t^{\Dx}}({u})\;dt
	\end{align*}
	%
	%
	%
	%
	By H\"older's inequality we get
	\begin{align*}
	\sum_{\bi\in\GridIndexSetDK{k}}\Dx^{dk}\int_{\TD}\int_{L^1(\D,\Ph)}\Big|\AvgS{{u}}_{\bi_1}\Big|\cdots \Big|\AvgS{{u}}_{\bi_n}-\AvgS{{u}}_{\bi_n-\be_m}\Big|\cdots \Big|\AvgS{{u}}_{\bi_k}\Big|\Big|\frac{\phi(\vec{x}_\bi,t)-\phi(\vec{x}_{\bi-\be_{n,m}},t)}{\Dx}\Big|\;d {\mu_t^{\Dx}}({u})\;dt\\
		\leq C\Bigg(\Dx^{kd}\int_{\TD}\int_{L^1(\D,\Ph)}\sum_{\bi\in\GridIndexSetDK{k}}\Big|\AvgS{{u}}_{\bi_n}-\AvgS{{u}}_{\bi_n-\be_m}\Big|^s\;d{\mu^{\Dx}_t}({u})\;dt\Bigg)^{1/s}\\
		\cdot\Bigg(\Dx^{kd}\sum_{\bi\in\GridIndexSetDK{k}}\int_{\TD}\int_{L^1(\D,\Ph)}\Big|\AvgS{{u}}_{\bi_1}\Big|^{\frac{s}{s-1}}\cdots \Big|\AvgS{{u}}_{\bi_k}\Big|^{\frac{s}{s-1}}\Big|\frac{\phi(\vec{x}_\bi,t)-\phi(\vec{x}_{\bi-\be_{n,m}},t)}{\Dx}\Big|^{\frac{s}{s-1}}\;d{\mu^{\Dx}_t}({u})\;dt\Bigg)^{\frac{s-1}{s}}.
	\end{align*}
	%
	%
	%
	%
	By the weak BV assumption \eqref{eq:lxweakbv}, we get
	\begin{align*}
	\lim_{\Dx\to 0}\Dx^{kd}\int_{\TD}\int_{L^1(\D,\Ph)}\sum_{\bi\in\GridIndexSetDK{k}}\Big|\AvgS{{u}}_{\bi_n}-\AvgS{{u}}_{\bi_n-\be_m}\Big|^s\;d{\mu^{\Dx}_t}({u})\;dt\\
	=\lim_{\Dx\to 0}\Dx^{d}\int_{\TD}\int_{L^1(\D,\Ph)}\sum_{\bi\in\GridIndexSetD}\Big|\AvgS{{u}}_{\bi_n}-\AvgS{{u}}_{\bi_n-\be_m}\Big|^s\;d{\mu^{\Dx}_t}({u})\;dt=0.
	\end{align*}
	%
	%
	%
	%
	We  furthermore note that since $\phi\in C_c^\infty$, we have
	\[\Dx^{kd}\sum_{\bi\in\GridIndexSetDK{k}}\int_{\TD}\int_{L^1(\D,\Ph)}\Big|\AvgS{{u}}_{\bi_1}\Big|^{\frac{s}{s-1}}\cdots \Big|\AvgS{{u}}_{\bi_k}\Big|^{\frac{s}{s-1}}\Big|\frac{\phi(\vec{x}_\bi,t)-\phi(\vec{x}_{\bi-\be_{n,m}},t)}{\Dx}\Big|^{\frac{s}{s-1}}\;d{\mu^{\Dx}_t}({u})\;dt\leq C.\]

	%
	%
	%
	%
	%
	Hence we get 
	\begin{multline*}
		0=\lim_{\Dx}\sum_{\bi\in\GridIndexSetDK{k}}\Dx^{kd}\Bigg(	\int_{L^1(\D,\Ph)}\int_{\TD}\AvgEvolved{\InitialData{u}}{t}{\Dx}{\bi_1}\cdots \AvgEvolved{\InitialData{u}}{t}{\Dx}{\bi_k}\partial_t\phi(\vec{x}_\bi,t)\\
		-\frac{1}{\Dx}\sum_{n=1}^k\sum_{m=1}^d\AvgEvolved{\InitialData{u}}{t}{\Dx}{\bi_1}\cdots\Bigg(F^m_{\bi_n+\hf \be_{m}}(\NumericalEvolution{\Dx}(t)(\InitialData{u}))-F^m_{\bi_n-\hf \be_{m}}(\NumericalEvolution{\Dx}(t)(\InitialData{u}))\Bigg)\cdots \AvgEvolved{\InitialData{u}}{t}{\Dx}{\bi_k}\phi(\vec{x}_\bi,t)\; dt \\
		\qquad\qquad\qquad+\phi(\vec{x}_\bi,0)\AvgEvolved{\InitialData{u}}{0}{\Dx}{\bi_1}\cdots\AvgEvolved{\InitialData{u}}{0}{\Dx}{\bi_k}\;d\InitialData{\mu}(\InitialData{u})\Bigg)\\
		=\int_{\TD}\int_{L^1(\D, \Ph)}\int_{(\R^d)^k}u(x_1)\cdots u(x_k)\partial_t \phi(\vec{x},t)\\
		+\sum_{n=1}^k\sum_{m=1}^du(x_1)\cdots f^m(u(x_n))\cdots u(x_k)\partial_{nm}\phi(\vec{x},t)\;d\vec{x}\;d\mu^t(u)\;dt \\
		+\int_{L^1(\D, \Ph)}\int_{(\R^d)^k}\InitialData{u}(x_1)\cdots\InitialData{u}(x_k)\phi(\vec{x},0)\;d\vec{x}\;d\InitialData{\mu}(\InitialData{u}),
	\end{multline*}
	which completes the proof.
\end{proof}

\section{Proof of \eqref{eq:wcauchy1}}\label{app:wcauchyproof}
\begin{proposition}
Let $D\subset\R^d$ be bounded and let $\mu,\tilde{\mu}\in\Prob(L^p(D;U))$ both have bounded $p$th moment. Then
\begin{equation}
    \label{eq:wassdist}
\left(\int_{D^k} W_p\big(\nu^k_x,\tilde{\nu}^k_x\big)^p\,dx\right)^{1/p} \leq C W_p(\mu,\tilde{\mu})
\end{equation}
for some $C=C(p,k,|D|)$, where $(\nu^k)_{k\in\N}, (\tilde{\nu}^k)_{k\in\N}$ are the correlation measures corresponding to $\mu,\tilde{\mu}$.
\end{proposition}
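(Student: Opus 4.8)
The strategy is to lift an optimal transport plan between $\mu$ and $\tilde\mu$ on $L^p(D;U)$ to a whole family of transport plans between the correlation marginals $\nu^k_x$ and $\tilde\nu^k_x$ on $U^k$, indexed by $x\in D^k$, and then to bound the total transport cost of this family by the cost of the original plan. Concretely, I would pick $\pi\in\Pi(\mu,\tilde\mu)$ attaining the infimum in \eqref{eq:wasserdef}; this is possible since $L^p(D;U)$ is a separable Banach space (hence Polish) and the cost $(u,v)\mapsto\|u-v\|_{L^p}^p$ is lower semicontinuous. For $x=(x_1,\dots,x_k)\in D^k$ write $e_x\colon L^p(D;U)\to U^k$, $e_x(u)=(u(x_1),\dots,u(x_k))$, for the ($x$-a.e.\ defined) evaluation map and $T_x:=(e_x,e_x)\colon L^p(D;U)^2\to(U^k)^2$, and set $\pi^k_x:=(T_x)_\#\pi$. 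The heart of the argument is the claim that $\pi^k_x\in\Pi(\nu^k_x,\tilde\nu^k_x)$ for a.e.\ $x\in D^k$.

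To prove this claim I would first record the fibrewise disintegration identity $\nu^k_x=(e_x)_\#\mu$ for a.e.\ $x\in D^k$ (and $\tilde\nu^k_x=(e_x)_\#\tilde\mu$ likewise), which is essentially the content of Theorem~\ref{thm:corrmeasduality}. Indeed, for $\psi\in C_0(U^k)$ and $\phi\in L^1(D^k)$ the function $g(x,\xi)=\phi(x)\psi(\xi)$ lies in $\Caratheodory^k_0(D;U)$, and \eqref{eq:corrmeasduality} reads
\[
\int_{D^k}\phi(x)\,\Ypair{\nu^k_x}{\psi}\,dx=\int_{D^k}\phi(x)\,\Ypair{(e_x)_\#\mu}{\psi}\,dx .
\]
Since $\phi$ is arbitrary, $\Ypair{\nu^k_x}{\psi}=\Ypair{(e_x)_\#\mu}{\psi}$ for a.e.\ $x$; ranging $\psi$ over a countable dense subset of $C_0(U^k)$ produces a single Lebesgue-null set off which $\nu^k_x=(e_x)_\#\mu$ as Radon measures. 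Given this, for a.e.\ $x$ the first marginal of $\pi^k_x$ is $(\mathrm{pr}_1)_\#(T_x)_\#\pi=(e_x)_\#(\mathrm{pr}_1)_\#\pi=(e_x)_\#\mu=\nu^k_x$, and symmetrically the second marginal is $\tilde\nu^k_x$, which proves the claim.

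What remains is a routine estimate. Equipping $U^k\cong\R^{Nk}$ with the Euclidean norm, so that $|e_x(u)-e_x(v)|^p=\bigl(\sum_{i=1}^k|u(x_i)-v(x_i)|^2\bigr)^{p/2}$, and using in turn the fibrewise bound $W_p(\nu^k_x,\tilde\nu^k_x)^p\le\int_{(U^k)^2}|\xi-\zeta|^p\,d\pi^k_x$, the definition $\pi^k_x=(T_x)_\#\pi$, Tonelli's theorem, the elementary inequality $\bigl(\sum_{i=1}^k a_i\bigr)^{p/2}\le k^{p/2}\sum_{i=1}^k a_i^{p/2}$ (valid for $a_i\ge0$), and finally $\int_{D^k}|u(x_i)-v(x_i)|^p\,dx=|D|^{k-1}\|u-v\|_{L^p(D;U)}^p$ (this is where boundedness of $D$ enters), one obtains
\[
\int_{D^k}W_p(\nu^k_x,\tilde\nu^k_x)^p\,dx\le k^{p/2+1}|D|^{k-1}\int_{L^p\times L^p}\|u-v\|_{L^p}^p\,d\pi=k^{p/2+1}|D|^{k-1}\,W_p(\mu,\tilde\mu)^p ,
\]
the last equality by optimality of $\pi$ and finiteness of the $p$th moments of $\mu,\tilde\mu$. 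Taking $p$th roots yields \eqref{eq:wassdist} with $C=\bigl(k^{p/2+1}|D|^{k-1}\bigr)^{1/p}$. The measurability of $x\mapsto W_p(\nu^k_x,\tilde\nu^k_x)$ needed for these integrals to make sense is standard (from weak${}^*$ measurability of $x\mapsto\nu^k_x,\tilde\nu^k_x$ together with lower semicontinuity of $W_p$ under weak convergence). The only genuinely delicate point is the simultaneous-in-$x$ disintegration identity, where separability of $C_0(U^k)$ is used to pass from "for each $\psi$, a.e.\ $x$" to "a.e.\ $x$, for all $\psi$"; everything after that is bookkeeping.
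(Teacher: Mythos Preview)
Your proof is correct and is essentially the same as the paper's: your fibrewise plan $\pi^k_x=(T_x)_\#\pi$ is precisely the $k$th correlation marginal $\sigma^k_x$ of the optimal plan $\pi$ (viewed as a probability measure on $L^p(D;U^2)$), and the marginal computation and the reduction via symmetry to the $k=1$ case proceed identically. The only cosmetic difference is that the paper works with the $\ell^p$-type product norm on $U^k$ (so $|\xi-\zeta|^p=\sum_{l}|\xi_l-\zeta_l|^p$ holds as an equality) and obtains $C=(k|D|^{k-1})^{1/p}$, whereas you use the Euclidean norm and pick up an extra $k^{1/2}$.
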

\begin{proof}
Let $\pi\in\Prob(L^p(D;U)^2)$ be an optimal transport plan for $W_p(\mu,\tilde{\mu})$, and let $(\sigma^k)_{k\in\N}\in\Corrmeas^p(D;U^2)$ be its corresponding correlation measure. We claim that the marginals of $\sigma^k_x$ are $\nu^k_x$ and $\tilde{\nu}^k_x$, respectively, for a.e.~$x\in D^k$. Indeed, if $g\in\Caratheodory^k_0(D;U)$ then, denoting $\bar{g}(x,\xi,\zeta):=g(x,\xi)$,
\begin{align*}
\Ypair{\sigma^k}{\bar{g}} &= \Ypair{\pi}{L_{\bar{g}}} 
= \int_{L^p(D;U)^2} L_{\bar{g}}(u,v)\,d\pi(u,v) = \int_{L^p(D;U)^2} L_g(u)\,d\pi(u,v) \\
&= \int_{L^p(D;U)} L_g(u)\,d\mu(u) = \Ypair{\nu^k}{g}.
\end{align*}
A similar computation holds for the second marginal. Since $g$ is arbitrary, the claim follows. We can therefore estimate
\begin{align*}
\int_{D^k} W_p\big(\nu^k_x,\tilde{\nu}^k_x\big)^p\,dx &\leq \int_{D^k} \int_{U^{2k}}|\xi-\zeta|^p\,d\sigma_x^k(\xi,\zeta)\,dx \\
&= \sum_{l=1}^k\int_{D^k} \int_{U^{2k}}|\xi_l-\zeta_l|^p\,d\sigma_x^k(\xi,\zeta)\,dx \\
&= k\int_{D^k} \int_{U^2}|\xi_1-\zeta_1|^p\,d\sigma_{x_1}^1(\xi,\zeta)\,dx \\
&= k|D|^{k-1}\int_{D} \int_{U^2}|\xi-\zeta|^p\,d\sigma_{x}^1(\xi,\zeta)\,dx \\
&= k|D|^{k-1}\int_{L^p(D;U)^2}\int_{D} |u(x)-v(x)|^p\,dx\,d\pi(u,v) = W_p(\mu,\tilde{\mu})^p.
\end{align*}
\end{proof}

\section{Computing the Wasserstein distance for sums of Diracs}\label{app:wasserstein}
In \Cref{sec:5}, we computed the Wasserstein distance between different numerical solutions. The numerical solutions are always sums of Diracs, which simplifies the computations greatly. 

For two $k$-th correlation marginals $\nu^{k,1}$ and $\nu^{k,2}$, we are interested in the distance

\[\|W_1(\nu^{k,1}_{\cdot, T}, \nu^{k,2}_{\cdot,T})\|_{L^1(D^k)}.\]

\subsection{Computing the Wasserstein distance between the first correlation marginals}
To compute the first correlation marginals, we use the function\\ \verb|scipy.stats.wasserstein_distance| in the \verb|scipy| module for Python~\cite{scipy} to compute the Wasserstein distance. The function computes the Wasserstein distance by going through the CDF, consult the \verb|scipy| documentation for more information.

We approximate the spatial integral as a sum over all the volume averages. 

\subsection{Computing the Wasserstein distance between the second correlation marginals}
For the second correlation marginals, we use the function \verb|ot.emd| in the \verb|POT| module for Python~\cite{POT}. This function uses the Hungarian algorithm~\cite{hungarian} to compute the Wasserstein distance between sums of Diracs.

The spatial integral is approximated by taking 10 spatial points in each direction, in other words, we use $10\; 000$ evaluation of the Wasserstein distance.


\bibliography{biblo}{}

\begin{thebibliography}{10}

\bibitem{AliBor}
C.~D. Aliprantis and K.~Border.
\newblock {\em {Infinite Dimensional Analysis}}.
\newblock Springer-Verlag Berlin Heidelberg, 3rd edition, 2006.

\bibitem{ambrosio_gradient_flows}
L.~Ambrosio, N.~Gigli, and G.~Savar{\'e}.
\newblock {\em Gradient flows in metric spaces and in the space of probability
  measures}.
\newblock Lectures in Mathematics ETH Z{\"u}rich. Birkh{\"a}user Verlag, Basel,
  second edition, 2008.

\bibitem{ball}
J.~M. Ball.
\newblock A version of the fundamental theorem for {Y}oung measures.
\newblock In {\em P{DE}s and continuum models of phase transitions ({N}ice,
  1988)}, volume 344 of {\em Lecture Notes in Phys.}, pages 207--215. Springer,
  Berlin, 1989.

\bibitem{BB1}
S.~Bianchini and A.~Bressan.
\newblock Vanishing viscosity solutions of nonlinear hyperbolic systems.
\newblock {\em Ann. of Math}, 161(1):223--342, 2005.

\bibitem{Bil08}
P.~Billingsley.
\newblock {\em {Convergence of Probability Measures}}.
\newblock John Wiley \& Sons, Inc., 2008.

\bibitem{BDS1}
Y.~Brenier, C.~{De Lellis}, and L.~Sz{\'e}kelyhidi, Jr.
\newblock Weak-strong uniqueness for measure-valued solutions.
\newblock {\em Comm. Math. Phys.}, 305(2):351--361, 2011.

\bibitem{Bress1}
A.~Bressan.
\newblock Hyperbolic systems of conservation laws: The one dimensional {Cauchy}
  problem.
\newblock {\em Oxford University Press}, 200, 2000.

\bibitem{GWZ1}
J.~Brezina and E.~Feireisl.
\newblock {Measure-valued solutions to the complete Euler system.}
\newblock {\em J. Math. Soc. Japan,.}, 70(4):1227--1245, 2018.

\bibitem{CDK1}
E.~Chiodaroli, C.~{De Lellis}, and O.~Kreml.
\newblock Global ill-posedness of the isentropic system of gas dynamics.
\newblock {\em Comm. Pure Appl. Math.}, 68(7):1157--1190, 2015.

\bibitem{Dafermos}
C.~M. Dafermos.
\newblock {\em Hyperbolic conservation laws in continuum physics}, volume 325
  of {\em Grundlehren der Mathematischen Wissenschaften [Fundamental Principles
  of Mathematical Sciences]}.
\newblock Springer-Verlag, Berlin, fourth edition, 2016.

\bibitem{CDL1}
C.~{De Lellis} and L.~Sz{\'e}kelyhidi, Jr.
\newblock The {E}uler equations as a differential inclusion.
\newblock {\em Ann. of Math. (2)}, 170(3):1417--1436, 2009.

\bibitem{diperna}
R.~J. DiPerna.
\newblock Measure-valued solutions to conservation laws.
\newblock {\em Arch. Rational Mech. Anal.}, 88(3):223--270, 1985.

\bibitem{diperna-2}
R.~J. DiPerna.
\newblock Measure-valued solutions to conservation laws.
\newblock {\em Arch. Rational Mech. Anal.}, 88(3):223--270, 1985.

\bibitem{USFthesis}
U.~S. Fjordholm.
\newblock {\em High-order accurate entropy stable numerical schemes for
  hyperbolic conservation laws.}
\newblock PhD thesis, ETH Z{\"u}rich, 2013.
\newblock Dis. no 21025.

\bibitem{fkmt}
U.~S. Fjordholm, R.~K\"appeli, S.~Mishra, and E.~Tadmor.
\newblock Construction of approximate entropy measure-valued solutions for
  hyperbolic systems of conservation laws.
\newblock {\em Found. Comput. Math.}, 17(3):763--827, 2017.

\bibitem{FLM17}
U.~S. Fjordholm, S.~Lanthaler, and S.~Mishra.
\newblock Statistical solutions of hyperbolic conservation laws: foundations.
\newblock {\em Arch. Ration. Mech. Anal.}, 226(2):809--849, 2017.

\bibitem{FLyeM1}
U.~S. Fjordholm, K.~O. Lye, and S.~Mishra.
\newblock Numerical approximation of statistical solutions of scalar
  conservation laws.
\newblock {\em SIAM J. Numer. Anal.}, 56(5):2989--3009, 2018.

\bibitem{FMT_TeCNO}
U.~S. Fjordholm, S.~Mishra, and E.~Tadmor.
\newblock Arbitrarily high-order accurate entropy stable essentially
  nonoscillatory schemes for systems of conservation laws.
\newblock {\em SIAM J. Numer. Anal.}, 50(2):544--573, 2012.

\bibitem{FMTacta}
U.~S. Fjordholm, S.~Mishra, and E.~Tadmor.
\newblock On the computation of measure-valued solutions.
\newblock {\em Acta Numerica}, 25:567--679, 2016.

\bibitem{POT}
R.~Flamary and N.~Courty.
\newblock {POT Python Optimal Transport library}, 2017.

\bibitem{FMRT1}
C.~Foia\c{s}, O.~Manley, R.~Rosa, and R.~Temam.
\newblock {\em Navier--Stokes Equations and Turbulence}.
\newblock Cambridge University Press, 2001.

\bibitem{Folland}
G.~B. Folland.
\newblock {\em {Real analysis}}.
\newblock John Wiley \& Sons Inc., 1999.

\bibitem{Fournier:1982:CRS:358523.358553}
A.~Fournier, D.~Fussell, and L.~Carpenter.
\newblock Computer rendering of stochastic models.
\newblock {\em Commun. ACM}, 25(6):371--384, June 1982.

\bibitem{fris1}
U.~Frisch.
\newblock {\em Turbulence.}
\newblock Cambridge University Press, 1995.

\bibitem{Glimm1}
J.~Glimm.
\newblock Solutions in the large for nonlinear hyperbolic systems of equations.
\newblock {\em Comm. Pure Appl. Math.}, 18(4):697--715, 1965.

\bibitem{GR}
E.~Godlewski and P.~Raviart.
\newblock {\em Hyperbolic systems of conservation laws}.
\newblock Math{\'e}matiques \& applications. Ellipses, 1991.

\bibitem{GST}
S.~Gottlieb, C.-W. Shu, and E.~Tadmor.
\newblock Strong stability-preserving high-order time discretization methods.
\newblock {\em SIAM Rev.}, 43(1):89--112, 2001.

\bibitem{HEST1}
J.~Hesthaven.
\newblock {\em Numerical methods for conservation laws: From analysis to
  algorithms}.
\newblock SIAM, 2018.

\bibitem{HR}
H.~Holden and N.~H. Risebro.
\newblock {\em Front tracking for hyperbolic conservation laws}, volume 152 of
  {\em Applied Mathematical Sciences}.
\newblock Springer, Heidelberg, second edition, 2015.

\bibitem{scipy}
E.~Jones, T.~Oliphant, P.~Peterson, et~al.
\newblock {SciPy}: Open source scientific tools for {Python}, 2001--.
\newblock [Online; accessed 1.04.2019].

\bibitem{Kle}
A.~Klenke.
\newblock {\em {Probability Theory. A Comprehensive Course}}.
\newblock Springer London, 2nd edition, 2014.

\bibitem{Kruz1}
S.~N. Kruzkhov.
\newblock First order quasilinear equations in several independent variables.
\newblock {\em USSR Math. Sbornik.}, 10(2):217--243, 1970.

\bibitem{hungarian}
H.~W. Kuhn.
\newblock The hungarian method for the assignment problem.
\newblock {\em Naval Research Logistics Quarterly}, 2(1-2):83--97, mar 1955.

\bibitem{leveque_green}
R.~J. LeVeque.
\newblock {\em Numerical methods for conservation laws}.
\newblock Lectures in Mathematics ETH Z{\"u}rich. Birkh{\"a}user Verlag, Basel,
  second edition, 1992.

\bibitem{Levy1965}
P.~L{\'e}vy.
\newblock {\em Processus stochastiques et mouvement brownien}.
\newblock Les Grands Classiques Gauthier-Villars. [Gauthier-Villars Great
  Classics]. {\'E}ditions Jacques Gabay, Sceaux, 1992.
\newblock Followed by a note by M. Lo{\`e}ve, Reprint of the second (1965)
  edition.

\bibitem{alsvinn}
K.~O. Lye.
\newblock {Alsvinn: The fast finite volume simulator with UQ support}, 2018 --.
\newblock [Online; accessed 1.04.2019].

\bibitem{LMR1}
K.~O. Lye, S.~Mishra, and D.~Ray.
\newblock Deep learning observables in computational fluid dynamics.
\newblock Preprint, available as ArXiv:1903.03040v1.

\bibitem{mandelbrot_fractional}
B.~B. Mandelbrot and J.~W. {Van Ness}.
\newblock Fractional {B}rownian motions, fractional noises and applications.
\newblock {\em SIAM Rev.}, 10:422--437, 1968.

\bibitem{mlmc_hyperbolic}
S.~Mishra and C.~Schwab.
\newblock Sparse tensor multi-level {M}onte {C}arlo finite volume methods for
  hyperbolic conservation laws with random initial data.
\newblock {\em Math. Comp.}, 81(280):1979--2018, 2012.

\bibitem{mss1}
S.~Mishra, C.~Schwab, and J.~{\v S}ukys.
\newblock Multi-level {M}onte {C}arlo finite volume methods for nonlinear
  systems of conservation laws in multi-dimensions.
\newblock {\em J. Comput. Phys.}, 231(8):3365--3388, 2012.

\bibitem{DST1}
D.~M. A.~S. S.~Demoulini and A.~E. Tzavaras.
\newblock Weak-strong uniqueness of dissipative measure-valued solutions for
  polyconvex elastodynamics.
\newblock {\em Arch. Rat. Mech. Anal.}, 205(3):927--961, 2012.

\bibitem{Sch1}
S.~Schochet.
\newblock Examples of measure-valued solutions.
\newblock {\em Comm. Par. Diff. Eqns.}, 14(5):545--575, 1989.

\bibitem{Sudakov1957}
V.~N. Sudakov.
\newblock Criteria of compactness in function spaces.
\newblock {\em Uspehi Mat. Nauk (N.S.)}, 12(3(75)):221--224, 1957.

\bibitem{vanderVaart1996}
A.~W. van~der Vaart and J.~A. Wellner.
\newblock {\em Weak Convergence and Empirical Processes}.
\newblock Springer New York, 1996.

\bibitem{Vil1}
C.~Villani.
\newblock {\em {Topics in Optimal Transportation}}.
\newblock {Graduate Studies in Mathematics, Vol. 58}. American Mathematical
  Society, 2003.

\bibitem{Voss1991}
R.~F. Voss.
\newblock Random fractal forgeries.
\newblock In R.~A. Earnshaw, editor, {\em Fundamental Algorithms for Computer
  Graphics: NATO Advanced Study Institute directed by J.E. Bresenham, R.A.
  Earnshaw, M.L.V. Pitteway}, pages 805--835. Springer Berlin Heidelberg,
  Berlin, Heidelberg, 1991.

\bibitem{Wiedemann_ws}
E.~Wiedemann.
\newblock Weak-strong uniqueness in fluid dynamics.
\newblock In {\em Partial differential equations in fluid dynamics, London
  Math. Soc. Lecture notes 452.}, pages 289--326. Cambridge university press.,
  2018.

\bibitem{Young}
L.~Young.
\newblock {\em Lectures on the Calculus of Variations and Optimal Control
  Theory.}
\newblock W. B . Saunders, Philadelphia, PA, 1969.

\end{thebibliography}
\bibliographystyle{abbrv}

\end{document}